\theoremstyle{plain}
\newtheorem{thm}{Theorem}[section]
\newtheorem{cor}[thm]{Corollary}
\newtheorem{lem}[thm]{Lemma}
\newtheorem{prop}[thm]{Proposition}
\def\@rst #1 #2other{#1}
\newcommand\MR[1]{\relax\ifhmode\unskip\spacefactor3000 \space\fi
  \MRhref{\expandafter\@rst #1 other}{#1}}
\newcommand{\MRhref}[2]{\href{http://www.ams.org/mathscinet-getitem?mr=#1}{MR#2}}
\theoremstyle{definition}
\newtheorem{defn}[thm]{Definition}
\newtheorem{notation-no-italics}[thm]{Notation}
\newtheorem{remark}[thm]{Remark}
\numberwithin{equation}{section}
\newcommand{\dsb}{\begin{adjustwidth}{2.5em}{0pt}
\begin{footnotesize}}
\newcommand{\dse}{\end{footnotesize}
\end{adjustwidth}}
\newcommand{\ssb}{\begin{adjustwidth}{2.5em}{0pt}}
\newcommand{\sse}{\end{adjustwidth}}
\newcommand{\aryb}{\begin{eqnarray*}}
\newcommand{\arye}{\end{eqnarray*}}
\def\alb#1\ale{\begin{align*}#1\end{align*}}
\def\allb#1\alle{\begin{align}#1\end{align}}
\newcommand{\eqb}{\begin{equation}}
\newcommand{\eqe}{\end{equation}}
\newcommand{\eqbn}{\begin{equation*}}
\newcommand{\eqen}{\end{equation*}}
\newcommand{\BB}{\mathbbm}
\newcommand{\ol}{\overline}
\newcommand{\op}{\operatorname}
\newcommand{\frk}{\mathfrak}
\newcommand{\eqD}{\overset{d}{=}}
\newcommand{\ep}{\varepsilon}
\newcommand{\rta}{\rightarrow}
\newcommand{\wt}{\widetilde}
\newcommand{\mcl}{\mathcal}
\newcommand{\bdy}{\partial}
\let\originalleft\left
\let\originalright\right
\renewcommand{\left}{\mathopen{}\mathclose\bgroup\originalleft}
\renewcommand{\right}{\aftergroup\egroup\originalright}
\title{KPZ formulas for the Liouville quantum gravity metric}
\date{ }
\author{
\begin{tabular}{c} Ewain Gwynne\\[-5pt]\small Cambridge \end{tabular}  
\begin{tabular}{c} Joshua Pfeffer\\[-5pt]\small MIT \end{tabular} 
}
\begin{document}

\maketitle


\begin{abstract}
Let $\gamma\in (0,2)$, let $h$ be the planar Gaussian free field, and let $D_h$ be the associated $\gamma$-Liouville quantum gravity (LQG) metric. 
We prove that for any random Borel set $X \subset \mathbb{C}$ which is independent from $h$, the Hausdorff dimensions of $X$ with respect to the Euclidean metric and with respect to the $\gamma$-LQG metric $D_h$ are a.s.\ related by the (geometric) KPZ formula.     
As a corollary, we deduce that the Hausdorff dimension of the continuum $\gamma$-LQG metric is equal to the exponent $d_\gamma > 2$ studied by Ding and Gwynne (2018), which describes distances in discrete approximations of $\gamma$-LQG such as random planar maps.

We also derive ``worst-case" bounds relating the Euclidean and $\gamma$-LQG dimensions of $X$ when $X$ and $h$ are not necessarily independent, which answers a question posed by Aru (2015). 
Using these bounds, we obtain an upper bound for the Euclidean Hausdorff dimension of a $\gamma$-LQG geodesic which equals $1.312\dots$ when $\gamma = \sqrt{8/3}$; and an upper bound of $1.9428\dots$ for the Euclidean Hausdorff dimension of a connected component of the boundary of a $\sqrt{8/3}$-LQG metric ball. 

We use the axiomatic definition of the $\gamma$-LQG metric, so the paper can be understood by readers with minimal background knowledge beyond a basic level of familiarity with the Gaussian free field. 
\end{abstract}

\tableofcontents

\section{Introduction}
\label{sec-intro}

\subsection{Overview}
\label{sec-overview}

Since the 1980s, physicists and mathematicians have studied a family of random surfaces known as $\gamma$-Liouville quantum gravity (LQG) surfaces with parameter $\gamma \in (0,2)$.
Heuristically, a $\gamma$-LQG surface is the random two-dimensional Riemannian manifold parametrized by a planar domain $U\subset \BB C$ with Riemannian metric tensor
\eqb \label{eqn-lqg-def}
e^{\gamma h} (dx^2 + dy^2),
\eqe
where $h$ is the Gaussian free field (GFF) on $U$, or some minor variant thereof, and $dx^2 + dy^2$ denotes the Euclidean metric tensor.  
Since the GFF is a distribution (generalized function), not a function, the formula~\eqref{eqn-lqg-def} does not make literal sense and various regularization procedures are needed to define LQG surfaces rigorously.

LQG surfaces were first introduced by Polyakov~\cite{polyakov-qg1,polyakov-qg2} as a canonical model of random two-dimensional Riemannian manifolds. Such surfaces arise as the scaling limits of random planar maps, one of the most natural discrete random surface models.
The case when $\gamma=\sqrt{8/3}$ (``pure gravity") corresponds to uniform random planar maps. Other values of $\gamma$ (``gravity coupled to matter") correspond to random planar maps sampled  with probability proportional to the partition function of a statistical mechanics model on the map, such as the uniform spanning tree ($\gamma = \sqrt{2}$) or the Ising model ($\gamma = \sqrt{3}$).
LQG surfaces are also closely related to Liouville conformal field theory (LCFT); see~\cite{dkrv-lqg-sphere,grv-higher-genus} for rigorous works on LCFT.

An early breakthrough in the study of LQG surfaces in the physics literature --- and still one of the key tools for understanding their geometry --- is the (geometric) \emph{Knizhnik-Polyakov-Zamolodchikov (KPZ) formula}.\footnote{The original KPZ formula in~\cite{kpz-scaling} described what the primary fields of the matter field CFT become when they are coupled to quantum gravity. A rigorous proof of this formula seems to be mathematically out of reach so far. In this paper, we instead consider a weaker formulation which relates the fractal dimension of a set sampled independently from the GFF as measured with the Euclidean metric to the fractal dimension of the same set as measured by the
random distance function corresponding to~\eqref{eqn-lqg-def}. This weaker version also goes under the name KPZ formula and is sometimes called the geometric KPZ formula. The first rigorous versions of the weaker formulation appeared in~\cite{shef-kpz,rhodes-vargas-log-kpz}. 
In this paper we use the terms KPZ formula and geometric KPZ formula interchangeably and we are only concerned with the formula that relates the two notions of dimension for
random fractals.
}
This formula relates the Euclidean Hausdorff (or Minkowski) dimension of a random fractal set $X\subset U$ sampled independently from $h$ to its ``quantum dimension", i.e., its dimension with respect to the $\gamma$-LQG metric tensor~\eqref{eqn-lqg-def}. 
If the quantum dimension is normalized to lie in $[0,1]$, then the KPZ formula reads
\eqb \label{eqn-kpz}
\Delta_0  = \left(2 + \frac{\gamma^2}{2} \right) \Delta_\gamma - \frac{\gamma^2}{2} \Delta_\gamma^2 ,
\eqe
where $\Delta_0 \in [0,2]$ and $\Delta_\gamma \in [0,1]$ denote the Euclidean and quantum dimensions of $X$, respectively. 
Historically, the KPZ formula has been an important tool for computing dimensions and exponents associated with random fractals, both rigorously and non-rigorously. 
For example, it was used by Duplantier to derive non-rigorously the values of the so-called \emph{Brownian intersection exponents}.\footnote{The Brownian intersection exponents were derived earlier using a different method by Duplantier and Kwon~\cite{duplantier-kwon-brownian}.} These values were later verified mathematically by Lawler, Schramm, and Werner in \cite{lsw-bm-exponents1,lsw-bm-exponents2,lsw-bm-exponents3} using Schramm-Loewner evolution (SLE) techniques.
As another example, the paper~\cite{ghm-kpz} uses a rigorous version of the KPZ formula to compute the dimensions of several sets associated with SLE.

An important difficulty in making rigorous sense of the KPZ formula is defining what is meant by ``quantum dimension''.
This is non-trivial since \emph{a priori}~\eqref{eqn-lqg-def} is ill-defined.
The most natural approach would be to define a $\gamma$-LQG surface as a metric space, i.e., to construct a metric $D_h$ on $U$ corresponding to~\eqref{eqn-lqg-def} and define the quantum dimension of $X$ to be the Hausdorff dimension of the metric space $(X,D_h|_X)$ (if we want the quantum dimension to lie in $[0,1]$, as in~\eqref{eqn-kpz}, we can divide by the Hausdorff dimension of the whole space $(U,D_h)$).  
However, until very recently it was not known how to rigorously construct such a metric $D_h$.
Consequently, the rigorous versions of the KPZ relation in previous literature use different geometric objects that can be constructed in the LQG context to define some reasonable notion of quantum dimension.

The first rigorous versions of the KPZ formula were proven by Duplantier and Sheffield~\cite{shef-kpz} and Rhodes and Vargas~\cite{rhodes-vargas-log-kpz}.
Both of these papers defined the quantum dimension in terms of the \emph{$\gamma$-LQG area measure} $\mu_h$, a random measure on $U$ which can be defined as a limit of regularized versions of $e^{\gamma h} \,dx\,dy$, where $dx\,dy$ denotes Lebesgue measure. See~\cite{kahane,rhodes-vargas-review} for a more general theory of random measures of this type, called \emph{Gaussian multiplicative chaos}; the result of~\cite{rhodes-vargas-log-kpz} in fact applies at this level of generality. 
In~\cite{shef-kpz}, the authors define the ``quantum dimension" of $X$ as (roughly speaking) the scaling exponent for the number of dyadic squares with $\gamma$-LQG mass approximately $\ep$ which intersect $X$, and showed that this exponent is related to the Euclidean Minkowski dimension of $X$ via the KPZ formula.
The authors of~\cite{rhodes-vargas-log-kpz} prove a similar result but for Hausdorff dimension instead of Minkowski dimension (the ``quantum Hausdorff dimension" is defined using the $\mu_h$-masses of Euclidean balls). 
Many other versions of the KPZ formula have since been established using different notions of quantum dimension~\cite{shef-kpz,aru-kpz,ghs-dist-exponent,ghm-kpz,grv-kpz,gwynne-miller-char,bjrv-gmt-duality,benjamini-schramm-cascades,wedges,shef-renormalization,shef-kpz}. 

The recent works~\cite{dddf-lfpp,local-metrics,lqg-metric-estimates,gm-confluence,gm-uniqueness,gm-coord-change} rigorously constructed the Riemannian distance function associated with~\eqref{eqn-lqg-def} for each $\gamma \in (0,2)$.
This distance function is a random metric on the planar domain $U$ which induces the same topology as the Euclidean metric.\footnote{Here and throughout the paper by ``metric" we mean in the sense of ``metric space" rather than ``Riemannian metric". There will be no risk of confusion since our results and proofs do not make any direct reference to Riemannian metrics.}
We will review the definition of this metric in Section~\ref{sec-metric-def}.  
The metric for $\gamma=\sqrt{8/3}$ was previously constructed (in an entirely different manner) by Miller and Sheffield~\cite{lqg-tbm1,lqg-tbm2,lqg-tbm3}, who also proved that a certain special $\sqrt{8/3}$-LQG surface --- the so-called \emph{quantum sphere} --- is equivalent, as a metric space, to the \emph{Brownian map}~\cite{legall-uniqueness,miermont-brownian-map}. 

Hence, it is now possible to state the most natural formulation of the KPZ formula, i.e., a relation between the Hausdorff dimensions of a subset of the complex plane measured with respect to the Euclidean metric and the $\gamma$-LQG metric. This formula is the first main result of this paper (Theorem~\ref{thm-kpz-independent}).

As a corollary (Corollary~\ref{cor-lqg-dim}), we prove that the Hausdorff dimension of the continuum $\gamma$-LQG metric is given by the LQG dimension exponent $d_\gamma > 2$ from~\cite{dg-lqg-dim} (see also~\cite{dzz-heat-kernel,ghs-map-dist}). The exponent $d_\gamma$ describes several different exponents related to discrete approximations of $\gamma$-LQG. One possible definition of $d_\gamma$ is as the ball volume growth exponent for certain random planar maps in the $\gamma$-LQG universality class. More precisely, it is shown in~\cite[Theorem 1.6]{dg-lqg-dim}, building on~\cite{dzz-heat-kernel,ghs-map-dist}, that for certain infinite-volume random planar maps $(M,v)$ which are expected to converge to $\gamma$-LQG in the scaling limit (including the uniform infinite planar triangulation~\cite{angel-schramm-uipt}, infinite spanning-tree weighted maps~\cite{shef-kpz,chen-fk}, and mated-CRT maps~\cite{gms-tutte}), the limit 
\eqb
d_\gamma := \lim_{r \rta\infty} \frac{\log |\mcl B_r(v)|}{\log r} 
\eqe 
exists and depends only on $\gamma$, where $|\mcl B_r(v)|$ is the number of vertices in the $M$-graph distance ball of radius $r$ centered at $v$. 
There are several other exponents which can be described in terms of $d_\gamma$, which lead to equivalent definitions of $d_\gamma$.
For example, the exponent for the graph-distance speed of random walk on the same random planar maps discussed above is $1/d_\gamma$~\cite{gm-spec-dim,gh-displacement} and the distance exponents for Liouville first passage percolation and Liouville graph distance are $1-(\gamma/d_\gamma)(2/\gamma+\gamma/2)$ and $1/d_\gamma$, respectively~\cite{dg-lqg-dim,dzz-heat-kernel}. 

The exponent $d_{\gamma}$ was proposed in~\cite{dg-lqg-dim} as a reasonable interpretation of the notion of ``dimension of $\gamma$-LQG'' that has been heuristically studied in the physics literature, e.g., by Watabiki~\cite{watabiki-lqg}, who predicted the value of this dimension as a function of $\gamma$. Our corollary rigorously justifies this interpretation in the context of the continuum $\gamma$-LQG metric. We note that Watabiki's prediction is known to be false at least for small $\gamma$~\cite{ding-goswami-watabiki}. It is a major open problem to determine the value of $d_\gamma$ except when $\gamma=\sqrt{8/3}$, in which case it is known that $d_{\sqrt{8/3}}=4$. See~\cite{dg-lqg-dim,gp-lfpp-bounds,ang-discrete-lfpp} for upper and lower bounds on $d_\gamma$ and~\cite{dg-lqg-dim,gp-lfpp-bounds} for discussion about its possible value.   
 
Our next set of results considers the more general context in which the fractal being considered is not necessarily independent from the field.  
Recall that the KPZ formula only applies when the random fractal $X$ is sampled independently from $h$. 
However, there are many random fractals which are \emph{not} independent from $X$ which one might be interested in --- such as $\gamma$-LQG geodesics and the boundaries of $\gamma$-LQG metric balls. 
To allow us to say something about such fractals, we will also prove a ``worst-case" variant of the KPZ formula which gives bounds relating the Euclidean and quantum dimensions of $X$ for an arbitrary random Borel set $X$ (Theorem~\ref{thm-kpz-correlated}). 
This answers a question posed by Aru~\cite{aru-kpz}.
As we will explain below, our bounds are the best possible without additional assumptions on $X$. 
As applications of our bounds, we prove an upper bound for the Euclidean dimension of $\gamma$-LQG geodesics and for the outer boundary of a $\sqrt{8/3}$-LQG metric ball (Corollaries~\ref{cor-geodesic-bound} and~\ref{cor-ball-bdy}). 

Before stating our main results, we will review the axioms from~\cite{gm-uniqueness,gm-coord-change} that uniquely characterize the LQG metric.  
Because the LQG metric is defined axiomatically, our proofs do not require any outside input except some results from~\cite{ghm-kpz} and~\cite{lqg-metric-estimates}. All of these results are themselves proved using only basic properties of the GFF, and we re-state them as needed.  To understand our paper, the reader needs only to be familiar with basic properties of the GFF, as reviewed, e.g., in~\cite{shef-gff}, the introductory sections of~\cite{ss-contour,ig1,ig4}, or the notes~\cite{berestycki-lqg-notes}. 
\medskip


\noindent
\textbf{Acknowledgments.} We thank an anonymous referee for helpful comments on an earlier version of this paper.
Part of the project was carried out during J.\ Pfeffer's visit to the Isaac Newton Institute in Cambridge, UK in Summer 2018. We thank the institute for its hospitality.  
E.\ Gwynne was supported by a Herchel Smith fellowship and a Trinity College junior research fellowship. 
J.\ Pfeffer was partially supported by the National Science Foundation Graduate Research Fellowship under Grant No. 1122374.  

\subsection{The definition of the LQG metric}
\label{sec-metric-def}

It is shown in~\cite{gm-uniqueness} that for each $\gamma \in (0,2)$, there is a unique metric associated with $\gamma$-LQG which satisfies a certain list of axioms.
This metric can be constructed explicitly as the limit of appropriate regularizations --- defined by exponentiating a mollified version of the GFF~\cite{dddf-lfpp,lqg-metric-estimates,gm-uniqueness} --- but we will only need the axiomatic definition here. 
In order to make our results as general as possible we will state the axioms for the $\gamma$-LQG metric on a general domain $U\subset\BB C$, which were established in~\cite{gm-coord-change} (the paper~\cite{gm-uniqueness} only gives the list of axioms in the whole-plane case). 
In order to state these axioms we will need some elementary metric space definitions.

\begin{defn}
Let $(X,D)$ be a metric space.
\begin{itemize}
\item
For a curve $P : [a,b] \rta X$, the \emph{$D$-length} of $P$ is defined by 
\eqbn
\op{len}\left( P ; D  \right) := \sup_{T} \sum_{i=1}^{\# T} D(P(t_i) , P(t_{i-1})) 
\eqen
where the supremum is over all partitions $T : a= t_0 < \dots < t_{\# T} = b$ of $[a,b]$. Note that the $D$-length of a curve may be infinite.
\item
We say that $(X,D)$ is a \emph{length space} if for each $x,y\in X$ and each $\ep > 0$, there exists a curve of $D$-length at most $D(x,y) + \ep$ from $x$ to $y$. 
\item
For $Y\subset X$, the \emph{internal metric of $D$ on $Y$} is defined by
\eqb \label{eqn-internal-def}
D(x,y ; Y)  := \inf_{P \subset Y} \op{len}\left(P ; D \right) ,\quad \forall x,y\in Y 
\eqe 
where the infimum is over all paths $P$ in $Y$ from $x$ to $y$. 
Note that $D(\cdot,\cdot ; Y)$ is a metric on $Y$, except that it is allowed to take infinite values.  
\item
If $X$ is an open subset of $\BB C$, we say that $D$ is  a \emph{continuous metric} if it induces the Euclidean topology on $X$. 
We equip the set of continuous metrics on $X$ with the local uniform topology on $X\times X$ and the associated Borel $\sigma$-algebra.
\end{itemize}
\end{defn}

\medskip
\noindent
The defining properties of the $\gamma$-LQG metric involve the parameters
\eqb
Q = Q_\gamma =  \frac{2}{\gamma} + \frac{\gamma}{2} \quad \text{and} \quad \xi = \xi_\gamma := \frac{\gamma}{d_\gamma} , \label{eqn-xi-def}
\eqe 
where $d_\gamma > 2$ is the LQG dimension exponent from~\cite{dg-lqg-dim,dzz-heat-kernel,ghs-map-dist}, as mentioned above. We will show in this paper (Corollary~\ref{cor-lqg-dim}) that $d_\gamma$ is the Hausdorff dimension of the $\gamma$-LQG metric.
The parameter $Q$ appears in the conformal coordinate change formula for LQG. 
The relevance of the parameter $\xi$ is that the LQG metric is in some sense constructed from $e^{\xi h}$ instead of $e^{\gamma h}$; see, e.g.,~\cite{dddf-lfpp,lqg-metric-estimates,gm-uniqueness}. This was in fact pointed out in earlier work~\cite[Footnote 2]{ding-goswami-watabiki}. 
\medskip

\begin{defn}[The LQG metric]
\label{def-metric}
For an open set $U\subset \BB C$, let $\mcl D'(U)$ be the space of distributions (generalized functions) on $\BB C$, equipped with the usual weak topology.   
A \emph{$\gamma$-LQG metric} is a collection of measurable functions $h\mapsto D_h$, one for each open set $U\subset\BB C$, from $\mcl D'(U)$ to the space of continuous metrics on $U$ with the following properties. 
Let $U\subset \BB C$ be open and let $h$ be a \emph{GFF plus a continuous function} on $U$: i.e., $h$ is a random distribution on $U$ which can be coupled with a random continuous function $f$ in such a way that $h-f$ has the law of the (zero-boundary or whole-plane, as appropriate) GFF on $U$.  Then the associated metric $D_h$ satisfies the following axioms, where here we take $U$ to be fixed when we say ``almost surely".
\begin{enumerate}[I.]
\item \textbf{Length space.} Almost surely, $(U,D_h)$ is a length space, i.e., the $D_h$-distance between any two points of $U$ is the infimum of the $D_h$-lengths of $D_h$-continuous paths (equivalently, Euclidean continuous paths) in $U$ between the two points. \label{item-metric-length}
\item \textbf{Locality.} Let $V \subset U$ be a deterministic open set. 
The $D_h$-internal metric $D_h(\cdot,\cdot ; V)$ is a.s.\ equal to $D_{h|_V}$, so in particular it is a.s.\ given by a measurable function of $h|_V$.  \label{item-metric-local}
\item \textbf{Weyl scaling.} With $\xi$ defined in~\eqref{eqn-xi-def} and for a continuous function $f : U\rta \BB R$, define
\eqb \label{eqn-metric-f}
(e^{\xi f} \cdot D_h) (z,w) := \inf_{P : z\rta w} \int_0^{\op{len}(P ; D_h)} e^{\xi f(P(t))} \,dt , \quad \forall z,w\in U,
\eqe 
where the infimum is over all continuous paths from $z$ to $w$ in $U$ parametrized by $D_h$-length.
Then a.s.\ $ e^{\xi f} \cdot D_h = D_{h+f}$ for every continuous function $f: U\rta \BB R$. \label{item-metric-f}
\item \textbf{Conformal coordinate change.} Let $\wt U\subset \BB C$ and let $\phi : U \rta \wt U$ be a deterministic conformal map. Then, with $Q$ defined in~\eqref{eqn-xi-def}, a.s.\ \label{item-metric-coord}
\eqb \label{eqn-metric-coord0}
 D_h \left( z,w \right) = D_{h\circ\phi^{-1} + Q\log |(\phi^{-1})'|}\left(\phi(z) , \phi(w) \right)  ,\quad  \forall z,w \in U.
\eqe    
\end{enumerate}
\end{defn}

From~\cite{dddf-lfpp,local-metrics,lqg-metric-estimates,gm-confluence,gm-uniqueness,gm-coord-change}, we have the following existence and uniqueness result.

\begin{thm} \label{thm-metric}
For each $\gamma \in (0,2)$, a $\gamma$-LQG metric exists and is unique up to a deterministic global multiplicative scaling factor.
\end{thm}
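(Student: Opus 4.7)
The plan is to establish existence by constructing a candidate metric as a subsequential scaling limit of Liouville first passage percolation (LFPP), verifying the four axioms for that limit, and then proving uniqueness by showing that the ratio of any two metrics satisfying axioms I--IV is almost surely constant. The proof amounts to stitching together results developed across \cite{dddf-lfpp,local-metrics,lqg-metric-estimates,gm-confluence,gm-uniqueness,gm-coord-change}.

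For existence, I would first define, for each $\ep > 0$, the LFPP metric
\[
D_h^\ep(z,w) := \inf_{P : z\to w} \int_0^1 e^{\xi h_\ep(P(t))}\,|P'(t)|\,dt,
\]
where $h_\ep$ is a mollification of $h$ at scale $\ep$. Following \cite{dddf-lfpp}, one proves tightness of the rescaled metrics $\mathfrak a_\ep^{-1} D_h^\ep$ (with $\mathfrak a_\ep$ a median-distance normalization) in the local uniform topology via a multiscale argument exploiting the approximate scale invariance of the GFF. Any subsequential limit automatically inherits the length space property~I, and passing to the limit in the LFPP definition yields Weyl scaling~III and locality~II as in \cite{lqg-metric-estimates,local-metrics}. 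The conformal coordinate change axiom~IV on general domains $U \subset \BB C$ is established in \cite{gm-coord-change} through a change-of-variables computation at the LFPP level combined with careful control of boundary contributions.

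The hard part is uniqueness. Following the strategy of \cite{gm-uniqueness}, suppose $D_h$ and $\widetilde D_h$ both satisfy axioms I--IV, and study the random field of log-ratios $\log(\widetilde D_h(u,v)/D_h(u,v))$ as $(u,v)$ ranges over pairs of nearby points. Using locality, Weyl scaling, and the scale- and translation-invariance of the whole-plane GFF modulo additive constant, one argues that the essential supremum and essential infimum of this log-ratio over all scales are deterministic constants $c_+ \ge c_-$. The crux is to show $c_+ = c_-$, which forces $\widetilde D_h = e^{c_+} D_h$ and gives uniqueness up to a multiplicative constant. This equality is obtained in \cite{gm-uniqueness} by exploiting the confluence of LQG geodesics from \cite{gm-confluence}: with positive probability, one finds nearby regions where the ratio is simultaneously close to $c_+$ and to $c_-$, and confluence forces a single geodesic to traverse both, producing a contradiction unless $c_+ = c_-$. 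Uniqueness is then transferred from the whole-plane case to general domains $U$ via the coordinate change axiom, as in \cite{gm-coord-change}.

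The main obstacle is this final uniqueness step, and in particular the comparison $c_+ = c_-$. Tightness and axiom verification, while technically involved, follow a multiscale GFF template that is by now relatively standard in LQG; by contrast, the confluence-based bootstrap argument is the principal conceptual input of \cite{gm-confluence,gm-uniqueness} and has no obvious simpler substitute.
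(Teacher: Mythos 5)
Your proposal is correct and follows essentially the same route as the paper, which does not reprove this statement but simply cites the construction and uniqueness results of \cite{dddf-lfpp,local-metrics,lqg-metric-estimates,gm-confluence,gm-uniqueness} (LFPP tightness, axiom verification for subsequential limits, and the confluence-based proof that the ratio of two candidate metrics is constant) together with \cite{gm-coord-change} for the conformal coordinate change on general domains. Your sketch is an accurate summary of that cited argument, with only the minor caveat that the paper extends the metric from the whole plane to subdomains via locality (Axiom II), with \cite{gm-coord-change} then verifying Axiom IV for general conformal maps.
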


More precisely, it is shown in~\cite[Theorem 1.2]{gm-uniqueness}, building on~\cite{dddf-lfpp,local-metrics,lqg-metric-estimates,gm-confluence}, that for each $\gamma \in (0,2)$, there is a unique (up to a deterministic global multiplicative constant) function $h\mapsto D_h$ from $\mcl D'(\BB C)$ to the space of continuous metrics on $\BB C$ which satisfies the conditions of Definition~\ref{def-metric} for $U=\BB C$ (note that this means $\phi$ in Axiom~\ref{item-metric-coord} is required to be a complex affine map). As explained in~\cite[Remark 1.5]{gm-uniqueness}, this gives a way to define $D_h$ whenever $h$ is a GFF plus a continuous function on an open domain $U\subset\BB C$ in such a way that Axioms~\ref{item-metric-length} through~\ref{item-metric-f} hold.
Note that the metric in the whole-plane case determines the metric on other domains due to Axiom~\ref{item-metric-local}.
It is shown in~\cite[Theorem 1.1]{gm-coord-change} that with the above definition, Axiom~\ref{item-metric-coord} holds for general conformal maps. 
 
Because of Theorem~\ref{thm-metric}, we may refer to the unique metric satisfying Definition~\ref{def-metric} as \emph{the $\gamma$-LQG metric}. Technically, the metric is unique only up to a global deterministic multiplicative constant.  When referring to the $\gamma$-LQG metric, we are implicitly fixing the constant in some arbitrary way. For example, we could require that the median distance between the left and right sides of $[0,1]^2$ is 1 when $h$ is a whole-plane GFF normalized so that its average over the unit circle is zero. The choice of constant will not play any role in our results or proofs. 

\subsection{Main results}

Recall that for $\Delta > 0$, the $\Delta$-\emph{Hausdorff content} of a metric space $(X,D)$ is the number
\eqb
C^\Delta(X,D) := \inf\left\{\sum_{j=1}^\infty r_j^\Delta : \text{there is a cover of $X$ by $D$-balls of radii $\{r_j\}_{j\geq 0}$} \right\} 
\eqe
and the \emph{Hausdorff dimension} of $(X,D)$ is defined to be $\inf\{\Delta > 0: C^\Delta(X,D) = 0\}$. 

Let $h$ be a GFF plus a continuous function on an open domain $U \subset \BB{C}$ (as in Definition~\ref{def-metric}), and let $D_h$ be the associated $\gamma$-LQG metric on $U$. For a set $X \subset \BB{C}$, let $\dim_{\mcl H}^0  X$ and $\dim_{\mcl H}^{\gamma}  X$ denote the Hausdorff dimensions of the metric spaces $(X , |\cdot|)$ and $(X,D_h)$, respectively.  We will refer to these quantities as the \textit{Euclidean dimension} and \textit{quantum dimension} of the set $X$, respectively. Note that the quantum dimension, like $D_h$, depends on the choice of parameter $\gamma$.

With $\xi$ and $Q$ as in~\eqref{eqn-xi-def}, we have the following KPZ relation between the Euclidean and quantum dimensions of a deterministic Borel set $X$---or, equivalently, of a random Borel set $X$ that is independent from $h$. 
 
\begin{thm}
Let $X \subset U$ be a deterministic Borel set or a random Borel set independent from $h$.  Then a.s.\
\eqb
\dim_{\mcl H}^0 X =  \xi Q \dim_{\mcl H}^{\gamma}  X  - \frac{\xi^2}{2}  (\dim_{\mcl H}^{\gamma}  X)^2  .
\label{kpz-independent}
\eqe
\label{thm-kpz-independent}
\end{thm}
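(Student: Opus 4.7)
The plan is to combine the axiomatic scaling properties of $D_h$ with the sharp diameter estimates from~\cite{lqg-metric-estimates} to produce a moment formula for the $D_h$-diameter of a Euclidean ball, and then run first-moment arguments in both directions of~\eqref{kpz-independent}.

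First I would record (using Axioms~\ref{item-metric-local}, \ref{item-metric-f}, \ref{item-metric-coord} together with the pointwise estimates in~\cite{lqg-metric-estimates}) a sharp volume estimate of the form
\[
\op{diam}_{D_h}(B_r(z)) = r^{\xi Q + o(1)}\, e^{\xi h_r(z)}
\]
with high probability, uniformly in $z$ in compact subsets of $U$, where $h_r(z)$ is the circle average of $h$ on $\partial B_r(z)$. The factor $r^{\xi Q}$ arises from the coordinate-change axiom applied to $w\mapsto w/r$; the factor $e^{\xi h_r(z)}$ arises from Weyl-scaling by the harmonic extension of $h|_{\partial B_r(z)}$; and the subpolynomial error absorbs the contribution of the residual zero-boundary GFF inside $B_r(z)$. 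Since $h_r(z)$ is Gaussian of variance $\log(1/r)+O(1)$, the exponential moment formula gives
\[
E\!\left[\op{diam}_{D_h}(B_r(z))^{\Delta}\right] = r^{\,f(\Delta)+o(1)},\qquad f(\Delta) := \xi Q\, \Delta - \tfrac{\xi^{2}}{2}\, \Delta^{2},
\]
which is exactly the KPZ function appearing in~\eqref{kpz-independent}.

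For the inequality $f(\dim_{\mcl H}^{\gamma} X)\leq \dim_{\mcl H}^{0} X$ I would fix $\Delta$ with $f(\Delta)>\dim_{\mcl H}^{0} X+2\eta$, pick a Euclidean cover $\{B_{r_i}(z_i)\}_i$ of $X$ with $\sup_i r_i<\delta$ and $\sum_i r_i^{\dim_{\mcl H}^{0} X+\eta}\leq 1$, and use the independence of $X$ (hence of the cover) from $h$ to compute
\[
E\!\left[\sum_i \op{diam}_{D_h}(B_{r_i}(z_i))^{\Delta}\ \Big|\ X\right] \lesssim \sum_i r_i^{\,f(\Delta)+o(1)} \lesssim \delta^{\,f(\Delta)-\dim_{\mcl H}^{0} X-\eta-o(1)}.
\]
Letting $\delta\rta 0$ and applying Markov's inequality shows that the $\Delta$-$D_h$-Hausdorff content of $X$ vanishes almost surely, so $\dim_{\mcl H}^{\gamma} X\leq \Delta$; taking an infimum over admissible $\Delta$ yields the desired inequality.

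For the reverse inequality $\dim_{\mcl H}^{0} X\leq f(\dim_{\mcl H}^{\gamma} X)$ I would introduce the quantum radius function
\[
r^{q}(z,\ep) := \sup\!\left\{r>0 : \op{diam}_{D_h}(B_r(z)) \leq \ep\right\},
\]
so that $\{B_{r^{q}(z_i,\ep)}(z_i)\}_i$ is a quantum cover of quantum diameter at most $\ep$. Inverting the implicit relation $\ep \asymp r^{\xi Q}e^{\xi h_r(z)}$ via a stopping-time argument on the circle-average Brownian motion $s\mapsto h_{e^{-s}}(z)$, in the spirit of~\cite{ghm-kpz}, should yield a bound of the form $E[r^{q}(z,\ep)^{f(\Delta)}]\lesssim \ep^{\,\Delta+o(1)}$. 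The first-moment argument then runs in reverse: cover $X$ near-optimally by quantum balls of quantum radius $\ep$ and bound the expected Euclidean $f(\Delta)$-Hausdorff content of the resulting cover using this moment bound together with the independence of the centers $z_i\in X$ from $h$. The main obstacle lies precisely here: an arbitrary $D_h$-cover of $X$ depends on $h$, so Fubini against $X\perp h$ does not apply directly. The $r^{q}$ parametrization resolves this by decoupling the $h$-randomness (sitting in the implicit radii) from the $X$-randomness (sitting in the centers $z_i$), but the scale-by-scale control of $r^{q}$ via the circle-average Brownian motion is the technical heart of the argument.
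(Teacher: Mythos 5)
Your first inequality, $\xi Q\dim_{\mcl H}^\gamma X-\frac{\xi^2}{2}(\dim_{\mcl H}^\gamma X)^2\le\dim_{\mcl H}^0 X$, is proved exactly as in the paper: a moment bound $\BB E\big[\big(\sup_{u,v\in B_r(z)}D_h(u,v)\big)^p\big]\le r^{\xi Qp-\xi^2p^2/2+o_r(1)}$ (Lemma~\ref{lem-diam-moment}, deduced from~\cite{lqg-metric-estimates}) followed by a first-moment/Markov/Borel--Cantelli argument over a near-optimal Euclidean cover, which is legitimate precisely because the cover can be taken deterministic given $X$ and $X$ is independent of $h$. The only caveat is the moment range ($p<4d_\gamma/\gamma^2$ in the paper, handled by restricting to $\ell\le d_\gamma$), which you should state but which is harmless.

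The reverse inequality is where your proposal breaks down, in two distinct ways. First, the claimed estimate $\BB E[r^q(z,\ep)^{f(\Delta)}]\lesssim\ep^{\Delta+o(1)}$ is false. Writing $r=e^{-s}$ and using that $s\mapsto h_{e^{-s}}(z)$ is a Brownian motion, $r^q(z,\ep)\approx e^{-\sigma}$ where $\sigma$ is the first passage time of a Brownian motion with drift $Q$ to the level $\xi^{-1}\log\ep^{-1}$; the Laplace transform of $\sigma$ gives $\BB E[r^q(z,\ep)^p]\approx\ep^{(\sqrt{Q^2+2p}-Q)/\xi}$, so to obtain $\ep^{\Delta+o(1)}$ one needs $p\ge\xi Q\Delta+\frac{\xi^2}{2}\Delta^2$ (plus sign), whereas with $p=f(\Delta)=\xi Q\Delta-\frac{\xi^2}{2}\Delta^2$ the exponent of $\ep$ is strictly smaller than $\Delta$. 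Consequently, even granting all other steps, your covering argument could only yield $\dim_{\mcl H}^0X\le\xi Q\dim_{\mcl H}^\gamma X+\frac{\xi^2}{2}(\dim_{\mcl H}^\gamma X)^2$, which is strictly weaker than the KPZ formula. Second, the ``decoupling'' does not actually resolve the dependence problem: a near-optimal cover of $X$ by $D_h$-balls has its centers and scales chosen using $D_h$, hence is not independent of $h$, and parametrizing the Euclidean sizes by $r^q$ does not restore the independence needed to apply a per-point moment bound to the terms of the sum. These two defects are really the same phenomenon: the quantum dimension of $X$ is carried by $X\cap\mcl T_h^\alpha$ with $\alpha=\xi\dim_{\mcl H}^\gamma X>0$ (Corollary~\ref{cor-lqg-thick}), i.e.\ by balls centered where the field is atypically thick, which is exactly the behavior a moment computation at a fixed, $h$-independent center averages away --- and it is the source of the sign flip. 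The paper circumvents both issues by proving an almost surely \emph{uniform} annulus bound $D_h(\bdy B_r(z),\bdy B_{r/2}(z))\ge r^{\xi Q+\zeta}e^{\xi h_r(z)}$ valid simultaneously for all $z$ and all small $r$ (Lemma~\ref{lem-annulus-dist-uniform}), so that arbitrary, $h$-dependent quantum covers can be handled with no independence at all, giving $\dim_{\mcl H}^\gamma(X\cap\mcl T_h^\alpha)\ge\dim_{\mcl H}^0(X\cap\mcl T_h^\alpha)/(\xi(Q-\alpha))$; independence of $X$ from $h$ enters only through the formula $\dim_{\mcl H}^0(X\cap\mcl T_h^\alpha)=\max\{\dim_{\mcl H}^0X-\alpha^2/2,0\}$ from~\cite{ghm-kpz}, and optimizing over $\alpha$ then gives $\dim_{\mcl H}^0X\le f(\dim_{\mcl H}^\gamma X)$. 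To repair your approach you would need some substitute for this thick-point decomposition (or a Frostman/second-moment lower bound on the quantum side); the quantum-radius first-moment bound alone cannot produce the negative quadratic term.
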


Observe that~\eqref{kpz-independent} matches the KPZ formula~\eqref{eqn-kpz} if we define the Euclidean dimension $\Delta_0 = \dim_{\mcl H}^0 X$ and the re-scaled quantum dimension $\Delta_\gamma = \frac{1}{d_\gamma} \dim_{\mcl H}^\gamma X$. 
Since the Miller-Sheffield $\sqrt{8/3}$-LQG metric~\cite{lqg-tbm1,lqg-tbm2,lqg-tbm3} agrees with the $\sqrt{8/3}$-LQG metric of Definition~\ref{def-metric}~\cite[Corollary 1.4]{gm-uniqueness}, Theorem~\ref{thm-kpz-independent} answers in the affirmative the problem posed in \cite[Problem 9.5]{lqg-tbm2} (which asks for a proof of the KPZ formula for the $\sqrt{8/3}$-LQG metric).

Throughout this paper, for $z\in U$ and $\ep > 0$ such that $B_\ep(z)\subset U$, we write $h_\ep(z)$ for the average of the GFF $h$ over the circle $\bdy B_\ep(z)$; see~\cite[Section 3.1]{shef-kpz} for the basic properties of circle averages.
In the course of proving Theorem~\ref{thm-kpz-independent}, we also establish for each $\alpha \in [-2,2]$ a formula for the quantum dimension of the intersection of $X$ with the set $\mcl T_h^\alpha$ of \textit{$\alpha$-thick points} of the field $h$, defined in~\cite{hmp-thick-pts}  as
\eqb \label{eqn-thick-pts}
\mcl T_h^\alpha := \left\{ z\in U : \lim_{\ep\rta 0} \frac{h_\ep(z)}{\log\ep^{-1}} = \alpha \right\} .
\eqe 
It is known\footnote{The result~\cite[Theorem 4.1]{ghm-kpz} is stated for  a zero-boundary GFF; the statement for a whole-plane GFF follows from local absolute continuity.}~\cite[Theorem 4.1]{ghm-kpz} that if $X \subset U$ is a deterministic Borel set or a random Borel set independent from $h$, the Euclidean dimension of $X\cap \mcl T_h^\alpha$ is given by
\eqb
\label{eqn-thick-dim}
\dim_{\mcl H}^0\left( X\cap \mcl T_h^\alpha \right) = \max\left\{ \dim_{\mcl H}^0 X - \frac{\alpha^2}{2}  , 0 \right\} \qquad a.s.
\eqe 
We prove the analogue of this for quantum dimension. 

\begin{thm}
Let $X \subset U$ be a deterministic Borel set or a random Borel set independent from $h$.  Then, in the notation~\eqref{eqn-thick-pts}, for each $\alpha \in [-2,2]$ a.s.
\eqb
\dim_{\mcl H}^{\gamma}\left( X \cap \mcl T_h^\alpha  \right) = \frac{1}{\xi(Q - \alpha)}\max\left\{ \dim_{\mcl H}^0 X - \frac{\alpha^2}{2}  , 0 \right\} .
\label{kpz-independent-thick}
\eqe
\label{thm-kpz-independent-thick} 
\end{thm}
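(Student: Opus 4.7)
The plan is to transfer the known Euclidean formula~\eqref{eqn-thick-dim} to the quantum setting by establishing, uniformly on compacts, a precise two-sided Hölder-type relationship between the Euclidean metric and the LQG metric $D_h$ on $\alpha$-thick points. The heuristic driving everything is that near $z\in\mcl T_h^\alpha$, Weyl scaling (Axiom~\ref{item-metric-f}) together with $h_\ep(z)\approx \alpha\log\ep^{-1}$ forces the $D_h$-diameter of $B_\ep(z)$ to scale like $\ep^{\xi(Q-\alpha)}$, so that Hausdorff covers transform by raising radii to the power $\xi(Q-\alpha)$. This immediately converts~\eqref{eqn-thick-dim} into~\eqref{kpz-independent-thick} after dividing by $\xi(Q-\alpha)$.

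The first step is a uniform scaling estimate: for every compact $K\subset U$ and every $\delta>0$, there exist a.s.\ finite random $\ep_0,C>0$ such that for all $\ep\in(0,\ep_0)$ and all $z\in K$,
\eqbn
C^{-1}\,\ep^{\xi Q+\delta}\,e^{\xi h_\ep(z)} \;\le\; \op{diam}_{D_h}\!\bigl(B_\ep(z)\bigr) \;\le\; C\,\ep^{\xi Q-\delta}\,e^{\xi h_\ep(z)} .
\eqen
This should follow by combining the bounds on $D_h$-diameters of small Euclidean balls in terms of circle averages from~\cite{lqg-metric-estimates} with Axioms~\ref{item-metric-f} and~\ref{item-metric-coord}, which reduce the estimate at scale $\ep$ around $z$ to one for a centered GFF on the unit ball. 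Specializing to $z\in\mcl T_h^\alpha$ gives $\op{diam}_{D_h}(B_\ep(z))=\ep^{\xi(Q-\alpha)+o(1)}$ as $\ep\to 0$.

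For the upper bound $\dim_{\mcl H}^\gamma(X\cap\mcl T_h^\alpha)\le \dim_{\mcl H}^0(X\cap\mcl T_h^\alpha)/[\xi(Q-\alpha)]$, fix $s>\dim_{\mcl H}^0(X\cap\mcl T_h^\alpha)$ and $\delta>0$. After intersecting with a compact $K$ and stratifying $\mcl T_h^\alpha\cap K$ according to the rate of convergence in~\eqref{eqn-thick-pts} (working on the sets $\{z:|h_\ep(z)-\alpha\log\ep^{-1}|\le\delta\log\ep^{-1}\ \forall\,\ep\le 1/m\}$), any Euclidean cover by balls $B(z_j,r_j)$ with centers on the set and $\sum_j r_j^s<\eta$ can be converted, via the scaling estimate, into a $D_h$-cover by balls of radii at most $r_j^{\xi(Q-\alpha)-2\delta}$. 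This bounds the $s/[\xi(Q-\alpha)-2\delta]$-Hausdorff content of $X\cap\mcl T_h^\alpha$ in $D_h$ by $\eta$, and the conclusion follows by sending $\eta\to 0$, $\delta\to 0$, $s\downarrow\dim_{\mcl H}^0(X\cap\mcl T_h^\alpha)$, and invoking~\eqref{eqn-thick-dim}. The degenerate case $\dim_{\mcl H}^0 X<\alpha^2/2$ is automatic: arbitrarily efficient Euclidean covers of a $0$-dimensional set translate into arbitrarily efficient $D_h$-covers.

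The matching lower bound reverses the conversion. Given a $D_h$-cover of $X\cap\mcl T_h^\alpha$ by $D_h$-balls of radii $\rho_j$, replace each ball meeting the set (within $K$ and a fixed stratum) by a $D_h$-ball of at most twice the radius centered on $X\cap\mcl T_h^\alpha$; by the scaling estimate, its Euclidean radius is at most $\rho_j^{1/[\xi(Q-\alpha)+2\delta]}$, so a $t$-content bound in $D_h$ yields a $t[\xi(Q-\alpha)+2\delta]$-content bound in the Euclidean metric, and $\delta\to 0$ closes the gap with~\eqref{eqn-thick-dim}. The principal obstacle is the uniformity in Step~1: the estimates of~\cite{lqg-metric-estimates} have to be applied simultaneously for all $z\in K$ and all small $\ep$, which requires enough decay/tail control to allow the stratification of $\mcl T_h^\alpha$ on which the multiplicative constants $C,\ep_0$ become uniform. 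Everything else is a comparison-of-covers calculation built on top of this estimate plus the already-established Euclidean identity~\eqref{eqn-thick-dim}.
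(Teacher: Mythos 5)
Your lower-bound half is essentially the paper's own argument, and it is the only half of your Step~1 that can actually be made uniform in $z$: the paper's Lemma~\ref{lem-annulus-dist-uniform} gives exactly the pathwise statement $D_h(\bdy B_r(z),\bdy B_{r/2}(z))\geq r^{\xi Q+\zeta}e^{\xi h_r(z)}$ for \emph{all} $z$ and all small $r$ (proved from a fixed-center estimate holding with superpolynomial probability, a union bound over a fine grid, and circle-average continuity), and combining it with Lemma~\ref{lem-circle-avg-bound} and the local H\"older continuity to re-center the covering balls converts quantum covers into Euclidean covers just as you describe; this yields the lower bound for arbitrary Borel sets (Proposition~\ref{prop-thick-lower}), independence not needed.

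The gap is in the upper half of your Step~1, on which your upper bound entirely rests. A uniform estimate $\sup_{u,v\in B_\ep(z)}D_h(u,v)\leq C\,\ep^{\xi Q-\delta}e^{\xi h_\ep(z)}$ for \emph{every} $\delta>0$, simultaneously for all $z\in K$ and all small $\ep$, is not available and should not be expected to hold: the normalized diameter $e^{-\xi h_{2r}(z)}\sup_{u,v\in B_r(z)}D_h(u,v;B_{2r}(z))$ has finite moments only for $p<4d_\gamma/\gamma^2$ (Lemma~\ref{lem-moment0}), i.e.\ its upper tail above the typical value $r^{\xi Q}$ decays only polynomially, so a union bound over the $\asymp\ep^{-2}$ centers at scale $\ep$ forces a loss of order $\gamma^2/(2d_\gamma)$ in the exponent, and the polynomial tail means one expects exceptional points where the diameter really is polynomially larger than $\ep^{\xi Q}e^{\xi h_\ep(z)}$; nothing prevents your (random, $h$-dependent) cover centers $z_j\in X\cap\mcl T_h^\alpha$ from sitting on such points. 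Indeed, if your two-sided uniform estimate were true it would give the exact identity $\dim_{\mcl H}^\gamma Z=\dim_{\mcl H}^0 Z/[\xi(Q-\alpha)]$ pathwise for \emph{every} Borel $Z\subset\mcl T_h^\alpha$, with no independence assumption at all --- note that your upper-bound step never invokes independence, whereas the theorem assumes it and the paper's proof uses it crucially. The paper's upper bound is structured precisely to avoid the missing uniform estimate: after conditioning, the Euclidean cover is deterministic, Lemma~\ref{lem-moment0} is applied ball-by-ball in expectation, and the event that a fixed ball meets $X^{\alpha,\zeta}$ pins its circle average near $\alpha\log r_i^{-1}$, an event of probability about $r_i^{\alpha^2/2}$ by the Gaussian tail bound~\eqref{eqn-gaussian-tail-bound} --- this factor is what produces the $-\alpha^2/2$ (which you instead import from~\eqref{eqn-thick-dim}) --- and Markov plus Borel--Cantelli conclude. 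To repair your proposal you would have to abandon the pathwise uniform upper estimate and reintroduce exactly this first-moment bookkeeping over a deterministic cover, i.e.\ the paper's argument.
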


As an immediately corollary of Theorems~\ref{thm-kpz-independent} and~\ref{thm-kpz-independent-thick}, we have the following. 

\begin{cor} \label{cor-lqg-thick}
Let $X \subset U$ be a deterministic Borel set or a random Borel set independent from $h$.  Then, in the notation~\eqref{eqn-thick-pts}, a.s.\
\eqb
\dim_{\mcl H}^{\gamma}  X = \dim_{\mcl H}^{\gamma} \left( X\cap \mcl T_h^\alpha \right)  \quad \text{for} \quad \alpha = Q - \sqrt{Q^2 - 2 \dim_{\mcl H}^0 X}  .
\label{thickdim}
\eqe 
\end{cor}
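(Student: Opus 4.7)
The plan is to solve the quadratic KPZ relation from Theorem~\ref{thm-kpz-independent} for $\dim_{\mcl H}^\gamma X$, then substitute the resulting value of $\alpha$ into the thick-point formula of Theorem~\ref{thm-kpz-independent-thick} and verify that the two expressions agree. Since the corollary is labeled immediate, the entire argument consists of algebraic manipulation of the two preceding theorems—there is no genuinely hard step.

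First I would apply the quadratic formula to~\eqref{kpz-independent}, selecting the smaller root. The larger root already exceeds $Q/\xi$ when $\dim_{\mcl H}^0 X = 0$, so it cannot describe the quantum dimension of a subset of $(U,D_h)$, whose Hausdorff dimension is at most $d_\gamma = Q/\xi$. This yields
\eqbn
\dim_{\mcl H}^\gamma X = \frac{Q - \sqrt{Q^2 - 2\,\dim_{\mcl H}^0 X}}{\xi} = \frac{\alpha}{\xi},
\eqen
with $\alpha$ as in~\eqref{thickdim}. The radicand is nonnegative, since $Q > 2$ for $\gamma \in (0,2)$, and hence $Q^2 > 4 \geq 2\,\dim_{\mcl H}^0 X$.

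Next, the defining identity $Q - \alpha = \sqrt{Q^2 - 2\,\dim_{\mcl H}^0 X}$ squares and rearranges to $\dim_{\mcl H}^0 X = Q\alpha - \alpha^2/2$. Feeding this into the formula of Theorem~\ref{thm-kpz-independent-thick} yields
\eqbn
\dim_{\mcl H}^\gamma(X \cap \mcl T_h^\alpha) = \frac{Q\alpha - \alpha^2}{\xi(Q - \alpha)} = \frac{\alpha(Q-\alpha)}{\xi(Q-\alpha)} = \frac{\alpha}{\xi},
\eqen
which matches the value of $\dim_{\mcl H}^\gamma X$ just computed. The maximum with $0$ in Theorem~\ref{thm-kpz-independent-thick} is inactive here, because $\alpha \in [0,Q]$ forces $\dim_{\mcl H}^0 X - \alpha^2/2 = \alpha(Q-\alpha) \geq 0$.

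The one routine sanity check is that $\alpha \in [-2,2]$, the range over which Theorem~\ref{thm-kpz-independent-thick} applies. Clearly $\alpha \geq 0$, and $\alpha \leq 2$ is equivalent, after rearranging to $\sqrt{Q^2 - 2\,\dim_{\mcl H}^0 X} \geq Q - 2$ and squaring, to $\dim_{\mcl H}^0 X \leq 2Q - 2$; this holds because $\dim_{\mcl H}^0 X \leq 2 \leq 2Q - 2$ for $Q \geq 2$. The whole argument is thus pure bookkeeping, and the main (in fact only) ``obstacle'' is choosing the correct root of the KPZ quadratic so that the two formulas line up.
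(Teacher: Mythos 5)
Your overall route --- solve the KPZ quadratic of Theorem~\ref{thm-kpz-independent} for $\dim_{\mcl H}^\gamma X$, substitute the stated $\alpha$ into Theorem~\ref{thm-kpz-independent-thick}, and check that both give $\alpha/\xi$ --- is exactly the intended derivation, and your algebra (including the verification that $\alpha\in[0,2]$ and that the maximum with $0$ is inactive) is correct. The gap is in the step where you discard the larger root. First, $d_\gamma\neq Q/\xi$: by~\eqref{eqn-xi-def} we have $\xi=\gamma/d_\gamma$, so $d_\gamma=\gamma/\xi<Q/\xi$ since $\gamma<Q$ for all $\gamma\in(0,2)$. More seriously, the fact you invoke --- that every subset of $(U,D_h)$ has quantum Hausdorff dimension at most $d_\gamma$ --- is precisely the content of Corollary~\ref{cor-lqg-dim}, which the paper deduces \emph{from} Corollary~\ref{cor-lqg-thick} applied with $X=U$, and which was not known before this paper; quoting it here is circular. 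Nor does the a priori H\"older bound of Remark~\ref{remark-holder} rescue the step: it gives $\dim_{\mcl H}^\gamma X\le \dim_{\mcl H}^0 X/(\xi(Q-2))$, which for $\gamma$ close to $2$ exceeds the larger root $\frac{1}{\xi}\bigl(Q+\sqrt{Q^2-2\dim_{\mcl H}^0 X}\bigr)$, so it cannot exclude it. Taking Theorems~\ref{thm-kpz-independent} and~\ref{thm-kpz-independent-thick} purely as black-box statements, all you get is that $\dim_{\mcl H}^\gamma X$ is one of the two roots, together with the free monotonicity bound $\dim_{\mcl H}^\gamma X\ge\dim_{\mcl H}^\gamma(X\cap\mcl T_h^\alpha)=\alpha/\xi$; the branch selection is genuinely extra input.

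The missing ingredient is supplied by the proofs rather than the statements: the argument for the lower bound in Theorem~\ref{thm-kpz-independent} (Section~\ref{sec-kpz-independent}, via the first-moment estimate of Lemma~\ref{lem-diam-moment}, which is valid for exponents up to $4d_\gamma/\gamma^2>Q/\xi$) shows that a.s.\ $\dim_{\mcl H}^\gamma X\le\ell$ whenever $\xi Q\ell-\tfrac{\xi^2}{2}\ell^2>\dim_{\mcl H}^0 X$; since $\ell\mapsto\xi Q\ell-\tfrac{\xi^2}{2}\ell^2$ is increasing on $[0,Q/\xi]$, optimizing over such $\ell$ pins $\dim_{\mcl H}^\gamma X$ to the smaller root, i.e.\ the proofs actually establish the resolved form $\dim_{\mcl H}^\gamma X=\frac{1}{\xi}\bigl(Q-\sqrt{Q^2-2\dim_{\mcl H}^0 X}\bigr)$, from which the corollary really is immediate. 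So your computation is repairable, but as written the root-selection step rests on a misidentified constant and on a fact whose proof in the paper depends on the very corollary you are proving; you need to either cite the resolved form as what the proofs of Theorem~\ref{thm-kpz-independent} establish, or supply an independent upper bound ruling out the larger branch.
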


By Corollary~\ref{cor-lqg-thick} applied with $X = U$, we get that the $\gamma$-LQG dimension exponent $d_\gamma$ from~\cite{dzz-heat-kernel,dg-lqg-dim}, as discussed in Section~\ref{sec-overview}, is the Hausdorff dimension of the $\gamma$-LQG metric.

\begin{cor} \label{cor-lqg-dim}
Almost surely, $\dim_{\mcl H}^{\gamma}(U) = \dim_{\mcl H}^{\gamma}\left(\mcl T_h^\gamma \right) = d_\gamma$. 
\end{cor}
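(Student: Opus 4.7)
The statement follows by specializing Corollary~\ref{cor-lqg-thick} and Theorem~\ref{thm-kpz-independent-thick} to the case $X = U$. Since $U$ is open in $\BB C$, its Euclidean Hausdorff dimension is $\dim_{\mcl H}^0 U = 2$. The plan has two parts: first identify the ``right'' thick-point exponent $\alpha$, then compute the quantum dimension of the corresponding thick-point set.

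For the first part, I plug $\dim_{\mcl H}^0 U = 2$ into the formula $\alpha = Q - \sqrt{Q^2 - 2\dim_{\mcl H}^0 X}$ from Corollary~\ref{cor-lqg-thick}. A direct calculation using $Q = 2/\gamma + \gamma/2$ gives $Q^2 - 4 = (2/\gamma - \gamma/2)^2$; since $\gamma \in (0,2)$ implies $2/\gamma - \gamma/2 > 0$, the square root is exactly $2/\gamma - \gamma/2$, and therefore
\eqbn
\alpha = Q - \sqrt{Q^2 - 4} = \Bigl(\tfrac{2}{\gamma} + \tfrac{\gamma}{2}\Bigr) - \Bigl(\tfrac{2}{\gamma} - \tfrac{\gamma}{2}\Bigr) = \gamma.
\eqen
So Corollary~\ref{cor-lqg-thick} applied with $X = U$ yields $\dim_{\mcl H}^{\gamma} U = \dim_{\mcl H}^{\gamma}(\mcl T_h^\gamma)$ almost surely, which gives the first equality in the corollary.

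For the second part, I evaluate Theorem~\ref{thm-kpz-independent-thick} at $X = U$ and $\alpha = \gamma$. The Euclidean side is $2 - \gamma^2/2 = (4-\gamma^2)/2 > 0$, so the maximum is attained by the first expression. Using $\xi = \gamma/d_\gamma$ and simplifying $Q - \gamma = 2/\gamma - \gamma/2 = (4-\gamma^2)/(2\gamma)$, I get
\eqbn
\xi(Q - \gamma) = \frac{\gamma}{d_\gamma} \cdot \frac{4-\gamma^2}{2\gamma} = \frac{4-\gamma^2}{2 d_\gamma},
\eqen
and hence
\eqbn
\dim_{\mcl H}^{\gamma}(\mcl T_h^\gamma) = \frac{1}{\xi(Q-\gamma)} \cdot \frac{4-\gamma^2}{2} = \frac{2 d_\gamma}{4-\gamma^2} \cdot \frac{4-\gamma^2}{2} = d_\gamma.
\eqen

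Combining these two computations yields $\dim_{\mcl H}^{\gamma} U = \dim_{\mcl H}^{\gamma}(\mcl T_h^\gamma) = d_\gamma$ almost surely. Since both ingredients are already established, there is no genuine obstacle here; the only thing to check is the algebraic identities above, which select the correct branch of the quadratic from the KPZ relation (equivalently, confirming that $\alpha = \gamma$ is the thick-point exponent that carries full quantum dimension for an open set, consistent with the intuition that the $\gamma$-LQG metric ``lives on'' the $\gamma$-thick points).
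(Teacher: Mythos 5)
Your proposal is correct and follows essentially the same route as the paper: apply Corollary~\ref{cor-lqg-thick} with $X=U$ (noting $\dim_{\mcl H}^0 U = 2$ gives $\alpha = Q-\sqrt{Q^2-4} = \gamma$) to get $\dim_{\mcl H}^{\gamma} U = \dim_{\mcl H}^{\gamma}(\mcl T_h^\gamma)$, and then evaluate Theorem~\ref{thm-kpz-independent-thick} at $\alpha=\gamma$ to obtain the value $d_\gamma$. The algebraic simplifications are all correct, so nothing further is needed.
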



The fact that the set of $\gamma$-thick points has full dimension in the setting of Corollary~\ref{cor-lqg-dim} is in some sense a metric analogue of the fact that the $\gamma$-LQG measure assigns full mass to $\mcl T_h^\gamma$~\cite{kahane}; see, e.g.,~\cite[Section 3.3]{shef-kpz} for a related discussion in English. 

Theorem~\ref{thm-kpz-independent} only applies when the set $X$ is independent from $h$. But there are also many sets which are \emph{not} independent from $h$ whose dimensions we might be interested in, for example $\gamma$-LQG geodesics, the boundaries of $\gamma$-LQG metric balls, and quantum Loewner evolution processes as considered in~\cite{qle}. 
To deal with such sets, we prove the following ``worst-case'' KPZ bounds relating $\dim_{\mcl H}^0 X$ and $\dim_{\mcl H}^{\gamma}  X$ which hold a.s.\ for \emph{every} Borel set $X\subset U$. 

\begin{thm}
Almost surely, for every Borel set $X\subset U$ one has
\eqb
\dim_{\mcl H}^{\gamma} X \leq  
\begin{dcases}
\frac{\dim_{\mcl H}^0 X  }{\xi \left( Q - \sqrt{  4  - 2 \dim_{\mcl H}^0 X    }\right)} ,\quad &\text{if} \: \dim_{\mcl H}^0 X < 2-\frac{\gamma^2}{2} \\
 d_\gamma ,\quad  &\text{if} \: \dim_{\mcl H}^0 X \geq  2-\frac{\gamma^2}{2}
 \end{dcases}
\label{upper-bound-quantum}
\eqe
and
\allb
& \nonumber \dim_{\mcl H}^0 X \\ & \leq 
\begin{dcases} 
 \xi \dim_{\mcl H}^{\gamma} X \left( Q - \xi \dim_{\mcl H}^{\gamma} X  + \sqrt{4 - 2Q \xi \dim_{\mcl H}^{\gamma} X  + \xi^2 (\dim_{\mcl H}^{\gamma} X)^2} \right),\:\: &\text{if} \: \dim_{\mcl H}^{\gamma} X < \frac{2}{\xi Q}   \\
2,\:\:  &\text{if} \: \dim_{\mcl H}^{\gamma} X \geq  \frac{2}{\xi Q} 
\end{dcases}
\label{upper-bound-euclidean}
\alle
\label{thm-kpz-correlated} 
\end{thm}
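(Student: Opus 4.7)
The strategy is to decompose $X$ according to which thick-point set $\mcl T_h^\alpha$ (see~\eqref{eqn-thick-pts}) each of its points belongs to, apply a pointwise scaling estimate on each piece, and then optimize over $\alpha$. The key input is a two-sided Hölder-type bound from~\cite{lqg-metric-estimates}: a.s.\ for every $\zeta > 0$ there is a random $\ep_* > 0$ such that for all $z \in U$ and $\ep \in (0,\ep_*)$ with $B_\ep(z) \subset U$,
\eqbn
e^{\xi h_\ep(z)} \ep^{\xi Q + \zeta} \;\leq\; \op{diam}_{D_h}\!\bigl(B_\ep(z);U\bigr) \;\leq\; e^{\xi h_\ep(z)} \ep^{\xi Q - \zeta} .
\eqen
This converts every Euclidean-ball cover into a $D_h$-ball cover, and vice versa, with radii related by a power determined by the local circle average.

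For fixed $\alpha \in [-2,2]$ and $\delta,\ep_0 > 0$, introduce the ``approximate thick-point'' set
\eqbn
A^{\alpha,\delta}_{\ep_0} := \bigl\{z \in U : |h_\ep(z) - \alpha \log \ep^{-1}| \leq \delta \log \ep^{-1} \text{ for all } \ep \in (0,\ep_0)\bigr\} ,
\eqen
which satisfies $\mcl T_h^\alpha \subset \bigcup_{\ep_0 > 0} A^{\alpha,\delta}_{\ep_0}$ for every $\delta > 0$. On $A^{\alpha,\delta}_{\ep_0}$ the upper Hölder bound gives $\op{diam}_{D_h}(B_\ep(z)) \leq \ep^{\xi(Q-\alpha-\delta)-\zeta}$ for small $\ep$, so converting Hausdorff covers yields
\eqbn
\dim_{\mcl H}^\gamma\bigl(X \cap A^{\alpha,\delta}_{\ep_0}\bigr) \;\leq\; \frac{\dim_{\mcl H}^0(X \cap A^{\alpha,\delta}_{\ep_0})}{\xi(Q-\alpha-\delta)-\zeta} ;
\eqen
the lower Hölder bound symmetrically produces the Euclidean-from-quantum estimate. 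Sending $\ep_0,\delta,\zeta \to 0$ and combining with $\dim_{\mcl H}^0 \mcl T_h^\alpha \leq 2-\alpha^2/2$ a.s.~\cite{hmp-thick-pts} and the trivial bound $\dim_{\mcl H}^0(X \cap \mcl T_h^\alpha) \leq \dim_{\mcl H}^0 X$ yields
\eqbn
\dim_{\mcl H}^\gamma(X \cap \mcl T_h^\alpha) \;\leq\; \frac{\min\{\dim_{\mcl H}^0 X,\; 2-\alpha^2/2\}}{\xi(Q-\alpha)} .
\eqen

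To close the argument, one checks that $X \setminus \bigcup_\alpha \mcl T_h^\alpha$ contributes nothing to $\dim_{\mcl H}^\gamma X$ (any $z$ with $\limsup_{\ep \to 0} |h_\ep(z)/\log \ep^{-1}| > 2$ is ruled out on the Hölder event, and the remaining non-convergent points are absorbed into an $A^{\alpha,\delta}_{\ep_0}$ for some $\alpha$). Hence $\dim_{\mcl H}^\gamma X = \sup_{\alpha \in [-2,2]} \dim_{\mcl H}^\gamma(X \cap \mcl T_h^\alpha)$. Writing $\Delta_0 := \dim_{\mcl H}^0 X$, maximizing $f(\alpha) := \min\{\Delta_0, 2-\alpha^2/2\}/[\xi(Q-\alpha)]$ over $\alpha \in [-2,2]$ is a calculus exercise: $f$ is maximized at $\alpha = \sqrt{4-2\Delta_0}$ when $\Delta_0 < 2-\gamma^2/2$, giving the first branch of~\eqref{upper-bound-quantum}; otherwise the interior critical point $\alpha = \gamma$ of $(2-\alpha^2/2)/(Q-\alpha)$ lies in the relevant regime and gives $f(\gamma) = \gamma/\xi = d_\gamma$. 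The bound~\eqref{upper-bound-euclidean} follows from the dual optimization $\sup_\alpha \min\{\xi(Q-\alpha)\dim_{\mcl H}^\gamma X,\; 2-\alpha^2/2\}$, whose critical quadratic $\alpha^2/2 - \xi \dim_{\mcl H}^\gamma X \cdot \alpha + \xi Q \dim_{\mcl H}^\gamma X - 2 = 0$ produces the stated square root.

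The main obstacle is uniformity: the inequalities must hold on a single a.s.\ event valid for \emph{every} Borel $X$ and \emph{every} $\alpha \in [-2,2]$ simultaneously. This is handled by first isolating the Hölder bound and the thick-point dimension bound as a.s.\ statements about $h$ alone (independent of $X$ and $\alpha$), after which the Hausdorff-dimension comparison is purely deterministic; the sup over $\alpha$ then reduces to a sup over $\alpha \in \BB Q \cap [-2,2]$ by continuity of $f$. The bounds are sharp in the worst case because they are attained when $X$ is (a subset of) $\mcl T_h^{\alpha^\ast}$ for the optimal $\alpha^\ast$, consistent with Theorem~\ref{thm-kpz-independent-thick}.
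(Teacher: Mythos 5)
The central input of your argument---the claim that a.s., for \emph{every} $\zeta>0$, one has $\sup_{u,v\in B_\ep(z)}D_h(u,v)\leq e^{\xi h_\ep(z)}\ep^{\xi Q-\zeta}$ simultaneously for all $z$ and all small $\ep$---is not a result of \cite{lqg-metric-estimates}, and it fails for small $\zeta$. Only the lower bound in your two-sided estimate is true uniformly (that is Lemma~\ref{lem-annulus-dist-uniform} of the paper). For the upper bound, what \cite{lqg-metric-estimates} provides is a fixed-$z$ estimate with only a \emph{polynomial} tail: the normalized diameter $e^{-\xi h_{2r}(z)}\sup_{u,v\in B_r(z)}D_h(u,v;B_{2r}(z))$ has finite $p$th moment only for $p<4d_\gamma/\gamma^2$ (Lemma~\ref{lem-moment0}), so a union bound over the $\asymp\ep^{-2}$ centers at scale $\ep$ can only give your uniform bound for $\zeta>\gamma\xi/2$, and the bound genuinely fails below that threshold: in the worst ball $B_\ep(z)$ there is a point $w$ and a scale $\delta=\ep^s$ with $s$ close to $1$ at which $h_\delta(w)-h_\ep(z)\approx 2\sqrt{s(s-1)}\log\ep^{-1}$ (the maximum over $\asymp\ep^{-2}$ balls of an increment of variance $(s-1)\log\ep^{-1}$), i.e.\ a local thickness exceeding $Q$, invisible to the circle average at the center; by the uniform annulus lower bound around $w$, that ball has $D_h$-diameter at least $e^{\xi h_\ep(z)}\ep^{\xi Q-\xi\gamma/2+o(1)}$. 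Since $X$ is allowed to depend on $h$, it can sit exactly on these exceptional squares, so they cannot be discarded---but they also cannot be controlled pointwise. This is precisely why the paper's proof never bounds diameters uniformly: it \emph{counts}, for each $n$ and $s$, the dyadic squares of side $2^{-n}$ with quantum diameter exceeding $2^{-sn}$ (Lemma~\ref{lem-eucl-square-count}, from the one-point tail bound plus Markov), and then runs a deterministic worst-case allocation of the covering squares to the largest-diameter classes; the bound~\eqref{upper-bound-euclidean} is proved symmetrically using the random tiling $\mcl R_m$ by quantum size and Lemma~\ref{lem-quantum-square-count}. With the only valid uniform input ($\zeta>\gamma\xi/2$), your scheme yields a strictly weaker inequality than~\eqref{upper-bound-quantum}.

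There is a second gap: the reduction $\dim_{\mcl H}^\gamma X=\sup_\alpha\dim_{\mcl H}^\gamma(X\cap\mcl T_h^\alpha)$ (and its Euclidean analogue) is not justified. A point at which $h_\ep(z)/\log\ep^{-1}$ oscillates, say between $-1$ and $1$, lies in no $\mcl T_h^\alpha$ and in no $A^{\alpha,\delta}_{\ep_0}$ with $\delta$ small, since membership in $A^{\alpha,\delta}_{\ep_0}$ requires the ratio to stay in $[\alpha-\delta,\alpha+\delta]$ for \emph{all} $\ep<\ep_0$; such oscillating points form sets of positive Hausdorff dimension, and an adversarial $X$ can be taken inside them, in which case your decomposition returns the vacuous bound $0$. (For $X$ independent of $h$ this issue is harmless because non-thick points can be handled separately, but in the worst-case setting no such reduction is available.) Your final optimization over $\alpha$ does reproduce the stated formulas, so the calculus at the end is fine; the failures are in these two inputs.
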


\begin{figure}[t!]
 \begin{center}
\includegraphics[scale=.6]{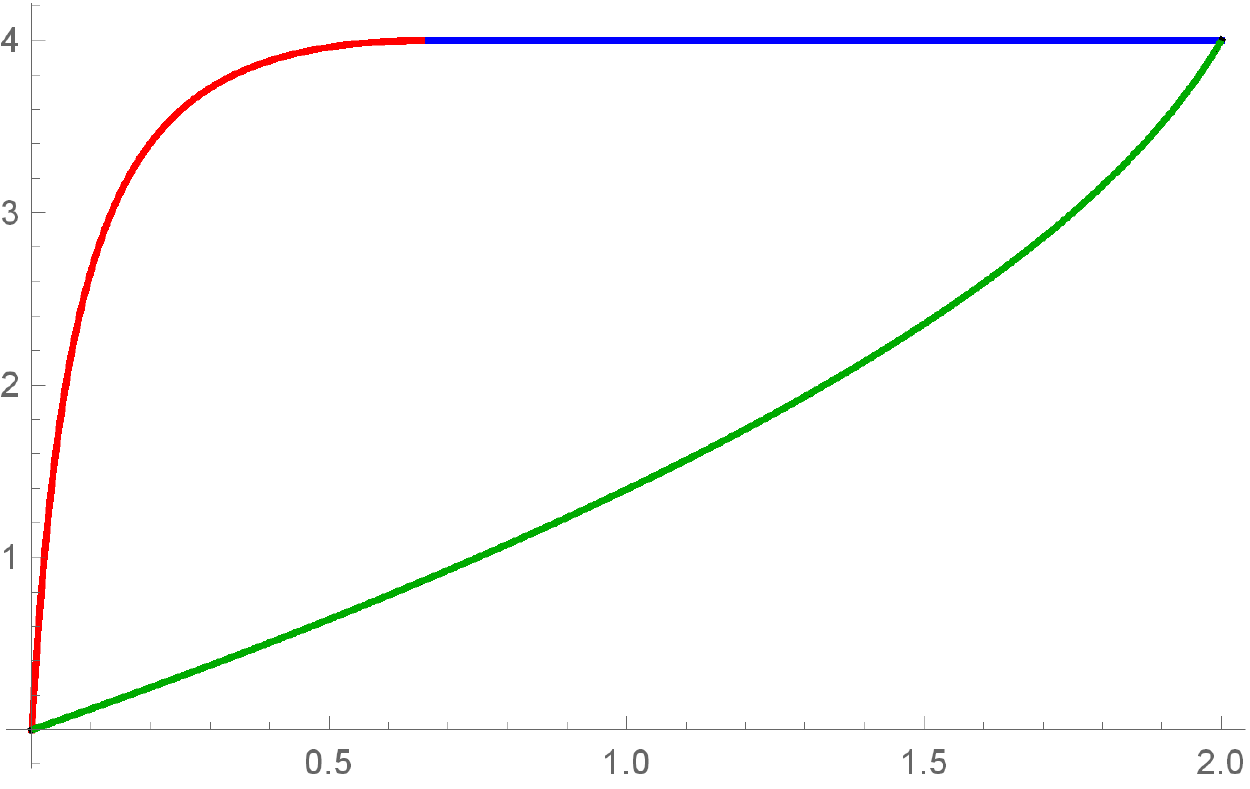} \hspace{15pt} \includegraphics[scale=.6]{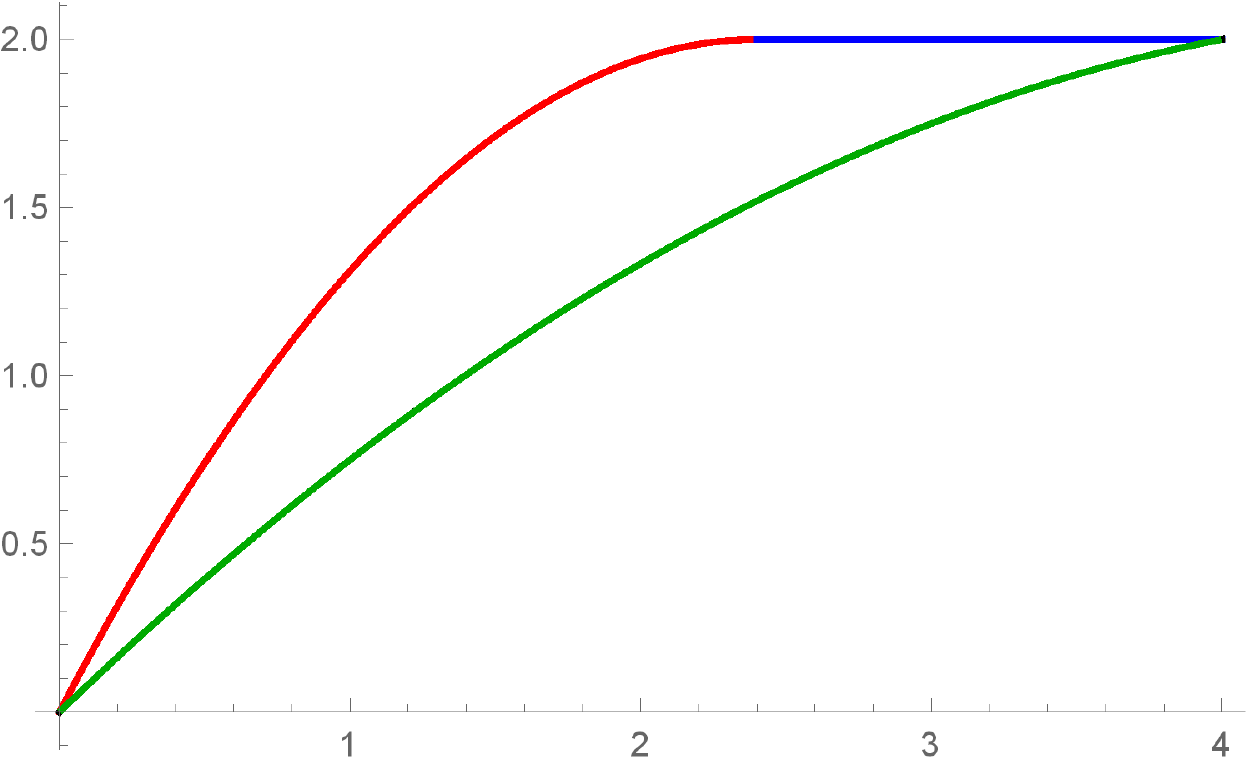}
\vspace{-0.01\textheight}
\caption{ Graphs of the bounds of Theorem~\ref{thm-kpz-correlated} in the case $\gamma = \sqrt{8/3}$ (for which $d_\gamma = 4$).  The graph on the left shows the upper bound~\eqref{upper-bound-quantum} for $\dim^\gamma_{\mcl H} X$ as a function of $\dim^0_{\mcl H} X$ (solid red and blue) and the KPZ relation between these two dimensions in the case when $X$ is independent from $h$ (dashed green). The graph on the right shows the upper bound~\eqref{upper-bound-euclidean} for $\dim^0_{\mcl H} X$ as a function of $\dim^\gamma_{\mcl H} X$ (solid red and blue) and the KPZ relation in the case when $X$ is independent from $h$ (dashed green). In both graphs, the segment where the slope of the upper curve is positive (colored in red) corresponds to the range of values for which the bound is nontrivial.}
\label{fig-kpz-correlated}
\end{center}
\vspace{-1em}
\end{figure} 
 
See Figure~\ref{fig-kpz-correlated} for a plot of the formulas from Theorems~\ref{thm-kpz-independent} and~\ref{thm-kpz-correlated}. 
The bound~\eqref{upper-bound-quantum} is optimal in the case when $X =\mcl T_h^\alpha$ is the set of $\alpha$-thick points for any $\alpha \in [\gamma,2]$.  
Indeed, by~\cite{hmp-thick-pts} (or~\eqref{eqn-thick-dim} with $X = U$)  and Corollary~\ref{cor-lqg-dim}, the upper bound for $\dim_{\mcl H}^\gamma X$ coincides with $\dim_{\mcl H}^\gamma (\mcl T_h^\alpha)$ in this case. Note that every value of $\dim_{\mcl H}^0 X$ for which the bound~\eqref{upper-bound-quantum} on $\dim_{\mcl H}^\gamma X$ is nontrivial is achieved by $X =\mcl T_h^\alpha$ for some $\alpha \in [\gamma,2]$.
Similarly, the bound~\eqref{upper-bound-euclidean} is optimal in the case when $X = \mcl T_h^\alpha$ for $\alpha \in [-2,0]$, and this covers the entire range of quantum dimensions for which the bound~\eqref{upper-bound-euclidean} is nontrivial.
Hence the bounds of Theorem~\ref{thm-kpz-correlated} cannot be improved anywhere on their respective domains without additional hypotheses on $X$. 

Theorem~\ref{thm-kpz-correlated} is proven via a ``worst-case" analysis whereby we start with a covering of $X$ by squares with a given Euclidean (resp.\ quantum) size and assume that the squares in our covering are the ones with the largest possible quantum (resp.\ Euclidean) sizes. We then deduce the theorem by bounding the number of squares with a given Euclidean and quantum size. See Section~\ref{sec-kpz-correlated} for details. 
 
KPZ-type relations for random sets which are not required to be independent from $h$ were previously considered by Aru~\cite{aru-kpz} (using a notion of ``quantum dimension" defined in terms of Euclidean balls of a given LQG area). 
He computed the quantum dimension of an SLE$_\kappa$ flow line of the GFF (in the sense of~\cite{ig1}) and observed that this dimension does not satisfy the KPZ relation. 
We expect that, in our notation, for a flow line $\eta$ one has that $\frac{1}{d_\gamma} \dim_{\mcl H}^\gamma \eta$ is given by the same formula as in~\cite{aru-kpz}.

Theorem~\ref{thm-kpz-correlated} solves a variant of~\cite[Question 7.4]{aru-kpz}, which asks for the optimal bounds relating Euclidean and quantum dimension for sets which are not independent from the GFF.

\begin{remark}
\label{remark-holder}
It is shown in~\cite[Theorem 1.7]{lqg-metric-estimates} that $D_h$ is a.s.\ locally bi-H\"older continuous with respect to the Euclidean metric, in the sense that the identity map from $U$, equipped with the Euclidean metric, to $(U , D_h)$ is locally H\"older continuous with any exponent less than $\xi(Q-2)$, and the inverse of this map is locally H\"older continuous with any exponent less than $\xi^{-1}(Q+2)^{-1}$. This implies that that a.s.\ for any Borel set $X\subset\BB C$, 
\eqb \label{eqn-holder-bounds}
 \frac{ \dim_{\mcl H}^0 X  }{\xi(Q+2)}  \leq  \dim_{\mcl H}^{\gamma} X \leq \frac{ \dim_{\mcl H}^0 X  }{\xi(Q-2)}  .
\eqe
The bounds of Theorem~\ref{thm-kpz-correlated} are strictly better than~\eqref{eqn-holder-bounds} except in degenerate cases. Hence Theorem~\ref{thm-kpz-correlated} provides more refined bounds relating the Euclidean metric and the LQG metric than the optimal H\"older exponents.
\end{remark}

The bound~\eqref{upper-bound-euclidean} is closest to optimal when $h$ is likely to be ``small" on $X$. 
An example of a set on which $h$ is likely to be small is a $\gamma$-LQG geodesic, since larger values of $h$ correspond to larger $\gamma$-LQG distances.
It was shown in~\cite{mq-geodesics} that $\gamma$-LQG geodesics have Euclidean dimension strictly less than 2. Using Theorem~\ref{thm-kpz-correlated}, we substantially improve on this bound.

\begin{cor}
\label{cor-geodesic-bound}
Almost surely, the Euclidean Hausdorff dimension of every $D_h$-geodesic is at most
\eqb \label{eqn-geodesic-bound}
\xi \left( Q - \xi + \sqrt{\xi^2 - 2 Q \xi + 4} \right) .
\eqe
In fact, it is a.s.\ the case that the Euclidean Hausdorff dimension of every $D_h$-rectifiable path is at most~\eqref{eqn-geodesic-bound}. 
\end{cor}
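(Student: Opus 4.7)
\textbf{Proof proposal for Corollary~\ref{cor-geodesic-bound}.} The plan is to reduce the corollary to a direct application of the bound~\eqref{upper-bound-euclidean} in Theorem~\ref{thm-kpz-correlated}, together with the elementary observation that any $D_h$-rectifiable curve has quantum Hausdorff dimension at most $1$.

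First, suppose $P$ is a $D_h$-rectifiable path with $D_h$-length $L < \infty$. Reparametrize $P$ by $D_h$-arc length to obtain a map $\wt P : [0,L] \to U$ which, viewed as a map from $([0,L], |\cdot|)$ to $(U, D_h)$, is $1$-Lipschitz. Since Lipschitz maps do not increase Hausdorff dimension, the image satisfies $\dim_{\mcl H}^\gamma P \leq \dim_{\mcl H}^0 [0,L] = 1$. This applies to $D_h$-geodesics because, by definition, a $D_h$-geodesic has $D_h$-length equal to the $D_h$-distance between its endpoints.

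Next, we apply~\eqref{upper-bound-euclidean}. To invoke the first case, we must check $\dim_{\mcl H}^\gamma P \leq 1 < 2/(\xi Q)$. Using $\xi = \gamma/d_\gamma$ and $Q = 2/\gamma + \gamma/2$, we compute $\xi Q = (4+\gamma^2)/(2 d_\gamma)$, and since $\gamma \in (0,2)$ and $d_\gamma > 2$, this is strictly less than $2$. Hence $1 < 2/(\xi Q)$ as required. Define
\eqbn
f(s) := \xi s \left(Q - \xi s + \sqrt{\xi^2 s^2 - 2Q\xi s + 4}\right), \quad s \in [0, 2/(\xi Q)],
\eqen
so that~\eqref{upper-bound-euclidean} reads $\dim_{\mcl H}^0 P \leq f(\dim_{\mcl H}^\gamma P)$. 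A direct computation (using $Q - 2/Q \geq \sqrt{Q^2-4}$, which implies $\xi s \leq 2/Q \leq Q - \sqrt{Q^2-4}$ and hence the radicand is nonnegative) shows that $f$ is increasing on $[0, 2/(\xi Q)]$: one verifies $f'(0) = \xi(Q+2) > 0$ and that $f'$ does not change sign before reaching the critical point at $s = 2/(\xi Q)$. Therefore $\dim_{\mcl H}^0 P \leq f(1) = \xi(Q - \xi + \sqrt{\xi^2 - 2Q\xi + 4})$, as claimed.

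The main obstacle, such as it is, lies not in any subtle mathematical step but in verifying the monotonicity of $f$ on $[0, 2/(\xi Q)]$ so that the upper bound $\dim_{\mcl H}^\gamma P \leq 1$ transfers to an upper bound on $\dim_{\mcl H}^0 P$. This is a one-variable calculus exercise, but it is the only nontrivial ingredient beyond the rectifiability observation and Theorem~\ref{thm-kpz-correlated}. All the substantive content of the corollary is inherited from the worst-case KPZ bound itself.
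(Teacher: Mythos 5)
Your proposal is correct and takes essentially the same approach as the paper: bound the quantum Hausdorff dimension of a $D_h$-rectifiable curve by $1$ via the Lipschitz arc-length parametrization and then apply~\eqref{upper-bound-euclidean}, checking $1 < 2/(\xi Q)$. The paper's one-line proof sidesteps your monotonicity analysis of $f$ by noting the quantum dimension of such a curve is exactly $1$ (any nondegenerate connected set has Hausdorff dimension at least $1$ in any metric), so~\eqref{upper-bound-euclidean} is evaluated directly at $1$; your monotonicity claim is true (and verifiable by an elementary computation), so your slightly longer route is also valid.
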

\begin{proof}
The $D_h$-Hausdorff dimension of a $D_h$-rectifiable curve is 1, so the result follows from~\eqref{upper-bound-euclidean}.
\end{proof}

The upper bound~\eqref{eqn-geodesic-bound} coincides with the upper bound for the ``Euclidean dimension" of Liouville first passage percolation geodesics obtained in~\cite[Corollary 2.7]{gp-lfpp-bounds}. This is to be expected since Theorem~\ref{thm-kpz-correlated} and~\cite[Corollary 2.7]{gp-lfpp-bounds} are proven via a similar ``worst-case" analysis. 
When $\gamma = \sqrt{8/3}$, we know that $d_{\sqrt{8/3}}=4$ (equivalently, $\xi =1/\sqrt 6$), so Corollary~\ref{cor-geodesic-bound} yields an upper bound of $\frac{4 + \sqrt{15}}{6} \approx 1.31216$ for the Euclidean Hausdorff dimension of $\sqrt{8/3}$-LQG geodesics. 
For general $\gamma\in (0,2)$, one can plug in the bounds for $d_\gamma$ from~\cite{dg-lqg-dim,gp-lfpp-bounds,ang-discrete-lfpp} to obtain an explicit non-trivial upper bound for the dimension of $\gamma$-LQG-geodesics.  

Another interesting random fractal associated with $D_h$ is the boundary of a $\gamma$-LQG metric ball.
Typically, this boundary is not connected since the metric ball has ``holes". 
Currently, the quantum dimension of a boundary component of a $\gamma$-LQG metric ball is not known except when $\gamma =\sqrt{8/3}$, and the Euclidean dimension of such a component is not known for any $\gamma \in (0,2)$.  
In the case when $\gamma=\sqrt{8/3}$, the complementary connected components of an LQG metric ball on the quantum sphere (equivalently, the Brownian map), viewed as metric spaces equipped with their internal metrics, are Brownian disks~\cite{lqg-tbm1,legall-disk-snake}.
Hence, it follows from~\cite[Theorem 3]{bet-disk-tight} that the quantum dimension of the boundary of each of these components is at most\footnote{We know from~\cite[Theorem 3]{bet-disk-tight} that the quantum dimension of the boundary of each component w.r.t.\ the $D_h$-internal metric on the component is $2$.  This implies that the dimensions of these boundaries  w.r.t.\ $D_h$ itself are at most $2$. It is expected that these dimensions are in fact equal to $2$ as well, but a proof of the lower bound has not been written down.} 2.
From this and Theorem~\ref{thm-kpz-correlated}, we get a non-trivial upper bound for the corresponding Euclidean dimension. 

\begin{cor} \label{cor-ball-bdy}
Let $\gamma=\sqrt{8/3}$ and fix $z \in \BB C$ and $s>0$. Almost surely, the Euclidean Hausdorff dimension of the boundary of each complementary connected component of the $\sqrt{8/3}$-LQG metric ball of radius $s$ centered at $z$ is at most $\frac13 (3 + 2 \sqrt 2) \approx 1.9428$.
\end{cor}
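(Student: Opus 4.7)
The proof plan is essentially a one-line application of Theorem~\ref{thm-kpz-correlated}, combined with the quantum dimension bound quoted from \cite{bet-disk-tight} just before the corollary. Specifically, the discussion preceding the statement already establishes that, for $\gamma=\sqrt{8/3}$, the quantum Hausdorff dimension $\dim_{\mcl H}^{\gamma} X$ of the boundary $X$ of any complementary component of the $\sqrt{8/3}$-LQG metric ball is at most $2$ (this comes from the identification of the components with Brownian disks and the bound on the boundary dimension of a Brownian disk in its internal metric). So what remains is just to insert this value into bound~\eqref{upper-bound-euclidean}, which gives an a.s.\ upper bound on the Euclidean dimension of \emph{every} Borel set simultaneously and therefore applies in particular to each such boundary component.

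The second step is then a numerical computation. For $\gamma=\sqrt{8/3}$ one has $d_\gamma=4$, hence $\xi = \gamma/d_\gamma = 1/\sqrt 6$, and $Q = 2/\gamma + \gamma/2 = \sqrt{3/2}+\sqrt{2/3} = 5/\sqrt 6$. In particular $\xi Q = 5/6$, so $2 < 2/(\xi Q) = 12/5$, meaning that the value $\dim_{\mcl H}^{\gamma} X \leq 2$ falls in the first (non-trivial) case of~\eqref{upper-bound-euclidean}. Substituting $\dim_{\mcl H}^{\gamma} X = 2$ into
\[
\xi\,\dim_{\mcl H}^{\gamma} X\Bigl( Q - \xi\,\dim_{\mcl H}^{\gamma} X + \sqrt{4 - 2Q\xi\,\dim_{\mcl H}^{\gamma} X + \xi^2 (\dim_{\mcl H}^{\gamma} X)^2}\Bigr)
\]
and simplifying yields $1 + 2\sqrt 2/3 = \tfrac13(3+2\sqrt 2)$, which is the desired bound.

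The only slightly delicate issue is conceptual rather than computational: \eqref{upper-bound-euclidean} is monotone in $\dim_{\mcl H}^{\gamma} X$ on the relevant range, and we are using an upper bound on the quantum dimension rather than its exact value. One should verify that the right-hand side of~\eqref{upper-bound-euclidean} is indeed nondecreasing in $\dim_{\mcl H}^{\gamma} X$ on $[0, 2/(\xi Q)]$, which is immediate by differentiating (or simply observing that both $\xi s$ and the expression in parentheses are positive and nondecreasing in $s$ on this interval). Hence plugging in the upper bound $2$ for $\dim_{\mcl H}^{\gamma} X$ gives a valid upper bound for $\dim_{\mcl H}^0 X$. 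I do not anticipate any real obstacle — once Theorem~\ref{thm-kpz-correlated} and the Brownian-disk quantum dimension input are in hand, the corollary is immediate.
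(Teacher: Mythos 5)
Your approach is exactly the paper's: the paper offers no separate argument for this corollary beyond quoting the quantum-dimension bound $\dim_{\mcl H}^\gamma X \le 2$ for a ball-boundary component (via Brownian disks) and feeding it into~\eqref{upper-bound-euclidean}, and your numerics ($\xi = 1/\sqrt 6$, $Q = 5/\sqrt 6$, $\xi Q = 5/6$, so $2 < 2/(\xi Q) = 12/5$, and the bound evaluates to $\tfrac13(3+2\sqrt2)$) are correct. One small correction to your monotonicity aside: it is \emph{not} true that the parenthesized factor $Q - \xi s + \sqrt{4 - 2Q\xi s + \xi^2 s^2}$ is nondecreasing in $s$ --- it decreases from $Q+2$ at $s=0$ to $Q$ at $s = 2/(\xi Q)$ --- so "both factors are nondecreasing" does not work as stated. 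The product is nevertheless nondecreasing on $[0, 2/(\xi Q)]$: writing $u = \xi s$ and $\alpha(u) = u - \sqrt{u^2 - 2Qu + 4}$, one checks that the bound equals $2 - \alpha(u)^2/2$, that $\alpha(u)$ increases from $-2$ to $0$ on $[0,2/Q]$ (its derivative is $1 - (u-Q)/\sqrt{u^2-2Qu+4} > 0$), and that $2 - \alpha^2/2$ is nondecreasing for $\alpha \le 0$; this is just the thick-point parametrization $X = \mcl T_h^\alpha$, $\alpha \in [-2,0]$, under which the paper notes~\eqref{upper-bound-euclidean} is sharp. With that (or a direct differentiation of the product, which indeed has nonnegative derivative vanishing only at $u = 2/Q$), plugging in the upper bound $2$ for the quantum dimension is legitimate and the corollary follows.
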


\begin{remark}
The more recent paper~\cite{gwynne-ball-bdy} computes the essential supremum of the Euclidean Hausdorff dimension of the \emph{whole} boundary (not just a boundary component) of an LQG metric ball for each $\gamma \in (0,2)$. The results of~\cite{gwynne-ball-bdy} show in particular that the Euclidean Hausdorff dimension of the boundary of a $\sqrt{8/3}$-LQG metric ball is a.s.\ at most $5/4$. This gives a better upper bound for the Euclidean dimension of a connected component of the boundary than the one of Corollary~\ref{cor-ball-bdy}. However, we still do not expect this bound is optimal since we expect that the Hausdorff dimension of the whole ball boundary is strictly larger than the Hausdorff dimension of a single connected component of the ball boundary. 
\end{remark}

Another potential application of Theorem~\ref{thm-kpz-correlated} involves the so-called \emph{quantum Loewner evolution (QLE)} processes constructed in~\cite{qle} by ``re-shuffling" Schramm-Loewner evolution (SLE$_\kappa$) curves on a $\gamma$-LQG surface for $\kappa \in \{\gamma^2,16/\gamma^2\}$. At least when $\kappa  = \gamma^2 \in (0,4)$, we expect that the $\gamma$-LQG dimension of a QLE is the same as the $\gamma$-LQG dimension of the corresponding SLE$_\kappa$ curve, which is $d_\gamma/2$. However, the Euclidean dimension of the QLE is unknown. Theorem~\ref{thm-kpz-independent} can be used to get a non-trivial upper bound for this Euclidean dimension. 

\subsection{Outline and basic notation}
\label{sec-outline}

Since part of Theorem~\ref{thm-kpz-independent} follows immediately from Theorem~\ref{thm-kpz-independent-thick}, we  present first the proof of Theorem~\ref{thm-kpz-independent-thick} in Section~\ref{sec-thm-kpz-independent-thick}, and then the proof of Theorem~\ref{thm-kpz-independent}  in Section~\ref{sec-kpz-independent}.
The proofs of these theorems are somewhat similar to arguments used in~\cite[Section 5]{ghm-kpz}, but we use estimates for the $\gamma$-LQG metric from~\cite{lqg-metric-estimates} instead of estimates for space-filling SLE.  
Finally, we prove Theorem~\ref{thm-kpz-correlated} in Section~\ref{sec-kpz-correlated}.
\medskip

\noindent  Throughout the paper, we use the following basic notation:
\medskip

\noindent
We write $\BB N = \{1,2,3,\dots\}$. 
\medskip

\noindent
For $a < b$, we define the discrete interval $[a,b]_{\BB Z}:= [a,b]\cap\BB Z$. 
\medskip

\noindent
If $f  :(0,\infty) \rta \BB R$ and $g : (0,\infty) \rta (0,\infty)$, we say that $f(\ep) = O_\ep(g(\ep))$ (resp.\ $f(\ep) = o_\ep(g(\ep))$) as $\ep\rta 0$ if $f(\ep)/g(\ep)$ remains bounded (resp.\ tends to zero) as $\ep\rta 0$. We similarly define $O(\cdot)$ and $o(\cdot)$ errors as a parameter goes to infinity. 
\medskip

\noindent
For a subset $X$ of $\BB{C}$, we let $B_r(X)$ denote the $r$-Euclidean neighborhood of the set $X$; i.e., the set of points in $\BB C$ which lie at Euclidean distance strictly less than $r$ from $X$.  In the case in which $X = \{x\}$ is a single point, we write $B_r(x) = B_r(\{x\})$.
\medskip
 

\section{Quantum dimension of the $\alpha$-thick points}
\label{sec-thm-kpz-independent-thick}

In this section we will prove Theorem~\ref{thm-kpz-independent-thick}.

\subsection{Estimates for the circle average process}
\label{sec-circle-avg-estimates} 

Let $h$ be a whole-plane GFF with the additive constant chosen so that $h_1(0) = 0$. 
To analyze thick points of $h$, we will need some results from~\cite{ghm-kpz} that we now describe.
Recall from~\eqref{eqn-thick-pts} that $h_\ep(z)$ denotes the average of $h$ over $\bdy B_\ep(z)$ and $\mcl T_h^\alpha$ denotes the set of $\alpha$-thick points of $h$.
For our purposes, we will need a set of points for which $h_\ep(z) / \log(\ep^{-1})$ is \emph{uniformly} close to $\alpha$ (note that the rate of convergence in~\eqref{eqn-thick-pts} can depend on $z$). The following lemma, which is very similar to the statement of~\cite[Lemma 4.3]{ghm-kpz} (and is proven in essentially the same way), asserts that we can find such a set whose Euclidean and quantum dimensions are not too much different from those of $X\cap \mcl T_h^\alpha$. 

\begin{lem}  \label{lem-thick-subset}
Let $\alpha \in \BB R$ and $\zeta  > 0$. Almost surely, for each bounded Borel set $X\subset \BB C$ there exists a random $ \overline{\ep} >0$ depending on $\alpha $, $\zeta$, and $X$ such that the following is true. If we set
\begin{equation}\label{lower_b2}
X^{\alpha, \zeta} := \left\{ z \in X: \frac{h_{\ep}(z)}{\log (\ep^{-1})} \in [\alpha - \zeta, \alpha+ \zeta], \forall \ep \in (0, \overline{\ep}] \right\}
\end{equation}
then $\dim_{\mcl{H}}^0 X^{\alpha, \zeta}  \geq \dim_{\mcl H}^0 (X \cap \mcl T_h^\alpha)- \zeta$ and $\dim_{\mcl{H}}^\gamma X^{\alpha, \zeta}  \geq \dim_{\mcl H}^\gamma(X \cap \mcl T_h^\alpha) - \zeta$. 
\end{lem}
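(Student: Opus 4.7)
The plan is to realize $X^{\alpha,\zeta}$ as one term of an exhausting increasing sequence of sets whose union contains $X \cap \mcl T_h^\alpha$, and then to invoke monotonicity together with countable stability of Hausdorff dimension, applied separately with respect to the Euclidean and the $\gamma$-LQG metric. First I work on the a.s.\ event on which the circle-average process $(z,\ep) \mapsto h_\ep(z)$ is jointly continuous on its natural domain, a standard consequence of the construction reviewed in~\cite[Section 3.1]{shef-kpz}. This event has probability one and does not refer to $X$, which is crucial, since the lemma demands a single null set that works simultaneously for \emph{every} bounded Borel $X$.

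For each $n \in \BB N$ I define
\begin{equation*}
A_n := \left\{ z \in X : \frac{h_\ep(z)}{\log \ep^{-1}} \in [\alpha - \zeta, \alpha + \zeta], \; \forall \ep \in (0, 1/n] \right\} ,
\end{equation*}
which is exactly the candidate $X^{\alpha,\zeta}$ corresponding to the choice $\ol{\ep} = 1/n$. Because the circle averages are continuous in $z$, each $A_n$ is the intersection of $X$ with a closed subset of $\BB C$. The sequence $\{A_n\}$ is increasing in $n$, since enlarging $n$ weakens the range over which the condition is required. Moreover, if $z \in X \cap \mcl T_h^\alpha$ then by~\eqref{eqn-thick-pts} we have $h_\ep(z)/\log \ep^{-1} \rta \alpha$, so there exists $n = n(z)$ with $z \in A_n$. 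Hence
\begin{equation*}
X \cap \mcl T_h^\alpha \subseteq \bigcup_{n=1}^\infty A_n .
\end{equation*}

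Next I apply countable stability of Hausdorff dimension to the increasing sequence $\{A_n\}$, separately for each of the two metrics at hand. Since countable stability and set-monotonicity hold for Hausdorff dimension defined with respect to any metric, this yields
\begin{equation*}
\lim_{n \rta \infty} \dim_{\mcl H}^0 A_n \geq \dim_{\mcl H}^0 (X \cap \mcl T_h^\alpha) \quad \text{and} \quad \lim_{n \rta \infty} \dim_{\mcl H}^\gamma A_n \geq \dim_{\mcl H}^\gamma (X \cap \mcl T_h^\alpha) .
\end{equation*}
Both sequences are non-decreasing in $n$, so I can choose a single $n$ large enough that each of $\dim_{\mcl H}^0 A_n$ and $\dim_{\mcl H}^\gamma A_n$ exceeds its limit minus $\zeta$. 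Setting $\ol{\ep} := 1/n$ and $X^{\alpha,\zeta} := A_n$ then gives the desired conclusion.

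There is no substantial technical obstacle; the only point requiring mild care is the order of quantifiers. The probability-one event on which the argument is run must be specified once and for all, independently of $X$, so that a single null set suffices across all bounded Borel sets (with $\ol{\ep}$ permitted to depend on $X$). Separating the a.s.\ joint continuity of the circle-average process from the subsequent deterministic dimension argument handles this cleanly, and every other ingredient (monotonicity and countable stability of Hausdorff dimension) is a general metric-space fact that uses no properties of $D_h$ beyond its being a metric inducing the Euclidean topology on $U$.
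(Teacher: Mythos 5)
Your proof is correct and follows essentially the same route as the paper's: write $X\cap\mcl T_h^\alpha$ as a subset of the increasing union of the sets $A_n$ and apply countable stability of Hausdorff dimension (for both metrics) to pick a large enough $n$. The extra remarks on continuity of the circle-average process and on the order of quantifiers are harmless but not needed, since the inclusion and the stability argument are deterministic given $h$.
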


We emphasize that the conclusion of Lemma~\ref{lem-thick-subset} holds a.s.\ for \emph{every} bounded Borel set $X\subset\BB C$, i.e., we do not need to fix $X$ (although $\ol \ep$ is allowed to depend on $X$). A similar comment applies to Lemma~\ref{lem-circle-avg-bound} below.

\begin{proof}[Proof of Lemma~\ref{lem-thick-subset}]
By definition,
\eqb \label{eqn-thick-subset-union}
X \cap \mcl T_h^\alpha \subset \bigcup_{n=1}^\infty \left\{z\in X : \frac{h_\ep(z)}{\log(\ep^{-1})} \in [\alpha-\zeta , \alpha+\zeta] ,\: \forall \ep \in (0,1/n] \right\}. 
\eqe
By the countable stability of Hausdorff dimension, for each large enough $n\in\BB N$, the set inside the union on the right side of~\eqref{eqn-thick-subset-union} has $\dim_{\mcl H}^0$-dimension at least $\dim_{\mcl H}^0 (X \cap \mcl T_h^\alpha)- \zeta$, and the same is true with $\dim_{\mcl H}^\gamma$ in place of $\dim_{\mcl H}^0$.
\end{proof} 

Next, we would like to convert the bounds of~\eqref{lower_b2} to bounds on the circle average process centered at \emph{deterministic} points. The following  lemma asserts that~\eqref{lower_b2} yields bounds on the circle averages at centers of Euclidean or quantum balls of sufficiently small radii which intersect $X^{\alpha, \zeta}$.

\begin{lem} \label{lem-circle-avg-bound}
Let $\alpha \in \BB R$ and $\zeta \in (0,1)$.
Almost surely, for each bounded Borel set $X\subset \BB C$, there exists a $\delta > 0$ (depending on $\alpha,\zeta$, and $X$) such that for each $r \in (0,\delta)$ and each $z \in \BB C$ which lies at  (Euclidean)  distance at most $r$ from a point in $X^{\alpha,\zeta}$, 
\[
\frac{h_{r}(z)}{\log (r^{-1})} \in [m_{\alpha,\zeta}, M_{\alpha,\zeta}] ,
\]
where
\eqb
m_{\alpha,\zeta} := (1-\zeta^2) (\alpha - \zeta) - 3\sqrt{10} \zeta \qquad \text{and} \qquad M_{\alpha,\zeta} := (1-\zeta^2) (\alpha + \zeta) + 3\sqrt{10} \zeta .
\label{mM}
\eqe 
\end{lem}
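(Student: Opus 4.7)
The plan is to transfer the control on circle averages provided by the definition of $X^{\alpha,\zeta}$ at radii $\ep$ and centers $w \in X^{\alpha,\zeta}$ to control on circle averages at the slightly smaller radius $r$ and a nearby center $z$, paying only an $O(\zeta \log r^{-1})$ error that fits inside the $3\sqrt{10}\,\zeta$ slack. Without loss of generality we work on a fixed bounded subset of $\BB C$ containing $X$, so both $z$ and any $w \in X^{\alpha,\zeta}$ with $|z-w| \leq r$ lie in a common bounded region. For a given small $r>0$, I set $\ep := r^{1-\zeta^2}$, so that $\log(\ep^{-1}) = (1-\zeta^2)\log(r^{-1})$; in particular $\ep \to 0$ as $r \to 0$, so once $r$ is small enough we have $\ep \leq \bar\ep$. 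Then the definition~\eqref{lower_b2} of $X^{\alpha,\zeta}$ immediately yields
\[
h_\ep(w) \in (1-\zeta^2)\log(r^{-1}) \cdot [\alpha - \zeta, \alpha + \zeta],
\]
which accounts for the leading $(1-\zeta^2)(\alpha \pm \zeta)\log(r^{-1})$ terms in $m_{\alpha,\zeta}$ and $M_{\alpha,\zeta}$.

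The remaining task is to show that, almost surely, for all sufficiently small $r$ and all $z, w$ in the bounded region with $|z-w| \leq r$, one has $|h_r(z) - h_\ep(w)| \leq 3\sqrt{10}\,\zeta \log(r^{-1})$. I would decompose this increment as $(h_r(z) - h_\ep(z)) + (h_\ep(z) - h_\ep(w))$. The first, radial, piece is, at fixed $z$, a centered Gaussian of variance $\log(\ep/r) = \zeta^2 \log(r^{-1})$, using that $t \mapsto h_{e^{-t}}(z)$ is a standard Brownian motion in $t$ up to an additive constant; the Gaussian tail then gives
\[
\BB P\Bigl(|h_r(z) - h_\ep(z)| > c\zeta \log(r^{-1})\Bigr) \leq 2\, r^{c^2/2}
\]
for any $c > 0$. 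The second, spatial, piece with $|z-w| \leq r \leq \ep$ is controlled by the standard H\"older continuity of the circle average process in its center variable at fixed scale $\ep$ (see, e.g., Section~3.1 of~\cite{shef-kpz}) and is bounded by a random constant times $(r/\ep)^\eta = r^{\eta \zeta^2}$ for some $\eta > 0$, which is negligible compared to $\zeta \log(r^{-1})$.

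To upgrade the pointwise bound to the uniform statement of the lemma, I would discretize: let $r$ run through dyadic scales $2^{-n}$, cover the bounded region by a lattice of $O(r^{-2})$ points at spacing $r$, apply the Gaussian tail bound at each lattice point, and take a union bound. Choosing $c$ so that $c^2/2 - 2$ is bounded away from zero makes the total failure probability at scale $2^{-n}$ summable in $n$, so Borel--Cantelli yields the bound simultaneously for all small enough dyadic $r$ and every lattice center. A short perturbation, again using H\"older continuity of $(z,r) \mapsto h_r(z)$, then extends the bound from the lattice of centers and dyadic scales to arbitrary admissible $(z, w, r)$; the explicit constant $3\sqrt{10}$ arises from balancing the tail exponent $c^2/2$ against the net-cardinality exponent $2$ together with this perturbation error. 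The main obstacle is that $w$ ranges over the random set $X^{\alpha,\zeta}$ rather than a deterministic net, so the net argument must be set up over all pairs $(z,w)$ in a fine enough lattice simultaneously; however, the Gaussian tail in $\log(r^{-1})$ can be made arbitrarily strong by increasing $c$, so the standard multiscale Borel--Cantelli argument goes through without difficulty.
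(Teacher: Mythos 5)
Your outline is correct and, at the level of this lemma, follows the same route as the paper: you anchor at a point $w\in X^{\alpha,\zeta}$ within distance $r$ of $z$, evaluate the defining bound of~\eqref{lower_b2} at the radius $\ep = r^{1-\zeta^2}$ (giving the $(1-\zeta^2)(\alpha\pm\zeta)$ terms), and then reduce everything to a uniform comparison $|h_r(z)-h_{r^{1-\zeta^2}}(w)|\leq 3\sqrt{10}\,\zeta\log(r^{-1})$ valid simultaneously for all $z,w$ in a bounded region with $|z-w|\leq r$. The only difference is that the paper simply quotes this comparison as Lemma~\ref{lem-circle-avg-continuity} (i.e.\ \cite[Lemma 3.15]{ghm-kpz}, applied with $\rho=\zeta^2$ on $B_1(X)$), whereas you re-derive it via the decomposition into a radial increment $h_r(z)-h_\ep(z)$ (a Gaussian of variance $\zeta^2\log r^{-1}$, handled by tail bounds, a net, and Borel--Cantelli) plus a spatial increment $h_\ep(z)-h_\ep(w)$ at fixed scale $\ep$ (handled by the H\"older modulus of the circle-average process from \cite{shef-kpz,hmp-thick-pts}). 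Your approach buys self-containedness at the cost of redoing a chaining argument; the paper's buys brevity. One quantitative caveat in your sketch: with a lattice of spacing $r$, the perturbation off the net compares circle averages at the \emph{same} radius $r$ with centers up to distance $r$ apart, and the standard weighted H\"older estimate then only gives a bound of order $r^{-\gamma'\epsilon}$ (times log factors), which is not $o(\log r^{-1})$; these same-scale, displacement-$\sim r$ increments have order-one variance, so they need either a finer net (spacing a higher power of $r$, e.g.\ $r^2$, so the H\"older modulus does give an $o_r(1)$ perturbation) or an extra chaining step. This costs only an increase of the net-cardinality exponent from $2$ to $4$, which still fits comfortably inside the $3\sqrt{10}\,\zeta$ slack, so the argument goes through after this minor adjustment.
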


The proof of Lemma~\ref{lem-circle-avg-bound} uses the following continuity estimate for the circle average process, which is an immediate consequence of~\cite[Lemma 3.15]{ghm-kpz}. 

\begin{lem}[\!\!{\cite{ghm-kpz}}] \label{lem-circle-avg-continuity}
Almost surely, for each bounded Borel set $X\subset \BB C$ and each $\rho \in (0,1/2)$, there exists $\delta = \delta(\rho,X) >0$ such that for each $r \in (0,\delta)$ and each $z,w \in X$ with $|z-w| \leq 2 r$, 
\[ 
|h_r(w) - h_{r^{1 - \rho}}(z)| \leq 3 \sqrt{10 \rho} \log (r^{-1}) 
\] 
\end{lem}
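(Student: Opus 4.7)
The plan is to establish the bound $|h_r(w) - h_{r^{1-\rho}}(z)| \leq 3\sqrt{10\rho}\log(r^{-1})$ through a standard Gaussian concentration + chaining argument applied to the circle average process of the whole-plane GFF. The key observation is that, for $r$ small enough that $2r < r^{1-\rho}$, the standard covariance formula for circle averages of the whole-plane GFF gives
\[
\op{Cov}(h_r(w), h_{r^{1-\rho}}(z)) = \log \frac{1}{\max(r, r^{1-\rho}, |z-w|)} + O(1) = (1-\rho)\log r^{-1} + O(1),
\]
where the $O(1)$ is uniform for $z,w$ in any fixed bounded set. Combining with $\op{Var}(h_r(w)) = \log r^{-1} + O(1)$ and $\op{Var}(h_{r^{1-\rho}}(z)) = (1-\rho)\log r^{-1} + O(1)$, one finds that $h_r(w) - h_{r^{1-\rho}}(z)$ is a centered Gaussian with variance $\rho \log r^{-1} + O(1)$.

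Given this, the Gaussian tail bound gives, for a fixed pair $(z,w)$ with $|z-w| \leq 2r$,
\[
\BB P\!\left[|h_r(w) - h_{r^{1-\rho}}(z)| > 3\sqrt{10\rho}\log r^{-1}\right] \leq 2\exp\!\left(-\frac{9\cdot 10\, \rho (\log r^{-1})^2}{2(\rho\log r^{-1}+O(1))}\right) = O(r^{45-o(1)}).
\]
Next I would fix a bounded open set $V \supset X$, take $r$ to range over dyadic scales $r = 2^{-k}$, and introduce an $r$-net $\mcl N_k \subset V$ of cardinality $O(r^{-2}) = O(2^{2k})$. A union bound over pairs $(z,w) \in \mcl N_k\times\mcl N_k$ with $|z-w|\leq 2r$ (there are $O(r^{-2})$ such pairs) shows that the event
\[
E_k := \bigl\{|h_r(w) - h_{r^{1-\rho}}(z)| \leq 2.9\sqrt{10\rho}\log r^{-1} \text{ for all such pairs in } \mcl N_k\bigr\}
\]
fails with probability $O(2^{-40k})$. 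Borel--Cantelli then yields that a.s.\ $E_k$ holds for all sufficiently large $k$, i.e.\ for all $r$ below some (random) $\delta_0$.

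To extend from the net to arbitrary $z,w \in X$ with $|z-w|\leq 2r$, I would invoke a standard modulus-of-continuity estimate for the circle average process — the one underlying \cite[Proposition 2.1]{hmp-thick-pts} / \cite[Lemma 3.1]{shef-kpz}, stating that $h_\ep(z)$ is a.s.\ locally Hölder continuous in $(z,\log\ep^{-1})$ with any exponent $\eta<1/2$. Since moving each endpoint from the net to the true $z$ or $w$ changes the radius by a factor between $1$ and $2$ and the center by at most $r$, this contributes at most an error $O((\log r^{-1})^{1/2+o(1)})$, which is $o(\sqrt{\rho}\log r^{-1})$ and is absorbed into the slack $3\sqrt{10\rho} - 2.9\sqrt{10\rho}$ for $r$ small. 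Finally I would extend the conclusion to every bounded Borel $X$ by exhausting $\BB C$ with the sequence $V_n = B_n(0)$, which gives a $\delta$ valid for $X \subset V_n$, and then taking a countable intersection of the full-probability events.

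The main obstacle is purely bookkeeping: one must verify that the interpolation error from the continuity estimate is genuinely negligible compared to $\sqrt{\rho}\log r^{-1}$, since the modulus of continuity of $h_r$ in $r$ includes a $\log$-type blow-up. But because the Gaussian tail already affords an enormous polynomial margin (the exponent $45$ vs.\ the dimension loss of $4$), even a crude Hölder estimate with constant $(\log r^{-1})^C$ is more than enough, so no delicate chaining is needed.
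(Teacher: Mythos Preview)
The paper does not actually prove this lemma: the sentence preceding the statement says it ``is an immediate consequence of \cite[Lemma 3.15]{ghm-kpz}'', and no further argument is given. Your from-scratch sketch --- the variance computation $\op{Var}\bigl(h_r(w)-h_{r^{1-\rho}}(z)\bigr)=\rho\log r^{-1}+O(1)$, a Gaussian tail bound yielding failure probability of order $r^{40+}$, a union bound over a net of centers and dyadic radii, and Borel--Cantelli --- is exactly the standard route by which such circle-average continuity estimates are established, and is in essence how the cited result in \cite{ghm-kpz} is obtained. So the approach is correct.

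One bookkeeping correction is worth flagging. The continuity estimate from \cite{hmp-thick-pts} is \emph{not} H\"older continuity in $(z,\log\ep^{-1})$ with a uniform constant; in the form quoted later in this paper (equation~\eqref{eqn-circle-avg-cont}) it reads $|h_r(z)-h_{r'}(z')|\leq |(z,r)-(z',r')|^{99/200}/r^{1/2}$, which with a spacing-$r$ net gives an interpolation error of order $r^{-1/200}$ --- a small \emph{negative} power of $r$, not $(\log r^{-1})^{1/2+o(1)}$ as you claim. The fix is immediate given the polynomial margin you correctly identify in the Gaussian tail: take the net spacing to be $r^{1+c}$ for some fixed $c>0$. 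The interpolation error then decays like a positive power of $r$, and the extra factor $r^{-O(c)}$ lost in the union bound is absorbed by the $r^{40+}$ tail with room to spare.
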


\begin{proof}[Proof of Lemma~\ref{lem-circle-avg-bound}]
By Lemma~\ref{lem-thick-subset}, it is a.s.\ the case that for each bounded Borel set $X\subset \BB C$, there exists $\delta_1 = \delta_1(\alpha,\zeta,X) > 0$ such that
\eqb
\frac{h_r(z)}{ \log (r^{-1})} \in  [\alpha - \zeta, \alpha + \zeta] \qquad \forall z \in X^{\alpha,\zeta}, \quad \forall r \in (0,\delta_1) .
\label{eqn-circle-avg-control-1} 
\eqe 
Also, by Lemma~\ref{lem-circle-avg-continuity} applied with $B_1(X)$ in place of $X$ and with $\rho = \zeta^2$, there exists $\delta_2 = \delta_2(\alpha,\zeta, X) \in (0,1)$ such that
\eqb
|h_r(z) - h_{r^{1- \zeta^2}}(w) | \leq 3 \sqrt{10} \zeta \log (r^{-1})  \qquad \text{$\forall z,w \in B_1(X)$ with $|z-w| \leq 2r$} \quad \forall r \in (0,\delta_2) .
\label{eqn-circle-avg-control-2}
\eqe 
If we choose $\delta < \delta_1^{1/(1-\zeta^2)} \wedge \delta_2$, then combining~\eqref{eqn-circle-avg-control-1} and~\eqref{eqn-circle-avg-control-2} yields,  for all $r \in (0,\delta)$ and each $z \in \BB C$ at (Euclidean) distance at most $r$ from a point $w \in X^{\alpha,\zeta}$, 
\[
h_r(z) \leq h_{r^{1-\zeta^2}}(w)  + 3 \sqrt{10} \zeta \log (r^{-1})  \leq (1-\zeta^2) (\alpha + \zeta) \log (r^{-1}) + 3\sqrt{10} \zeta \log (r^{-1})
\]
and
\[
h_r(z) \geq h_{r^{1-\zeta^2}}(w)  - 3 \sqrt{10} \zeta \log (r^{-1})  \geq (1-\zeta^2) (\alpha - \zeta) \log (r^{-1} )- 3\sqrt{10} \zeta \log( r^{-1}),
\]
as desired.
\end{proof}
 
\subsection{Upper bound for quantum dimension}
\label{sec-ind-upper-thick}

In this subsection we prove the upper bound for $\dim_{\mcl H}^\gamma(X \cap T_h^{\alpha})$ in Theorem~\ref{thm-kpz-independent-thick}.
We first make some reductions.
Since $X$ is independent from $h$, by conditioning on $X$ we can assume without loss of generality that $X$ is deterministic.
By the countable stability of Hausdorff dimension, we can also assume without loss of generality that $X$ is contained in a compact subset of $U$. 
By the Markov property of the whole-plane GFF (see, e.g.,~\cite[Proposition 2.8]{ig4}), if $h$ is a whole-plane GFF normalized so that $h_1(0) = 0$, then $h$ can be coupled with the zero-boundary GFF on $U$ in such a way that the two distributions a.s.\ differ by a random harmonic (hence continuous) function on $U$. 
By Weyl scaling (Axiom~\ref{item-metric-f}), we may therefore assume without loss of generality that $h$ is a whole-plane GFF normalized so that $h_1(0) = 0$.  
We make all of these assumptions throughout this section. 

The proof of the upper bound for $\dim_{\mcl H}^\gamma(X \cap T_h^{\alpha})$ in Theorem~\ref{thm-kpz-independent-thick} uses the following lemma, which is an immediate consequence of~\cite[Proposition 3.9]{lqg-metric-estimates}. 

\begin{lem}
\label{lem-moment0}
For each $p \in (0,4d_\gamma/\gamma^2)$ and each $z\in\BB C$, we have
\eqb \label{eqn-moment0}
\BB E\left[ \left(  e^{-\xi h_{2r}(z)} \sup_{u,v \in B_r(z)} D_h\left( u, v ; B_{2r}(z)   \right)   \right)^p \right] \leq O_r(r^{\xi Q p}) \quad \text{as $r\rta 0$}
\eqe
where the rate of convergence of the $O_r(r^{\xi Q p})$ as $r\rta 0$ depends only on $p$.
\end{lem}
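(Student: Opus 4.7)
The plan is to reduce the estimate to a fixed (unit) scale by exploiting the translation and scale invariance of the whole-plane GFF together with the Weyl scaling and conformal coordinate change axioms (Axioms~\ref{item-metric-f} and~\ref{item-metric-coord}), and then quote the fixed-scale moment bound of~\cite[Proposition 3.9]{lqg-metric-estimates}.

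First I would set up the scaling map. Let $\phi : B_{2r}(z) \to B_1(0)$ be the conformal map $\phi(u) = (u-z)/(2r)$. Then $\phi^{-1}(w) = 2rw + z$ and $|(\phi^{-1})'| \equiv 2r$, so Axiom~\ref{item-metric-coord} gives, for $u,v \in B_{2r}(z)$,
\[
D_h(u,v; B_{2r}(z)) = D_{h(2r\cdot+z) + Q\log(2r)}\bigl(\phi(u),\phi(v);B_1(0)\bigr).
\]
Now define the rescaled and recentered field $\tilde h(w) := h(2rw+z) - h_{2r}(z)$ on $\mathbb{C}$. By translation invariance and scale invariance of the whole-plane GFF modulo additive constant, together with the fact that $\tilde h_1(0) = 0$, the field $\tilde h$ has the same law as $h$. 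Writing $h(2r\cdot+z)+Q\log(2r) = \tilde h + h_{2r}(z) + Q\log(2r)$ and applying Axiom~\ref{item-metric-f} (Weyl scaling) with the random constant $f \equiv h_{2r}(z)+Q\log(2r)$, I obtain
\[
D_h(u,v; B_{2r}(z)) = (2r)^{\xi Q} e^{\xi h_{2r}(z)} D_{\tilde h}\bigl(\phi(u),\phi(v);B_1(0)\bigr).
\]

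For $u,v \in B_r(z)$, the images $\phi(u), \phi(v)$ lie in $B_{1/2}(0)$. Taking a supremum and dividing by $e^{\xi h_{2r}(z)}$,
\[
e^{-\xi h_{2r}(z)} \sup_{u,v \in B_r(z)} D_h\bigl(u,v; B_{2r}(z)\bigr) = (2r)^{\xi Q}\sup_{a,b \in B_{1/2}(0)} D_{\tilde h}\bigl(a,b;B_1(0)\bigr).
\]
Raising to the $p$-th power and taking expectations, since $\tilde h \stackrel{d}{=} h$, the expectation on the right equals $(2r)^{\xi Q p}$ times the deterministic quantity $\mathbb{E}\bigl[\sup_{a,b \in B_{1/2}(0)} D_h(a,b;B_1(0))^p\bigr]$, which does not depend on $r$ or $z$.

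The one remaining ingredient is finiteness of this fixed-scale moment for every $p \in (0, 4d_\gamma/\gamma^2)$; this is precisely what~\cite[Proposition 3.9]{lqg-metric-estimates} supplies (the constant $4d_\gamma/\gamma^2$ being the threshold identified there for $p$-th moment bounds on the pointwise diameter of the LQG metric). Putting everything together yields the claimed bound $O_r(r^{\xi Q p})$, with implicit constant depending only on $p$ (since the $z$-dependence has been entirely absorbed into the law-preserving translation used to define $\tilde h$). The only potential obstacle is making sure the $p$-range in~\cite[Proposition 3.9]{lqg-metric-estimates} is stated for exactly the ball configuration we need here, but if that result is stated for $\sup_{a,b\in B_\rho(0)} D_h(a,b;B_1(0))$ for some $\rho<1$, a trivial monotonicity in $\rho$ (enlarging the inner ball only increases the supremum) handles the mismatch.
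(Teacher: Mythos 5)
Your argument is correct, and it reaches the same external input as the paper by a somewhat different route. The paper's proof is a two-line application of \cite[Proposition 3.9]{lqg-metric-estimates}: that proposition is already stated uniformly over scales $r$ with scaling constants $\mathfrak c_r$, so taking $\mathfrak c_r = r^{\xi Q}$, $U = B_2(0)$, $K = \overline{B_1(0)}$ gives the $z=0$ case with the $r^{\xi Q p}$ rate built in, and the case of general $z$ follows from translation invariance of the law of $h$ modulo additive constant together with Axiom~\ref{item-metric-coord} applied to the translation $\phi(w)=w-z$ (no rescaling at all). You instead invoke the cited estimate only at a single fixed scale and recover the $r$-dependence yourself via the exact scaling relation $D_h(u,v;B_{2r}(z)) = (2r)^{\xi Q} e^{\xi h_{2r}(z)} D_{\tilde h}\bigl(\phi(u),\phi(v);B_1(0)\bigr)$ with $\tilde h = h(2r\cdot+z)-h_{2r}(z) \overset{d}{=} h$, obtained by combining Axioms~\ref{item-metric-local}, \ref{item-metric-f} and~\ref{item-metric-coord} with the affine map $\phi(u)=(u-z)/(2r)$; this is legitimate for the (strong) LQG metric and makes the origin of the exponent $\xi Q p$ and the $z$-uniformity transparent, whereas the paper's route is shorter because the reference already packages the multi-scale statement (it was proved for weak LQG metrics, where exact scale invariance is not available, so the $r$-uniform form is what the reference naturally provides). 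The one piece of bookkeeping you should verify is that the fixed-scale statement you quote matches your normalization: the cited proposition carries a circle-average prefactor (in the lemma it appears as $e^{-\xi h_{2r}(z)}$), so at unit scale you need it either with a factor $e^{-\xi \tilde h_1(0)}$, which is $1$ under your normalization $\tilde h_1(0)=0$, or else you must remove a factor such as $e^{-\xi \tilde h_2(0)}$ by a one-line H\"older/Gaussian-moment argument (exactly as the paper removes $e^{-\xi h_1(z)}$ in Lemma~\ref{lem-diam-moment}); together with the inner-ball monotonicity you already mention, this is routine and does not affect the exponent or the claim that the implicit constant depends only on $p$.
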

\begin{proof}
The estimate for $z=0$ is~\cite[Proposition 3.9]{lqg-metric-estimates} with $\frk c_r = r^{\xi Q}$, $U = B_2(0)$, and $K = \ol{B_1(0)}$. The estimate for general $z \in \BB C$ then follows from the translation invariance of the law of $h$, modulo additive constant (which implies that $h(\cdot + z)  - h_{2r}(z) \eqD h - h_{2r}(0)$) and Axiom~\ref{item-metric-coord} (applied with $\phi(w) = w-z$).
\end{proof}

\begin{proof}[Proof of Theorem~\ref{thm-kpz-independent-thick}, upper bound]
Let $\zeta >0$ and define the set $X^{\alpha, \zeta}$ as in~\eqref{lower_b2}.
Also let $m_{\alpha,\zeta}$ and $M_{\alpha,\zeta}$ be defined as in~\eqref{mM}, so that $m_{\alpha,\zeta}$ and $M_{\alpha,\zeta}$ each converge to $\alpha$ as $\zeta\rta 0$. 
Let $\ell \in [0,d_\gamma]$ satisfy
\eqb
\ell > 
\begin{dcases} 
\frac{\dim_{\mcl H}^0 X - (m_{\alpha,\zeta}^2 \wedge M_{\alpha,\zeta}^2)/2}{\xi (Q - M_{\alpha,\zeta})}
,\quad &\text{if} \: \alpha \neq 0   \\
\frac{\dim_{\mcl H}^0 X }{\xi (Q - M_{0,\zeta})},\quad  &\text{if} \: \alpha = 0
\end{dcases}
\label{eqn-ell-cond}
\eqe
We claim that a.s.\  
\eqb
  \dim_{\mcl H}^\gamma X^{\alpha,\zeta} \leq \ell .
\label{eqn-ell-result}
\eqe
Since $\dim_{\mcl H}^\gamma (X\cap \mcl T_h^\alpha) \leq \dim_{\mcl H}^\gamma X^{\alpha,\zeta}  + \zeta$, once~\eqref{eqn-ell-result} is established we can let $\ell$ decrease to the right side of~\eqref{eqn-ell-cond} and then $\zeta \rta 0$ to get that a.s.\ $\dim_{\mcl H}^\gamma (X\cap \mcl T_h^\alpha) \leq \frac{1}{\xi(Q-\alpha)}   \max\left\{ \dim_{\mcl H}^0 X - \alpha^2/2, 0\right\} $, as required. 

We will now prove~\eqref{eqn-ell-result}. Since $\ell$ satisfies~\eqref{eqn-ell-cond}, we can choose $x \in (\dim_{\mcl H}^0 X, \xi (Q - M_{\alpha,\zeta}) \ell + (m_{\alpha,\zeta}^2 \wedge M_{\alpha,\zeta}^2)/2)$ when $\alpha \neq 0$, or $x \in (\dim_{\mcl H}^0 X, \xi (Q - M_{0,\zeta}) \ell)$ when $\alpha = 0$. By~\eqref{eqn-thick-dim} and the definition of Euclidean Hausdorff dimension, for such a choice of $x$, the following is true: for each $\delta > 0$, we can choose a covering of $X$ by Euclidean balls $\{B_{r_i}(z_i)\}_{i \in \BB N}$ (depending on $\delta$) 
which satisfies
\eqb
\sum_{i \in \BB N} r_i^x < \delta .
\label{eqn-euclidean-cond} 
\eqe
Since $X$ is assumed to be deterministic, we can choose $\{B_{r_i}(z_i)\}_{i\in\BB N}$ in a deterministic manner.
Let
\eqbn
I^{\alpha, \zeta} := \{ i \in \BB N :  B_{ r_i}(z_i) \cap X^{\alpha,\zeta} \neq \emptyset \}.
\eqen
Then $\{B_{r_i}(z_i)\}_{i \in I^{\alpha,\zeta}}$ is a covering of $X^{\alpha,\zeta}$. 
We will show that, for each $\ep > 0$ and any covering $\{B_{r_i}(z_i)\}_{i \in \BB N}$ of $X$ satisfying~\eqref{eqn-euclidean-cond} with $\delta=\ep^2$,
\eqb
\BB P \left[\sum_{i \in I^{\alpha,\zeta}} \left( \sup_{u,v \in B_{r_i}(z_i)} D_h(u,v) \right)^\ell > \ep \right] < 2\ep .
\label{eqn-quantum-tail}
\eqe
We will then deduce~\eqref{eqn-ell-result} by applying the Borel-Cantelli lemma (recall that the balls $B_{r_i}(z_i)$ depend on $\ep$). 

In order to bound the sum in~\eqref{eqn-quantum-tail}, we will prove an upper bound for the expectation of each term in the sum truncated on a global regularity event of probability at least $1 - \ep$. This upper bound will yield~\eqref{eqn-quantum-tail} by Markov's inequality. We now outline how we will derive this upper bound.  
\begin{itemize}
\item We write each term of the sum in~\eqref{eqn-quantum-tail} as
\[
\left( e^{-\xi h_{2r_i}(z_i)} \sup_{u,v \in B_{r_i}(z_i)} D_h(u,v; B_{2r_i}(z_i)) \right)^\ell  \times  e^{\xi \ell h_{2r_i}(z_i)}.
\]
\item
We use Lemma~\ref{lem-moment0} to bound the first factor in this product.  
\item
To handle the $e^{\xi \ell h_{2r_i}(z_i)}$ term, we will use Lemma~\ref{lem-circle-avg-bound} to show that, since $B_{r_i}(z_i) \cap X^{\alpha,\zeta} \neq \emptyset$, the circle average $e^{\xi \ell h_{2r_i}(z_i)}$ is bounded from above by $(2 r_i)^{-\xi M_{\alpha,\zeta}}$. 
\item
Finally, when $\alpha \neq 0$, we use the fact that for each fixed $i\in\BB N$, the ball $B_{r_i}(z_i)$ is unlikely to intersect $ X^{\alpha,\zeta}$ at all, since --- again, by Lemma~\ref{lem-circle-avg-bound} --- if $B_{r_i}(z_i)$ intersects $X^{\alpha,\zeta}$ then the circle average $e^{\xi \ell h_{2r_i}(z_i)}$ must deviate from its typical value.
The probability that this is the case can be bounded above using the Gaussian tail bound. 
\end{itemize}

Having outlined our strategy for proving~\eqref{eqn-quantum-tail}, we now proceed with the proof itself. 
By~\eqref{eqn-euclidean-cond}, we have $r_i < \delta^{1/x}$ for each $i \in \BB N$.  Thus, by Lemma~\ref{lem-circle-avg-bound}, if we choose $\delta$ sufficiently small, then it holds with probability at least $1 - \ep$ that
\[
\frac{h_{2 r_i}(z)}{ \log( (2 r_i)^{-1})} \in [ m_{\alpha,\zeta} , M_{\alpha,\zeta} ]  \qquad \forall i \in I^{\alpha,\zeta} .
\]
By Markov's inequality,
\allb \nonumber
&\BB P\left[ \sum_{i \in I^{\alpha,\zeta}} \left( \sup_{u,v \in B_{r_i}(z_i)} D_h(u,v; B_{2r_i}(z_i)) \right)^\ell  >\ep \right]\\
&\qquad <
\ep + \ep^{-1}   \sum_{i \in \BB N} \BB E \left[ \left( \sup_{u,v \in B_{r_i}(z_i)} D_h(u,v; B_{2r_i}(z_i)) \right)^\ell \BB{1}_{\frac{h_{2r_i}(z)}{ \log ((2r_i)^{-1})} \in [ m_{\alpha,\zeta} , M_{\alpha,\zeta} ] } \right]. \label{upper-tail}
\alle
To bound the summand 
\eqb
\BB E \left[ \left( \sup_{u,v \in B_{r_i}(z_i)} D_h(u,v; B_{2r_i}(z_i)) \right)^\ell \BB{1}_{\frac{h_{2r_i}(z)}{ \log (2r_i)^{-1}} \in [ m_{\alpha,\zeta}, M_{\alpha,\zeta} ] } \right]
\label{eqn-prod-exp}
\eqe
in the latter expression, we first decompose the summand into a product of expectations.  The random variable $h_{2 r_i}(z_i)$ is independent from $(h - h_{2 r_i}(z))|_{B_{2 r_i}(z_i)}$~\cite[Section 3.1]{shef-kpz}. By Axioms~\ref{item-metric-local} and~\ref{item-metric-f},  the internal metric
\eqbn
e^{-\xi h_{2r_i}(z_i)} D_h\left( u,v ; B_{2r_i}(z_i) \right) 
= D_{h-h_{2r_i}(z_i)} \left( u,v ; B_{2r_i}(z_i) \right) 
\eqen
is a.s.\ given by a measurable function of $(h-h_{2r_i}(z_i))|_{B_{2r_i}(z_i)}$, so is also independent from $ h_{2r_i}(z_i)$. Thus,~\eqref{eqn-prod-exp} equals
\[
\BB E \left[  \left( e^{-\xi h_{2r_i}(z_i)} \sup_{u,v \in B_{r_i}(z_i)} D_h(u,v; B_{2r_i}(z_i)) \right)^\ell \right] \BB E \left[ e^{\ell \xi h_{2r_i}(z_i)} \BB{1}_{\frac{h_{2r_i}(z)}{ \log (2 r_i)^{-1}} \in [ m_{\alpha,\zeta} , M_{\alpha,\zeta} ] }  \right] 
\]
We now treat each of the two expectations separately.  On the one hand, Lemma~\ref{lem-moment0} gives
\eqb
\BB E \left[  \left( e^{-\xi h_{2r_i}(z_i)} \sup_{u,v \in B_{r_i}(z_i)} D_h(u,v; B_{2r_i}(z_i)) \right)^\ell \right] \leq r_i^{\xi Q \ell + o_\delta(1)} ,
\label{first-expectation}
\eqe
where here we recall that $r_i \leq \delta^{1/x}$, so $o_{r_i}(1)\leq o_\delta(1)$. 
On the other hand,
\eqb
\BB E \left[ e^{\ell \xi h_{2r_i}(z_i)} \BB{1}_{\frac{h_{2r_i}(z)}{ \log (2 r_i)^{-1}} \in  [ m_{\alpha,\zeta} , M_{\alpha,\zeta} ] }  \right]   
\leq (2r_i)^{- \ell \xi  M_{\alpha,\zeta} } \BB P\left[\frac{h_{2r_i}(z)}{ \log (2 r_i)^{-1}} \in [ m_{\alpha,\zeta} , M_{\alpha,\zeta} ] \right];
\label{second-expectation}
\eqe
and, by the Gaussian tail bound,
\eqb
\BB P\left[\frac{h_{2r_i}(z)}{ \log (2 r_i)^{-1}} \in [ m_{\alpha,\zeta} , M_{\alpha,\zeta} ] \right] \leq  (2 r_i)^{ (m_{\alpha,\zeta}^2 \wedge M_{\alpha,\zeta}^2)/2} \qquad \text{when $\alpha \neq 0$} .
\label{eqn-gaussian-tail-bound}
\eqe
Combining~\eqref{first-expectation},~\eqref{second-expectation} and~\eqref{eqn-gaussian-tail-bound}, we deduce that~\eqref{eqn-prod-exp} is at most 
\[
\begin{dcases} 
r_i^{\ell \xi (Q - M_{\alpha,\zeta})  + (m_{\alpha,\zeta}^2 \wedge M_{\alpha,\zeta}^2)/2 + o_{\delta}(1)}
,\quad &\text{if} \: \alpha \neq 0   \\
r_i^{ \ell \xi (Q - M_{0,\zeta}) + o_{\delta}(1)},\quad  &\text{if} \: \alpha = 0
\end{dcases}
\]
which (in either case) is strictly less than $r_i^x$ by the choice of $x$.  Thus,~\eqref{upper-tail} is at most 
\[
\ep + \ep^{-1} \sum_{i \in \BB N} r_i^x
\]
for $\delta$ sufficiently small. By~\eqref{eqn-euclidean-cond}, this last quantity is at most $2\ep$ provided $\delta \leq \ep^2$. This proves~\eqref{eqn-quantum-tail}.

We deduce~\eqref{eqn-ell-result} by setting $\ep = 2^{-k}$ and applying the Borel-Cantelli lemma.
\end{proof}

\subsection{Lower bound for quantum dimension}
\label{sec-ind-lower-thick}

In this subsection we prove the lower bound  for  $\dim_{\mcl H}^\gamma X \cap \mcl T_h^{\alpha}$ in Theorem~\ref{thm-kpz-independent-thick}.
In fact, we will prove the following stronger statement which does not require that $X$ is independent from $h$.

\begin{prop} \label{prop-thick-lower}
Assume that we are in the setting of Theorem~\ref{thm-kpz-independent-thick}.
Almost surely, for every Borel set $X\subset U$,
\eqb \label{eqn-thick-lower}
\dim_{\mcl H}^\gamma(X \cap \mcl T_h^{\alpha}) \geq \frac{\dim_{\mcl H}^0 (X \cap \mcl T_h^{\alpha}) }{\xi(Q-\alpha)} .
\eqe
\end{prop}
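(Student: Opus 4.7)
The plan is to argue dually to the upper bound proof of Section~\ref{sec-ind-upper-thick}: instead of converting a Euclidean cover into a quantum cover, I will convert an arbitrary quantum cover of $X \cap \mcl T_h^\alpha$ into a Euclidean cover whose radii are controlled. By the countable stability of Hausdorff dimension, I may first reduce to $X$ contained in a compact subset of $U$. Fixing $\zeta > 0$ and invoking Lemma~\ref{lem-thick-subset}, it will suffice to show a.s., for every bounded Borel $X \subset U$, that
\eqbn
\dim_{\mcl H}^\gamma X^{\alpha,\zeta} \geq \frac{\dim_{\mcl H}^0 X^{\alpha,\zeta}}{\xi(Q-\alpha)} - o_\zeta(1),
\eqen
and then send $\zeta \to 0$.

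The key technical ingredient will be a uniform lower bound on $D_h$-distances near points of $X^{\alpha,\zeta}$, analogous (but dual) to Lemma~\ref{lem-moment0}: a.s., for every bounded Borel $X \subset U$, there exists $\delta > 0$ such that
\eqbn
D_h(z, \partial B_\rho(z)) \geq \rho^{\xi(Q-\alpha) + o_\zeta(1)} \qquad \forall z \in X^{\alpha,\zeta},\ \forall \rho \in (0,\delta) .
\eqen
To establish this, I would combine a pointwise moment upper bound for $(e^{\xi h_{2\rho}(z)} / D_h(z, \partial B_\rho(z)))^p$ from~\cite{lqg-metric-estimates} with Markov's inequality and a union bound followed by Borel--Cantelli, over dyadic radii $\rho = 2^{-k}$ and a dyadic grid of centers. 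Lemma~\ref{lem-circle-avg-bound} then converts the prefactor $e^{\xi h_{2\rho}(z)}$ into $\rho^{-\xi \alpha + o_\zeta(1)}$ uniformly over $z \in X^{\alpha,\zeta}$. Given this bound, the conversion of covers is straightforward: for any quantum cover $\{B^{D_h}_{r_i}(z_i)\}_{i \in I}$ of $X^{\alpha,\zeta}$ by balls of sufficiently small radius, I may replace each $z_i$ with a nearby point of $X^{\alpha,\zeta}$ and double $r_i$; if $\rho_i$ denotes the Euclidean diameter of $B^{D_h}_{r_i}(z_i)$, then the lower bound forces $\rho_i \leq r_i^{1/[\xi(Q-\alpha) + o_\zeta(1)]}$. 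Hence $\{B_{\rho_i}(z_i)\}_{i \in I}$ is a Euclidean cover of $X^{\alpha,\zeta}$ with $\sum_i \rho_i^{[\xi(Q-\alpha) + o_\zeta(1)]\ell} \leq \sum_i r_i^\ell$ for every $\ell > 0$; taking infima over quantum covers yields $\dim_{\mcl H}^0 X^{\alpha,\zeta} \leq [\xi(Q-\alpha) + o_\zeta(1)] \dim_{\mcl H}^\gamma X^{\alpha,\zeta}$, as required.

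The hard part will be promoting the pointwise-in-$z$, fixed-$\rho$ moment estimate to the uniform-in-both statement above, since the bound must hold simultaneously for all $z \in X^{\alpha,\zeta}$ and all small $\rho$. This is the step that demands a careful Borel--Cantelli argument over a countable family of dyadic balls, mirroring the role played by Lemma~\ref{lem-moment0} in the upper bound proof. Crucially, none of the steps in the plan rely on independence between $X$ and $h$, which is precisely why Proposition~\ref{prop-thick-lower} extends beyond the independent case treated in Theorem~\ref{thm-kpz-independent-thick}.
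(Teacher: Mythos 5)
Your proposal is correct and follows essentially the same route as the paper: the paper's Lemma~\ref{lem-annulus-dist-uniform} is exactly your uniform-in-$z$-and-$\rho$ lower bound (proved, as you anticipate, from the pointwise estimate of~\cite[Proposition 3.1]{lqg-metric-estimates} via a union bound over a fine grid of centers and radii plus circle-average continuity), Lemma~\ref{lem-circle-avg-bound} supplies the conversion of $e^{\xi h_r(z)}$ into $r^{-\xi\alpha+o_\zeta(1)}$, and the quantum-to-Euclidean cover conversion and the observation that uniformity removes any independence assumption are the same. The only cosmetic differences are that the paper bounds annulus-crossing distances $D_h(\bdy B_r(z),\bdy B_{r/2}(z))$ rather than $D_h(z,\bdy B_\rho(z))$, and handles centers that are only $D_h$-close to $X^{\alpha,\zeta}$ via the bi-H\"older estimate of~\cite[Theorem 1.7]{lqg-metric-estimates} instead of your recentering-and-doubling step.
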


The lower bound for $\dim_{\mcl H}^\gamma(X \cap \mcl T_h^{\alpha})$ in Theorem~\ref{thm-kpz-independent-thick} is an immediate consequence of Proposition~\ref{prop-thick-lower} and~\eqref{eqn-thick-dim}.

As in Section~\ref{sec-ind-upper-thick}, it suffices to prove Proposition~\ref{prop-thick-lower} only in the special case when $h$ is a whole-plane GFF and $X$ is contained in some deterministic bounded open subset $U$ of $\BB C$. The key ingredient for the proof of Proposition~\ref{prop-thick-lower} is a uniform lower bound for the $D_h$-distance between two concentric Euclidean balls in terms of the circle average. The fact that this holds \emph{uniformly} over all possible Euclidean balls is the reason why we do not need to require that $X$ is independent from $h$ in Proposition~\ref{prop-thick-lower}.

\begin{lem} \label{lem-annulus-dist-uniform}
Let $U\subset \BB C$ be a bounded open set and let $\zeta >0$.
It holds with probability tending to 1 as $\ep \rta 0$ that  
\eqb
D_h\left( \bdy B_r(z) , \bdy B_{r/2}(z) \right) \geq r^{\xi Q +\zeta} e^{\xi h_r(z)} ,\quad\forall z\in U, \quad\forall r \in (0,\ep] .
\eqe
\end{lem}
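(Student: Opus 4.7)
The approach is to reduce to a single-scale statement via the scaling and Weyl axioms, prove a polynomial lower-tail bound at unit scale, and then combine a union bound with a covering argument to achieve uniformity in $(z,r)$.

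\emph{Scaling reduction.} First, I would apply the conformal coordinate change (Axiom~\ref{item-metric-coord}) with the affine map $\phi(w) = (w - z)/r$, which takes $B_r(z)$ to the unit disk. Setting $\tilde h^{z,r}(u) := h(ru + z) + Q\log r$, the axiom gives
\[
D_h(\bdy B_r(z), \bdy B_{r/2}(z)) = D_{\tilde h^{z,r}}(\bdy B_1(0), \bdy B_{1/2}(0)) .
\]
The $\bdy B_1(0)$-circle average of $\tilde h^{z,r}$ is $h_r(z) + Q\log r$, so by Weyl scaling (Axiom~\ref{item-metric-f})
\[
D_{\tilde h^{z,r}}(\bdy B_1, \bdy B_{1/2}) = r^{\xi Q} e^{\xi h_r(z)} D_{\bar h^{z,r}}(\bdy B_1, \bdy B_{1/2}), \quad \bar h^{z,r}(u) := h(ru + z) - h_r(z).
\]
By scale and translation invariance of the whole-plane GFF modulo additive constant, $\bar h^{z,r} \eqD h$ for each fixed $(z,r)$. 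It therefore suffices to show that, with probability tending to $1$ as $\ep \rta 0$, $D_{\bar h^{z,r}}(\bdy B_1, \bdy B_{1/2}) \geq r^\zeta$ for all $z \in U$ and $r \in (0, \ep]$.

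\emph{Pointwise tail bound.} The second step is a polynomial lower-tail estimate at unit scale: for each $A > 0$ there exists $C_A > 0$ with
\[
\BB P\!\left[ D_h(\bdy B_1(0), \bdy B_{1/2}(0)) < \delta \right] \leq C_A \delta^A, \qquad \forall \delta \in (0, 1).
\]
Such a bound on negative moments of $D_h$-distances between disjoint compact sets is available from the estimates of~\cite{lqg-metric-estimates} (it is the lower-tail counterpart to Lemma~\ref{lem-moment0}). Since $\bar h^{z,r} \eqD h$, the same bound applies to each $D_{\bar h^{z,r}}(\bdy B_1, \bdy B_{1/2})$.

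\emph{Uniformization.} Finally, discretize: for each dyadic radius $r_k = 2^{-k}$ take an $r_k^N$-net $\mcl Z_k \subset U$ with $N$ large. A union bound over the $O(\ep^{-C})$ pairs $(z', r_k)$ with $z' \in \mcl Z_k$ and $r_k \leq \ep$, combined with the tail bound with $A$ chosen much larger than $C/\zeta$, yields that with probability $\rta 1$ as $\ep \rta 0$,
\[
D_h(\bdy B_{r_k}(z'), \bdy B_{r_k/2}(z')) \geq r_k^{\xi Q + \zeta/2} e^{\xi h_{r_k}(z')}, \qquad \forall z' \in \mcl Z_k,\ \forall r_k \leq \ep.
\]
To pass to arbitrary $(z,r) \in U \times (0, \ep]$, I would choose $k$ with $r_k \ll r$ and grid points $z_1', \dots, z_n' \in \mcl Z_k$ on a ring of radius $\approx 3r/4$ around $z$, with spacing small enough that the sub-annuli $B_{r_k}(z_i') \setminus B_{r_k/2}(z_i') \subset B_r(z) \setminus B_{r/2}(z)$ together topologically separate $\bdy B_r(z)$ from $\bdy B_{r/2}(z)$. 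Any $D_h$-path between the two boundary circles must cross one of these sub-annuli, which lower-bounds its length via the discrete estimate. Lemma~\ref{lem-circle-avg-continuity} then relates $h_{r_k}(z_i')$ to $h_r(z)$ up to a small additive error absorbed into the remaining $\zeta/2$ slack in the exponent.

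\emph{Main obstacle.} The technical heart is the uniformization step: carefully balancing the grid fineness, the auxiliary dyadic scale $r_k$, and the losses in the tail exponent and the circle-average continuity so that the discrete estimate at the auxiliary sub-annuli indeed implies the target bound at arbitrary $(z,r)$. Once this bookkeeping is set up, the chain of reductions produces the desired uniform inequality with the prescribed exponent $\xi Q + \zeta$.
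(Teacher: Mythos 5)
Your first two steps coincide with the paper's: the scaling reduction plus a unit-scale lower-tail estimate is exactly what Lemma~\ref{lem-prop3.1} packages via \cite[Proposition 3.1]{lqg-metric-estimates} (which in fact gives a superpolynomial bound, though a polynomial bound with arbitrarily large exponent would suffice for the union bound over polynomially many net points). The gap is in the uniformization step. You place sub-annuli $B_{r_k}(z_i')\setminus B_{r_k/2}(z_i')$ at an auxiliary scale $r_k\ll r$ on a ring of radius $\approx 3r/4$ around $z$. But the discrete estimate only yields $D_h(\bdy B_{r_k}(z_i'),\bdy B_{r_k/2}(z_i'))\geq r_k^{\xi Q+\zeta/2}e^{\xi h_{r_k}(z_i')}$, and the prefactor $r_k^{\xi Q+\zeta/2}$ is far smaller than the target $r^{\xi Q+\zeta}$ once $r_k$ is a genuinely smaller power of $r$: the $D_h$-width of an annulus at Euclidean scale $r_k$ is typically of order $r_k^{\xi Q}e^{\xi h_{r_k}}$, so crossing one small sub-annulus is a much cheaper event than crossing the whole annulus, and the factor $e^{\xi h_{r_k}(z_i')}$ does not compensate (its ratio to $e^{\xi h_r(z)}$ is $r^{o(1)}$ up to Gaussian fluctuations). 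Moreover, Lemma~\ref{lem-circle-avg-continuity} requires the two centers to be within $2r$ of each other where $r$ is the \emph{smaller} radius, whereas your grid points sit at distance $3r/4\gg 2r_k$ from $z$, so the cited continuity estimate does not bridge $h_{r_k}(z_i')$ and $h_r(z)$. Finally, ``must cross one of these sub-annuli'' needs the standard fix --- the inner disks $B_{r_k/2}(z_i')$ should cover a circle separating the two boundary components, and the path must start outside all the outer disks --- since otherwise a path can graze an annulus without traversing it radially.

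The bookkeeping only closes if the auxiliary scale is within a factor $r^{o_\zeta(1)}$ of $r$ itself, and at that point the ring construction is unnecessary; this is precisely what the paper does. It takes a \emph{single} sub-annulus $B_{7/(8n)}(z')\setminus B_{5/(8n)}(z')$ with $1/n$ within $O(r^2)$ of $r$ and $z'$ a lattice point in $r^2\BB Z^2$ within $O(r^2)$ of $z$, so that the sub-annulus is contained in $B_r(z)\setminus B_{r/2}(z)$ and monotonicity of $D_h$ across nested annuli gives the bound immediately; the circle averages $h_{1/n}(z')$ and $h_r(z)$ are then compared at nearly identical circles, where the continuity estimate costs only a factor $e^{O(r^{49/100})}$, absorbed into the $\zeta/2$ slack. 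I would recommend replacing your ring of small annuli with this single same-scale, nearly concentric sub-annulus.
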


To prove Lemma~\ref{lem-annulus-dist-uniform}, we will use the following lower bound for the $D_h$-distance between two concentric Euclidean balls that follows directly from~\cite[Proposition 3.1]{lqg-metric-estimates}, together with a union bound argument.  

\begin{lem}
\label{lem-prop3.1}
For each $\zeta > 0$ and each $z \in \BB{C}$, it holds with superpolynomially high probability as $r \rta 0$ (i.e., with probability $1 - O_r(r^{-p})$ for every $p>0$), at a rate which depends on $\zeta$ but not on $z$, that
\eqb \label{eqn-prop3.1}
D_h\left( \bdy B_{7r/8}(z),  \bdy B_{5r/8}(z) \right) \geq r^{  \xi Q + \zeta} e^{\xi h_r(z)} .
\eqe 
\end{lem}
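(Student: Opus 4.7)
The plan is to reduce the statement for arbitrary $z \in \BB C$ to the fixed case $z = 0$ handled by [lqg-metric-estimates, Proposition 3.1], using the translation invariance of the whole-plane GFF modulo additive constant together with Axioms III and IV. Since the reduction preserves the probability of the underlying event, the uniformity of the rate in $z$ (with dependence only on $\zeta$) will follow automatically.

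First apply the coordinate change Axiom IV with the affine map $\phi(w) = w - z$. Since $(\phi^{-1})' \equiv 1$, the axiom gives
\[
D_h\bigl(\bdy B_{7r/8}(z), \bdy B_{5r/8}(z)\bigr) = D_{h(\cdot + z)}\bigl(\bdy B_{7r/8}(0), \bdy B_{5r/8}(0)\bigr).
\]
Set $\hat h(w) := h(w+z) - h_1(z)$. By translation invariance of the whole-plane GFF modulo additive constant, $\hat h \eqD h$ (both are normalized so that the $1$-circle average at the origin vanishes), and by Weyl scaling (Axiom III) with the constant function $f \equiv h_1(z)$,
\[
D_{h(\cdot + z)}\bigl(\bdy B_{7r/8}(0), \bdy B_{5r/8}(0)\bigr) = e^{\xi h_1(z)} D_{\hat h}\bigl(\bdy B_{7r/8}(0), \bdy B_{5r/8}(0)\bigr).
\]

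Now apply [lqg-metric-estimates, Proposition 3.1] to $\hat h$, which has the same law as $h$. This yields, with superpolynomially high probability as $r \to 0$ at a rate depending only on $\zeta$, the lower bound
\[
D_{\hat h}\bigl(\bdy B_{7r/8}(0), \bdy B_{5r/8}(0)\bigr) \geq r^{\xi Q + \zeta} e^{\xi \hat h_r(0)}.
\]
Using the identity $\hat h_r(0) = h_r(z) - h_1(z)$ and combining the three displayed equalities gives
\[
D_h\bigl(\bdy B_{7r/8}(z), \bdy B_{5r/8}(z)\bigr) \geq e^{\xi h_1(z)} \cdot r^{\xi Q + \zeta} e^{\xi h_r(z) - \xi h_1(z)} = r^{\xi Q + \zeta} e^{\xi h_r(z)}.
\]
Because the event whose probability we are estimating has, after the reduction, exactly the same probability as the corresponding event for $\hat h$, and $\hat h \eqD h$, the rate of superpolynomial decay depends on $\zeta$ alone and not on $z$.

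The mention of a ``union bound argument'' in the lemma statement most plausibly refers to combining the conclusion of [lqg-metric-estimates, Proposition 3.1] with a companion superpolynomial estimate (e.g., a control of an intermediate circle average, or several overlapping annular lower bounds, if that Proposition is formulated for a different specific pair of radii than $7/8, 5/8$). Any such combination intersects two events of probability $1 - O_r(r^{p})$ for every $p > 0$, producing another event of the same type, so it does not alter the superpolynomial rate. The substantive content of the proof lies in the translation/Weyl reduction above; I expect the only bookkeeping required is keeping track of the three additive constants $h_1(z)$, $h_r(z)$, and $\hat h_r(0)$ and not introducing any dependence on $z$ in the error term.
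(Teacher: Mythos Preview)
Your proof is correct and follows essentially the same approach as the paper: apply \cite[Proposition 3.1]{lqg-metric-estimates} at $z=0$, then transfer to general $z$ via translation invariance of the law of $h$ modulo additive constant together with Axiom~IV (the paper leaves the Weyl-scaling step implicit, but your explicit use of Axiom~III to cancel the $e^{\xi h_1(z)}$ factor is exactly what is needed). One clarification: the phrase ``together with a union bound argument'' in the text preceding the lemma does not refer to the proof of this lemma at all---it describes how this lemma will be combined with a union bound over a lattice of centers to prove Lemma~\ref{lem-annulus-dist-uniform}---so your final speculative paragraph is unnecessary and can be deleted.
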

\begin{proof}
The estimate for $z=0$ is an application of~\cite[Proposition 3.1]{lqg-metric-estimates} with $\frk c_r = r^{\xi Q}$, $A = r^{-\zeta}$, $K_1 = \bdy B_{7/8}(0)$, and $K_2 = \bdy B_{5/8}(0)$. 
The estimate for general $z \in \BB C$ then follows from the translation invariance of the law of $h$ (modulo additive constant) and Axiom~\ref{item-metric-coord}.
\end{proof}

\begin{proof}[Proof of Lemma~\ref{lem-annulus-dist-uniform}]
By Lemma~\ref{lem-prop3.1} and a union bound, it holds with superpolynomially high probability as $r \rta 0$ that 
\eqb
D_h\left( \bdy B_{7r/8}(z) , \bdy B_{5r/8}(z) \right) \geq r^{\xi Q +\zeta/2} e^{\xi h_r(z)} , \quad \forall z\in B_1(U) \cap (r^2 \BB Z^2) .
\eqe
Setting $r = 1/n$ and taking a union bound over all $n\in\BB N$ with $n\geq (2\ep)^{-1}$ shows that with superpolynomially high probability as $\ep\rta 0$,
\eqb \label{eqn-annulus-union}
D_h\left( \bdy B_{7 /(8n)}(z) , \bdy B_{5 /(8 n) }(z) \right) \geq n^{-(\xi Q +\zeta/2) } e^{\xi h_{1/n}(z)} , \quad \forall z\in B_1(U) \cap \left(\frac{1}{n^2} \BB Z^2 \right), \quad \forall n \geq (2\ep)^{-1} .
\eqe
By a standard circle average continuity estimate (see, e.g.,~\cite[Lemma 3.13]{ghm-kpz}, which itself follows from~\cite[Proposition 2.1]{hmp-thick-pts}), it holds with probability tending to 1 as $\ep\rta 0$ that 
\eqb \label{eqn-circle-avg-cont}
|h_r(z) - h_{r'}(z')| \leq \frac{|(z,r) - (z',r')|^{99/200}}{r^{1/2}} , 
\quad \forall z,z'\in U ,
\quad \text{$\forall r,r' \in (0,\ep]$ with $r/r' \in [1/2,2]$}.
\eqe

Henceforth assume that~\eqref{eqn-annulus-union} and~\eqref{eqn-circle-avg-cont} both hold, which happens with probability tending to 1 as $\ep\rta 0$.
If $\ep$ is sufficiently small, then for each $r\in (0,\ep]$ and each $z\in U$ we can find $n\in\BB N$ such that $|1/n  - r | \leq 4r^2$ and $z' \in B_1(U) \cap \left(\frac{1}{n^2} \BB Z^2 \right)$ such that $|z-z'| \leq 4r^2$. 
By~\eqref{eqn-circle-avg-cont} with $r' = 1/n$, 
\eqb  \label{eqn-circle-avg-compare}
|h_r(z) - h_{1/n}(z')| \leq \frac{(8r^2)^{99/200}}{r^{1/2}}  \leq C r^{49/100} ,
\eqe 
where $C$ is a universal constant. By our choice of $z'$, if $\ep$ is sufficiently small then $   B_{7 /(8n)}(z') \setminus  B_{5 /(8 n) }(z') \subset B_r(z) \setminus B_{r/2}(z)$. By~\eqref{eqn-annulus-union} with $z'$ in place of $z$, followed by~\eqref{eqn-circle-avg-compare}, we obtain 
\alb
D_h\left( \bdy B_r(z) , \bdy B_{r/2}(z) \right)
\geq D_h\left( \bdy B_{7 /(8n)}(z') , \bdy B_{5 /(8 n) }(z') \right)
&\geq n^{-(\xi Q +\zeta/2) } e^{\xi h_{1/n}(z')} \notag \\
&\geq 2 r^{\xi Q + \zeta/2} e^{-C \xi r^{49/100}} e^{\xi h_r(z)} , 
\ale
which is at least $r^{\xi Q + \zeta} e^{\xi h_r(z)}$ provided $\ep$ (and hence also $r$) is made to be sufficiently small. 
\end{proof}

\begin{proof}[Proof of Theorem~\ref{thm-kpz-independent-thick}, lower bound]
As noted above, we can assume without loss of generality that $h$ is a whole-plane GFF and we can restrict attention to Borel sets $X$ which are contained in some deterministic bounded open subset $U$ of $\BB C$. 
Let $\zeta > 0$ be a small parameter which we will eventually send to zero, and for each Borel set $X \subset U$ define $X^{\alpha,\zeta}$ as in Lemma~\ref{lem-thick-subset}.
  
We claim that with $m_{\alpha,\zeta}$ as in~\eqref{mM}, it is a.s.\ the case that for every Borel set $X\subset U$ and every 
$\ell \in \left( \dim_{\mcl H}^\gamma X^{\alpha,\zeta} , d_\gamma \right]$,
\eqb
(\xi (Q - m_{\alpha,\zeta}) +  \zeta) \ell  \geq \dim_{\mcl H}^0 X^{\alpha,\zeta} .
\label{eqn-ell-result2}
\eqe
Once~\eqref{eqn-ell-result2} is established, we send $\ell \rta \dim_{\mcl H}^\gamma X^{\alpha,\zeta} $ and then send $\zeta \rta 0$ and recall that $\dim_{\mcl H}^0 X^{\alpha,\zeta} \geq \dim_{\mcl H}^0\left(X\cap \mcl T_h^\alpha\right) - \zeta$ to obtain~\eqref{eqn-thick-lower}.

We will now prove~\eqref{eqn-ell-result2}.  
For $\wt r > 0$ and $z\in\BB C$, write $\wt B_{\wt r}(z)$ for the $D_h$-metric ball of radius $\wt r$ centered at $z$. 
By the definition of quantum dimension, for any Borel set $X\subset U$ and any $\ell \in \left( \dim_{\mcl H}^\gamma X^{\alpha,\zeta} , d_\gamma \right]$ the following is true: for each $\delta > 0$, we can choose a covering of $X^{\alpha,\zeta}$ by $D_h$-metric balls $\{\wt B_{\wt r_i}(z_i)\}_{i \in \BB N}$ whose $D_h$-radii satisfy
\eqb
\sum_{i \in \BB N} \wt r_i^\ell < \delta .
\label{eqn-quantum-cond}
\eqe 
For each $i\in\BB N$, define the Euclidean radius
\eqb \label{eqn-euclidean-rad}
r_i:= \sup_{u \in   \wt B_{\wt r_i}(z_i)} |u-z_i| .
\eqe
We will show that it is a.s.\ the case that, for each $\ep > 0$ and each Borel set $X\subset U$, there exists $\delta = \delta(\ep , X) > 0$ such that for any covering $\{\wt B_{\wt r_i}(z_i)\}_{i \in \BB N}$ of $X^{\alpha,\zeta}$ by quantum balls satisfying~\eqref{eqn-quantum-cond},
\eqb
 \sum_{i \in \BB N} r_i^{(\xi (Q - m_{\alpha,\zeta}) - \zeta) \ell} < \ep . 
\label{eqn-euclidean-tail}
\eqe 
We will then send $\ep \rta 0$ to obtain~\eqref{eqn-ell-result2}. 

Roughly speaking, our strategy for proving~\eqref{eqn-euclidean-tail} is as follows.  By the definition~\eqref{eqn-euclidean-rad} of $r_i$,
\eqb
D_h\left( \bdy B_{r_i}(z_i), \bdy B_{r_i/2}(z_i) \right) \leq \wt r_i \qquad \forall i \in \BB N .
\label{eqn-by-def}
\eqe
Moreover, Lemma~\ref{lem-annulus-dist-uniform} allows us to bound from below the quantum distances $D_h( B_{r_i}(z), B_{r_i/2}(z))$ for $i \in \BB N$ in terms of $r_i$ and $h_{r_i}(z_i)$.  Finally, to convert this bound to an upper bound on $r_i$ in terms of $\wt r_i$, we use Lemma~\ref{lem-circle-avg-bound} to lower-bound $h_{r_i}(z_i)$.  The application of Lemma~\ref{lem-circle-avg-bound} is slightly more subtle here than in the proof of the upper bound in the previous subsection, since here we know a priori that the centers $z_i$ for $i \in  \BB N$ are close to points in $X^{\alpha,\zeta}$ in the $D_h$ metric, not in the Euclidean metric. This is not a problem, however, since the Euclidean metric is a.s. locally bi-H\"older with respect to $D_h$~\cite[Theorem 1.7]{lqg-metric-estimates}, so pairs of points which are close w.r.t.\ $D_h$ are also close w.r.t.\ the Euclidean metric, in a sense which is sufficiently strong for our purposes. 

Having described our strategy for proving~\eqref{eqn-euclidean-tail}, we now proceed with the proof itself. 
By~\eqref{eqn-quantum-cond}, we have $\wt r_i <\delta^{1/\ell}$ for each $i \in \BB N$.  As noted in Remark~\ref{remark-holder},~\cite[Theorem 1.7]{lqg-metric-estimates} asserts that the identity map from $(U,D_h)$ to $U$, equipped with the Euclidean metric is a.s. locally H\"older continuous with any exponent $\chi <\xi^{-1}(Q+2)^{-1}$.  Thus, if we fix such a $\chi$, then a.s.\ for each  $\delta > 0$ smaller than some threshold independent of which set $X$ we consider,
\[
r_i < \wt r_i^\chi < \delta^{\chi/\ell} \qquad \forall i \in  \BB N .
\]
By Lemma~\ref{lem-circle-avg-bound}, this implies that a.s.\ for each $\delta > 0$ smaller than some threshold (that may depend on $X$), 
\eqb
\frac{h_{r_i}(z)}{ \log r_i^{-1}} \geq m_{\alpha,\zeta}   \qquad \forall i \in \BB N.
\label{eqn-circle-avg-control-I-alpha-zeta}
\eqe
By Lemma~\ref{lem-annulus-dist-uniform}, it is a.s.\ the case that for each small enough $\delta > 0$,
\[
  D_h\left( \bdy B_{r_i}(z), \bdy B_{r_i/2}(z) \right)   \geq r_i^{\xi Q +\zeta} e^{\xi h_{r_i}(z_i)}   \qquad \forall i \in \BB N .
\]
Applying~\eqref{eqn-by-def}, we deduce that it is a.s.\ the case that for each small enough $\delta >0$, 
\[
  \wt r_i   \geq r_i^{\xi Q +\zeta} e^{\xi h_{r_i}(z_i)}   \qquad \forall i \in \BB N .
\]
By~\eqref{eqn-circle-avg-control-I-alpha-zeta}, this implies that a.s.\ for each small enough $\delta > 0$,
\[
r_i^{\xi Q + \zeta - \xi m_{\alpha,\zeta}} \leq    r_i^{\xi Q +\zeta} e^{\xi h_{r_i}(z_i)} \leq \wt r_i \qquad \forall i \in  \BB N 
\]
and therefore
\[
\sum_{i \in \BB N} r_i^{(\xi (Q - m_{\alpha,\zeta}) + \zeta) \ell} \leq \sum_{i \in \BB N} \wt r_i^\ell < \delta < \ep.
\]
This proves~\eqref{eqn-euclidean-tail}. Sending $\ep\rta 0$ now gives~\eqref{eqn-ell-result2}, which, as explained above, concludes the proof.
\end{proof}

\section{Proof of the KPZ formula for sets independent from $h$}
\label{sec-kpz-independent}

In this section we will prove Theorem~\ref{thm-kpz-independent}.
As in Section~\ref{sec-ind-upper-thick}, we can assume without loss of generality that $X$ is bounded and deterministic and that $h$ is a whole-plane GFF normalized so that $h_1(0) = 0$.
The proof of the upper bound for $\dim_{\mcl H}^0 X$ is an immediate consequence of  the lower bound on $\dim_{\mcl H}^\gamma X \cap \mcl T_h^{\alpha}$ in Theorem~\ref{thm-kpz-independent-thick} proved in Section~\ref{sec-ind-lower-thick}:

\begin{proof}[Proof of Theorem~\ref{thm-kpz-independent}, upper bound]
The lower bound on $\dim_{\mcl H}^\gamma\left( X \cap \mcl T_h^{\alpha} \right)$ in Theorem~\ref{thm-kpz-independent-thick}  is maximized when $\alpha = Q - \sqrt{Q^2 - 2 \dim_{\mcl H}^0 X}$, in which case we have
\eqb
\dim_{\mcl H}^{\gamma} \left( X \cap \mcl T_h^\alpha \right) \geq \frac{1}{\xi} \left( Q - \sqrt{Q^2 - 2 \dim_{\mcl H}^0 X} \right).
\label{laststep}
\eqe
Since $\dim_{\mcl H}^{\gamma} \left( X \cap \mcl T_h^\alpha \right) \leq \dim_{\mcl H}^{\gamma} X$, this gives the desired upper bound for $\dim_{\mcl H}^0 X$. 
\end{proof}

To prove the lower bound for $\dim_{\mcl H}^0 X$, we begin by eliminating the $ e^{-\xi h_{2r}(z)}$ term in the bound of  Lemma~\ref{lem-moment0} to obtain a bound on moments of the quantum diameter of a Euclidean ball.

\begin{lem} \label{lem-diam-moment}
Let $p \in \left( 0 , 4d_\gamma/\gamma^2 \right)$. For each compact set $K\subset \BB C$, each $z\in K$,  
\eqb \label{eqn-diam-moment}
\BB E\left[ \left( \sup_{u,v\in B_r(z)} D_h(u,v) \right)^p \right] \leq r^{\xi Q p - \xi^2 p^2 /2 + o_r(1)} \quad \text{as $r\rta 0$},
\eqe
with the rate of the $o_r(1)$ depending only on $p$ and $K$. 
\end{lem}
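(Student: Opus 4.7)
The plan is to deduce Lemma~\ref{lem-diam-moment} from Lemma~\ref{lem-moment0} in essentially two moves: first, replace $D_h$ by the $D_h$-internal metric on $B_{2r}(z)$; and second, strip off the circle-average factor $e^{\xi h_{2r}(z)}$ using independence.

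For the first move, since any $D_h$-path in $\BB C$ has length no greater than a path constrained to $B_{2r}(z)$, one has the trivial inequality $D_h(u,v) \leq D_h(u,v; B_{2r}(z))$ for all $u,v \in B_r(z)$. Hence the expectation we want to bound is at most $\BB E\big[(\sup_{u,v \in B_r(z)} D_h(u,v; B_{2r}(z)))^p\big]$.

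For the second move, I would insert $e^{\xi h_{2r}(z)} \cdot e^{-\xi h_{2r}(z)}$ inside the supremum and factor the $p$-th power:
\begin{equation*}
\BB E\!\left[\left(\sup_{u,v \in B_r(z)} D_h(u,v; B_{2r}(z))\right)^{\!p}\right]
= \BB E\!\left[e^{p\xi h_{2r}(z)} \left(e^{-\xi h_{2r}(z)} \sup_{u,v \in B_r(z)} D_h(u,v; B_{2r}(z))\right)^{\!p}\right].
\end{equation*}
By Weyl scaling (Axiom~\ref{item-metric-f}) the bracketed expression equals $\sup_{u,v \in B_r(z)} D_{h - h_{2r}(z)}(u,v; B_{2r}(z))$, and by locality (Axiom~\ref{item-metric-local}) it is a measurable function of $(h - h_{2r}(z))|_{B_{2r}(z)}$. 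Since the circle average $h_{2r}(z)$ is independent of $(h - h_{2r}(z))|_{B_{2r}(z)}$ (standard; see~\cite[Section 3.1]{shef-kpz}), the two factors in the expectation are independent.

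Applying Lemma~\ref{lem-moment0} to the second factor gives $O_r(r^{\xi Q p})$. For the first factor, since $h$ is the whole-plane GFF normalized by $h_1(0) = 0$, the random variable $h_{2r}(z)$ is centered Gaussian with variance $\log(1/(2r)) + O_K(1)$ uniformly in $z \in K$, so a direct computation gives $\BB E[e^{p\xi h_{2r}(z)}] = \exp\!\big(\tfrac{p^2 \xi^2}{2}(\log(1/(2r)) + O_K(1))\big) = r^{-\xi^2 p^2/2 + o_r(1)}$, with $o_r(1)$ uniform in $z\in K$. Multiplying the two factors gives the claimed $r^{\xi Q p - \xi^2 p^2/2 + o_r(1)}$. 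There is no substantive obstacle here: the entire argument is a short computation once Lemma~\ref{lem-moment0} is in hand, and the only point requiring even mild care is verifying that the factorization by independence is legitimate, which is immediate from the combination of Axioms~\ref{item-metric-local} and~\ref{item-metric-f}.
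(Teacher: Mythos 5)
Your overall strategy is the same as the paper's (reduce to Lemma~\ref{lem-moment0}, strip off a circle-average factor by independence, then compute a Gaussian exponential moment), but the factorization step — the one you describe as immediate — is exactly where there is a genuine gap. You factor $\BB E\big[e^{p\xi h_{2r}(z)}\big(e^{-\xi h_{2r}(z)}\sup_{u,v}D_h(u,v;B_{2r}(z))\big)^p\big]$ into a product of expectations on the grounds that $h_{2r}(z)$ is independent of $(h-h_{2r}(z))|_{B_{2r}(z)}$. For the whole-plane GFF normalized so that $h_1(0)=0$, this exact independence holds when the circle average is taken about the same center as the normalization (e.g.\ $z=0$, or more generally when the relevant balls lie in the unit disk), which is the form in which the fact is usually quoted; it fails for general $z$ in an arbitrary compact $K\subset\BB C$. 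Indeed, the covariance of the normalized field is $G(x,y)=\log\frac{1}{|x-y|}+\log\max(|x|,1)+\log\max(|y|,1)$, and for $x\in B_{2r}(z)$ one computes $\operatorname{Cov}\big(h_{2r}(z),\,h(x)-h_{2r}(z)\big)=\log\max(|x|,1)-\operatorname{avg}_{w\in\bdy B_{2r}(z)}\log\max(|w|,1)$, which is nonzero (though of order $r$) as soon as, say, $|z|>1$. So the identity $\BB E[AB]=\BB E[A]\,\BB E[B]$ you invoke is not justified as stated; what is genuinely independent of $(h-h_{2r}(z))|_{B_{2r}(z)}$ is the radial increment $h_1(z)-h_{2r}(z)$ of the circle-average process about the \emph{same} center $z$ (the measurability part of your argument, via Axioms~\ref{item-metric-local} and~\ref{item-metric-f}, is fine).

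This is precisely why the paper's proof takes a slightly more roundabout route: it factors out $e^{-\xi h_1(z)}$ rather than $e^{-\xi h_{2r}(z)}$, uses independence of the increment $h_{2r}(z)-h_1(z)$ (centered Gaussian with variance $\log r^{-1}-\log 2$) from the recentered field in the ball together with Lemma~\ref{lem-moment0}, and is then left with the bound~\eqref{eqn-diam-moment} with an extra factor $e^{-\xi h_1(z)}$ inside the expectation. Since $\operatorname{Var}h_1(z)=O_K(1)$ uniformly over $z\in K$, this leftover factor is removed by H\"older's inequality with an exponent $q>1$ chosen so that $qp<4d_\gamma/\gamma^2$ (possible because the hypothesis on $p$ is an open condition), which costs a factor $q$ in the $\xi^2p^2/2$ term; sending $q\rta 1$ slowly as $r\rta 0$ absorbs this loss into the $o_r(1)$. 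Your argument can be repaired in exactly this way (or by first recentering, $h(\cdot+z)-h_{2r}(z)\eqD h-h_{2r}(0)$ as in the proof of Lemma~\ref{lem-moment0}, and then explicitly accounting for the change of normalization), but as written the key factorization is unjustified, and the fix is not purely cosmetic since the lemma requires uniformity over all $z$ in an arbitrary compact set.
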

\begin{proof}
For $z\in\BB C$, the random variable $h_1(z) - h_{2r}(z)$ is centered Gaussian with variance $\log\ep^{-1}   - \log 2$ and is independent from $(h-h_{2r}(z))|_{B_{2r}(z)}$~\cite[Section 3.1]{shef-kpz}. 
By Axioms~\ref{item-metric-local} and~\ref{item-metric-f}, the internal metric
\eqbn
e^{-\xi h_{2r}(z)} D_h\left( u,v ; B_{2r}(z) \right) 
= D_{h-h_{2r}(z)} \left( u,v ; B_{2r}(z) \right) 
\eqen
is a.s.\ determined by $(h-h_{2r}(z))|_{B_{2r}(z)}$, so is also independent from $h_1(z) - h_{2r}(z)$. 

By Lemma~\ref{lem-moment0} together with the formula $\BB E[e^Z] = e^{\op{Var}(Z)/2}$ for a Gaussian random variable $Z$, 
\allb \label{eqn-diam-moment-scaled}
& \BB E\left[ \left( e^{-\xi h_1(z)} \sup_{u,v\in B_r(z)}  D_h\left( u, v ; B_{2r}(0)   \right) \right)^p \right]  \notag\\
&\qquad= \BB E\left[ e^{\xi p (h_{2r}(z) - h_1(z))} \right] \BB E\left[ \left( e^{-\xi h_{2r}(z)} \sup_{u,v\in B_r(z)} D_h\left( u, v ; B_{2r}(0)   \right) \right)^p \right] \notag \\
&\qquad= O_r\left( r^{\xi Q p - \xi^2 p^2 / 2} \right) .
\alle  

Since $D_h(u,v ;B_{2r}(0)) \geq D_h(u,v)$,~\eqref{eqn-diam-moment-scaled} implies~\eqref{eqn-diam-moment} but with an extra factor $e^{-\xi h_1(z)}$ inside the expectation.
To get rid of this factor, we observe that $ h_1(z) $ is centered Gaussian with variance bounded above by a constant depending only on $K$. 
By H\"older's inequality and~\eqref{eqn-diam-moment-scaled}, for any $q  >  1$ such that $q p  < 4d_\gamma/\gamma^2 $, 
\allb \label{eqn-diam-moment-holder}
& \BB E\left[ \left(  \sup_{u,v\in B_r(z)}  D_h\left( u, v ; B_{2r}(0)   \right) \right)^p \right] \notag \\
&\qquad \leq \BB E\left[ e^{\xi q h_1(z)/(q-1)} \right]^{1-1/q} \BB E\left[ \left( e^{- \xi h_1(z)}  \sup_{u,v\in B_r(z)}  D_h\left( u, v ; B_{2r}(0)   \right) \right)^{q p} \right]^{1/q}  \notag \\
&\qquad= O_r\left( r^{\xi Q p   - \xi^2 p^2 q / 2} \right) .
\alle  
Since $q$ can be made arbitrarily close to 1, we can now send $q\rta 1$ at a sufficiently slow rate as $r\rta 0$ to get~\eqref{eqn-diam-moment} (note that $D_h(u,v) \leq D_h(u,v ; B_{2r}(z))$).
\end{proof}

 \begin{proof}[Proof of Theorem~\ref{thm-kpz-independent}, lower bound]
Proving the lower bound on $\dim_{\mcl H}^0 X$ in~\eqref{kpz-independent} is equivalent to showing that, if $\ell \in [0,d_\gamma]$ satisfies $\xi Q \ell - \xi^2 \ell^2/2 > \dim_{\mcl H}^0 X$, then $\ell \geq \dim_{\mcl H}^{\gamma}  X$ almost surely.  

Fix such an $\ell$, and let $x  \in \left( \dim_{\mcl H}^0 X , \xi Q \ell - \xi^2 \ell^2/2\right)$.  
By definition of (Euclidean) Hausdorff dimension, for each $\ep > 0$, we can choose a collection $\{z_i\}_{i \in \BB N}$ of points of $X$ and radii $r_i  > 0$ such that the Euclidean balls $\{B_{r_i}(z_i)\}_{i\in \BB N}$ cover $X$ and the $r_i$ satisfy $\sum_{i \in \BB N} r_i^x < \ep$.  Since $X$ is assumed to be bounded and deterministic, we can apply Lemma~\ref{lem-diam-moment} with $p =\ell$ to get that, if $\ep \geq \max_{i \in \BB N} r_i^x$ is sufficiently small, then
\alb
\BB{E} \left[ \sum_{i \in \BB N} \left( \sup_{u,v \in B_{r_i}(z_i)} D_h(u,v) \right)^\ell \right]  
\leq  \sum_{i \in \BB N} r_i^{  \xi Q \ell - \xi^2 \ell^2/2 + o_\ep(1) }
\leq \sum_{i \in \BB N} r_i^x < \ep.
\ale
By Markov's inequality, this implies that for small enough $\ep$, 
\eqbn 
\BB P\left[ \sum_{i \in \BB N} \left( \sup_{u,v  \in B_{r_i}(z_i)} D_h(u,v ) \right)^{\ell}  \leq \sqrt{\ep} \right] \geq 1 - \sqrt \ep .
\eqen
Setting $\ep = 2^{-n}$ and applying the Borel-Cantelli lemma now shows that a.s.\ $\ell \geq \dim_{\mcl H}^{\gamma}  X$, as desired.
\end{proof}

\section{Proof of the worst-case KPZ formula}
\label{sec-kpz-correlated}

We will now prove Theorem~\ref{thm-kpz-correlated}. 
As in previous subsections, we may assume that $U = \BB{C}$ and $h$ a whole-plane GFF normalized to have $h_1(0) = 0$; and we may restrict attention to fractals which are contained in some deterministic compact set (we can no longer assume that $X$ is deterministic, though, since now $X$ is not required to be independent from $h$).
In fact, due to the translation invariance of the law of $h$, modulo additive constant, and Axioms~\ref{item-metric-f} and~\ref{item-metric-coord}, we can assume without loss of generality that this compact set is $[0,1]^2$.  
This will be convenient since we are going to work with dyadic squares. 
Throughout this section, we will denote the side length of a Euclidean square $S$ by $|S|$.

\subsection{Upper bound for quantum dimension}
\label{sec-wc-quantum}
 
For $n\in\BB N$, we let $\mcl D_n$ be the set of $2^{-n}\times 2^{-n}$ squares contained in $[0,1]^2$ with corners in $2^{-n}\BB Z^2$.
The idea of the proof of~\eqref{upper-bound-quantum} is as follows.
We will consider a Borel set $X \subset [0,1]^2$, a number $x >  \dim_{\mcl H}^0 X$, and a number $\ell$ greater than the right side of~\eqref{upper-bound-quantum}. 
We cover $X$ by a collection $\mcl S$ of small dyadic squares for which $\sum_{S\in\mcl S} |S|^x \leq 1$, and we seek to show that $\sum_{S\in\mcl S} (\sup_{u,v\in S} D_h(u,v))^\ell $ is small. 
To do this, we will consider the ``worst-case scenario" where for each large enough $n$, the squares in $\mcl S\cap \mcl D_n$ are the $\lfloor 2^{x n} \rfloor$ squares of $\mcl D_n$ with the \emph{largest} $D_h$-diameters. 
Lemma~\ref{lem-eucl-square-count} just below will allow us to show that the sum of the $D_h$-diameters of these $\lfloor 2^{x n} \rfloor$ largest squares is bounded above by a positive power of $2^{-n}$. Summing over all $n\in\BB N$ will then give the desired bound for $\dim_{\mcl H}^{\gamma} X$. 
 
\begin{lem} \label{lem-eucl-square-count}
For each $s \in (0,\xi Q)$ and each $\zeta \in (0,1)$, there exists $\alpha =\alpha(s,\zeta) > 0$ such that for each $n\in\BB N$, it holds with probability at least $1-O_n(2^{-\alpha n})$ that
\eqb \label{eqn-eucl-square-count}
\#\left\{ S\in\mcl D_n : \sup_{u,v\in S} D_h(u,v) > 2^{-s n} \right\} \leq 2^{ \left(2 -  \frac{(\xi Q - s)^2}{2\xi^2} + \zeta \right) n } .
\eqe
In particular, if $s < \xi(Q-2)$, then there exists $\alpha'=\alpha'(s) > 0$ such that with probability at least $1- O_n(2^{-\alpha' n})$, the set in~\eqref{eqn-eucl-square-count} is empty. 
\end{lem}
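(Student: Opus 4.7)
My plan is to combine the moment bound of Lemma~\ref{lem-diam-moment} with two successive applications of Markov's inequality, optimizing the exponent in the intermediate step. The key feature of the moment bound is the quadratic-in-$p$ exponent $\xi Q p - \xi^2 p^2/2$, which can be tuned against the threshold $2^{-sn}$ to extract the desired Gaussian-type exponent $(\xi Q - s)^2/(2\xi^2)$.

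First, I would fix $s\in (0,\xi Q)$ and $\zeta\in (0,1)$. For each $S\in\mcl D_n$ with center $z_S$, note that $S\subset B_{r_n}(z_S)$ for $r_n := \sqrt 2 \cdot 2^{-n-1}$, so $\sup_{u,v\in S} D_h(u,v) \leq \sup_{u,v\in B_{r_n}(z_S)} D_h(u,v)$. Since the $z_S$ all lie in the compact set $K=[0,1]^2$, Lemma~\ref{lem-diam-moment} yields, uniformly over $S\in\mcl D_n$,
\[
\BB E\!\left[ \left( \sup_{u,v\in S} D_h(u,v) \right)^p \right] \leq 2^{-n(\xi Q p - \xi^2 p^2/2) + o(n)} \qquad \forall p\in (0,4d_\gamma/\gamma^2),
\]
and then Markov's inequality applied at threshold $2^{-sn}$ gives
\[
\BB P\!\left[ \sup_{u,v\in S} D_h(u,v) > 2^{-sn} \right] \leq 2^{-n(\xi Q p - \xi^2 p^2/2 - sp) + o(n)} .
\]
I would then optimize the exponent over $p$. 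The maximizer is $p^* = (\xi Q - s)/\xi^2 \in (0, Q/\xi)$, and the optimized exponent is exactly $(\xi Q - s)^2/(2\xi^2)$. A brief algebraic check shows $Q/\xi = 2d_\gamma/\gamma^2 + d_\gamma/2 < 4d_\gamma/\gamma^2$ for all $\gamma\in (0,2)$, so $p^*$ is always in the admissible moment range of Lemma~\ref{lem-diam-moment}.

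Summing over the $|\mcl D_n|=2^{2n}$ squares, the expected value of the count $N_n$ in~\eqref{eqn-eucl-square-count} satisfies $\BB E[N_n] \leq 2^{n(2 - (\xi Q - s)^2/(2\xi^2) + o(n)/n)}$. A second Markov application then shows $N_n$ exceeds $2^{n(2-(\xi Q - s)^2/(2\xi^2) + \zeta)}$ with probability at most $2^{-n\zeta + o(n)}$, which for large enough $n$ is bounded by $2^{-\alpha n}$ with, say, $\alpha = \zeta/2$; the finitely many exceptional small $n$ get absorbed into the constant implicit in $O_n(\cdot)$. For the ``in particular'' clause, if $s<\xi(Q-2)$ then $\xi Q - s > 2\xi$, so $(\xi Q - s)^2/(2\xi^2) > 2$, and the first-moment bound alone gives $\BB E[N_n] \leq 2^{-\alpha' n}$ for some $\alpha'(s)>0$; Markov's inequality $\BB P[N_n \geq 1] \leq \BB E[N_n]$ then gives the emptiness claim. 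The only non-routine point in the whole argument is verifying that the optimizer $p^*$ lies inside the admissible moment range of Lemma~\ref{lem-diam-moment}, and beyond that the proof is a textbook Markov-plus-optimization computation with no substantive obstacle.
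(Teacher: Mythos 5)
Your argument is correct, and it reaches the same first-moment-method skeleton as the paper (per-square tail bound, then Markov on the count, then the observation that the bound drops below $1$ when $s<\xi(Q-2)$), but it sources the key per-square estimate differently. The paper simply quotes the tail bound $\BB P[\sup_{u,v\in S} D_h(u,v) > 2^{-sn}] \leq 2^{-( (\xi Q-s)^2/(2\xi^2) - o_n(1))n}$ directly from \cite[Lemma 3.19]{lqg-metric-estimates} and then applies Markov's inequality to the count exactly as you do. You instead re-derive this tail bound from the paper's own moment estimate (Lemma~\ref{lem-diam-moment}) via Markov at threshold $2^{-sn}$ and optimization over $p$, with maximizer $p^* = (\xi Q - s)/\xi^2$ and optimal exponent $(\xi Q - s)^2/(2\xi^2)$; your check that $p^* < Q/\xi = 2d_\gamma/\gamma^2 + d_\gamma/2 < 4d_\gamma/\gamma^2$ is the one genuinely non-routine point and it is right, as is your use of the uniformity of the $o_r(1)$ in $z\in[0,1]^2$ for the fixed exponent $p^*$ (Lemma~\ref{lem-diam-moment} provides exactly this, and there is no circularity since that lemma is proved before Section~5 from Lemma~\ref{lem-moment0}). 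What your route buys is self-containedness within the paper: one fewer external citation, at the cost of a slightly longer Chernoff-type computation; what the paper's route buys is brevity, since the quoted external lemma already packages the optimized tail bound. Your handling of the ``in particular'' clause via $\BB P[N_n\geq 1]\leq \BB E[N_n]$ is equivalent to the paper's remark that the right side of~\eqref{eqn-eucl-square-count} is below $1$ for small $\zeta$.
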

\begin{proof}
By~\cite[Lemma 3.19]{lqg-metric-estimates} (applied with $\BB r = 1$ and $K = [0,1]^2$), for each fixed $S\in \mcl D_n$,
\eqb
\BB P\left[\sup_{u,v\in S} D_h(u,v) > 2^{-s n} \right] \leq 2^{-\left(   \frac{(\xi Q - s)^2}{2\xi^2}   - o_n(1) \right) n} . 
\eqe
The estimate~\eqref{eqn-eucl-square-count} follows from this and Markov's inequality. 
The last statement follows since if $s < \xi(Q-2)$, then the right side of~\eqref{eqn-eucl-square-count} is less than 1 for a small enough choice of $\zeta$ (depending only on $s$ and $\gamma$). 
\end{proof}

\begin{proof}[Proof of~\eqref{upper-bound-quantum}] 
\noindent\textit{Step 1: setup.}
Fix a small parameter $\zeta \in (0,1)$ (which we will eventually send to zero) and a partition of $[\xi(Q-2)-\zeta , \xi Q]$ with partition points
\eqb \label{eqn-partition}
\xi(Q-2)-\zeta = s_0 < \dots < s_K = \xi Q \quad \text{with} \quad \max_{k=1,\dots,K} (s_k  -s_{k-1}) \leq \zeta .
\eqe 
We can arrange that the $s_k$'s depend only on $\zeta$. 
Throughout the proof, $o_\zeta(1)$ denotes a deterministic quantity which depends only on $\zeta$ and which converges to zero as $\zeta \rta 0$. 

We partition the set of dyadic squares $\mcl D_n$ according to their quantum diameters by defining
\eqb \label{eqn-quantum-square-partition}
\mcl D_{n,k} 
:=  \left\{ S\in\mcl D_n : \sup_{u,v\in S} D_h(u,v) \in \left[ 2^{-s_k n}  , 2^{-s_{k-1} n} \right] \right\} ,
\quad\forall n \in \BB N , 
\quad \forall k = 1,\dots , K .
\eqe
We also define
\eqb
\mcl D_{n,K+1} :=  \left\{ S\in\mcl D_n : \sup_{u,v\in S} D_h(u,v) < 2^{-s_K n} \right\} .
\eqe

By Lemma~\ref{lem-eucl-square-count}, it holds except on an event of probability decaying like some $\zeta$-dependent positive power of $2^{-n}$ that
\eqb \label{eqn-use-eucl-square-count}
\bigcup_{k=1}^{K+1} \mcl D_{n,k} = \mcl D_n \quad \text{and} \quad
 \# \mcl D_{n,k} \leq 2^{ \left(2 -  \frac{(\xi Q - s_k)^2}{2\xi^2} + \zeta  \right) n } , 
\quad \forall k =1,\dots, K    .
\eqe  
By the Borel-Cantelli lemma, a.s.~\eqref{eqn-use-eucl-square-count} holds for each sufficiently large $n\in\BB N$. 
Henceforth assume that this is the case.
The rest of the argument is purely deterministic. 
\medskip

\noindent\textit{Step 2: reducing to a bound for sums over squares.}
Let $X\subset [0,1]^2$ be a (possibly random) Borel set. If $ \dim_{\mcl H}^0 X \geq 2-\gamma^2/2$, then the right side of~\eqref{upper-bound-quantum} is equal to $d_\gamma$.
Hence we can assume without loss of generality that $\dim_{\mcl H}^0 X < 2-\gamma^2/2$. 
Let $x\in \left( \dim_{\mcl H}^0 X , 2-\gamma^2/2 \right) $ and let
\eqb \label{eqn-eucl-square-ell}
  \ell > \frac{x}{\xi\left( Q - \sqrt{4-2x} \right) } .
\eqe
We will show that $\dim_{\mcl H}^{\gamma}X \leq \ell$. 
Since $x$ can be made as close as we like to $\dim_{\mcl H}^0 X$ and $\ell$ can be made as close as we like to the right side of~\eqref{eqn-eucl-square-ell}, this will imply~\eqref{upper-bound-quantum}. 

For each $n_0 \in \BB N$, there exists a countable collection $\mcl S = \mcl S(n_0)$ of dyadic squares contained in $[0,1]^2$ such that
\eqb \label{eqn-eucl-cover}
X\subset \bigcup_{S \in \mcl S} S , \quad \sum_{S\in\mcl S} |S|^x \leq 1 , \quad \text{and} \quad \max_{S\in\mcl S} |S|\leq 2^{-n_0} .
\eqe
We assume that $n_0$ is large enough that~\eqref{eqn-use-eucl-square-count} holds for each $n\geq n_0$. 
We will show that
\eqb \label{eqn-quantum-diam-sum}
\sum_{S\in\mcl S}\left(   \sup_{u,v\in S} D_h(u,v) \right)^\ell  \rta 0 ,\quad\text{as $n_0\rta\infty $} ,
\eqe
which will imply that $\dim_{\mcl H}^{\gamma} X \leq \ell$, as required. 
\medskip

\noindent\textit{Step 3: bound for the ``worst-case" collection of squares.}
To prove~\eqref{eqn-quantum-diam-sum}, we first observe that $\sum_{n=n_0}^\infty  2^{- x n} \#(\mcl S\cap \mcl D_n)  \leq 1$, which implies that
\eqb \label{eqn-eucl-square-count'}
\#(\mcl S\cap \mcl D_n)  \leq  2^{ x n}  , \quad\forall n\geq n_0 .
\eqe
By~\eqref{eqn-eucl-square-count'}, the largest possible value of the sum~\eqref{eqn-quantum-diam-sum} occurs when for each $n \geq n_0$, the set $\mcl S\cap \mcl D_n$ consists of the $\lfloor 2^{x n} \rfloor$ squares of $\mcl D_n$ with the largest $D_h$-diameters. 

We will now upper-bound the sum of the $\ell$th powers of the $\lfloor 2^{x n} \rfloor$ largest $D_h$-diameters of squares in $\mcl D_n$. 
Recalling the exponent appearing on the right side of~\eqref{eqn-use-eucl-square-count}, let $k(x)$ be the smallest integer in $[0,K]$ for which 
\eqb \label{eqn-s-choice}
    2 -  \frac{(\xi Q - s_{k(x)} )^2}{2\xi^2} + \zeta    > x   , \quad \text{so that} \quad s_{k(x)} = \xi \left( Q   - \sqrt{4 - 2x}  \right) + o_\zeta(1)  .
\eqe 
Since $x < 2$, we have $s_{k(x)} < \xi Q$ (and hence $k(x) < K$) provided $\zeta \in (0,1)$ is chosen to be sufficiently small. 
By~\eqref{eqn-use-eucl-square-count}, each of the $\lfloor 2^{x n}\rfloor$ squares in $\mcl D_n$ with the largest $D_h$-diameters belongs to $\mcl D_{n,k}$ for some $k  \leq  k(x) $. 
Using~\eqref{eqn-use-eucl-square-count} in the second inequality, we thus have
\eqb \label{eqn-wc-lower-sum}
\sum_{S\in \mcl S\cap \mcl D_n} \left(   \sup_{u,v\in S} D_h(u,v) \right)^\ell 
\leq \sum_{k=1}^{k(x)} 2^{- \ell s_{k-1} n } \#  \mcl D_{n,k}  
\leq \sum_{k=1}^{k(x)} 2^{ \left(2 -  \frac{(\xi Q - s_k)^2}{2\xi^2} -   \ell s_k   + o_\zeta(1)  \right)  n }  .
\eqe
\medskip

\noindent\textit{Step 4: bounding the right side of~\eqref{eqn-wc-lower-sum}.}
We want to argue that the right side of~\eqref{eqn-wc-lower-sum} is dominated by the summand for $k = k(x)$. 
To make sure that this is the case we need to be careful about our choice of parameters. 
The function
\eqbn
s \mapsto  2 -  \frac{(\xi Q - s)^2}{2\xi^2} -   \ell s
\eqen
attains a unique maximum value at $s = \xi ( Q - \ell \xi)$, and is increasing for values of $s$ smaller than this maximum. 
Since we have assumed that $x  <2-\gamma^2/2$, if $\ell$ is chosen to be sufficiently close to the right side of~\eqref{eqn-eucl-square-ell} then
\eqbn
\xi \left( Q   - \sqrt{4 - 2x}  \right) < \xi(Q-\ell\xi) .
\eqen
By~\eqref{eqn-s-choice}, this means that $s_{k(x)}  < \xi(Q-\ell \xi)$ for each small enough choice of $\zeta > 0$. 
For such a choice of $\zeta$, the right side of~\eqref{eqn-wc-lower-sum} is bounded above by a $\zeta$-dependent constant times the summand for $k = k(x)$.
By applying~\eqref{eqn-wc-lower-sum} and then~\eqref{eqn-s-choice}, we obtain
\eqb \label{eqn-wc-lower-term}
\sum_{S\in \mcl S \cap \mcl D_n} \left(   \sup_{u,v\in S} D_h(u,v) \right)^\ell 
\leq 2^{ \left( x -   \ell s_{k(x)}   + o_\zeta(1)  \right) n  } 
= 2^{\left( x - \ell \xi \left( Q   - \sqrt{4 - 2x}  \right) + o_\zeta(1) \right) n } .
\eqe
By~\eqref{eqn-eucl-square-ell}, the power of $2^n$ on the right side of~\eqref{eqn-wc-lower-term} is negative for a small enough choice of $\zeta$. 
Hence we can sum~\eqref{eqn-wc-lower-term} over all $n \geq n_0$ and use~\eqref{eqn-eucl-square-ell} to get
\eqb
\sum_{S\in\mcl S} \left(   \sup_{u,v\in S} D_h(u,v) \right)^\ell 
\leq \sum_{n=n_0}^\infty 2^{\left( x - \ell \xi \left( Q   - \sqrt{4 - 2x}  \right) + o_\zeta(1) \right) n } 
\rta 0 ,\quad \text{as $n_0\rta \infty$.}
\eqe
Therefore,~\eqref{eqn-quantum-diam-sum} holds.
\end{proof}

\subsection{Upper bound for Euclidean dimension}
\label{sec-wc-euclidean}

The proof of~\eqref{upper-bound-euclidean} is very similar to the proof of~\eqref{upper-bound-quantum} in the preceding subsection.
We will define for each $m\in\BB N$ a collection $\mcl R_m$ of dyadic squares in $[0,1]^2$ which all have ``$\gamma$-LQG size" approximately $2^{-m}$ in an appropriate sense (but random Euclidean sizes), which will play the same role as $\mcl D_n$ in the proof of~\eqref{upper-bound-quantum}. 

To prove~\eqref{upper-bound-euclidean}, we will consider a Borel set $X \subset [0,1]^2$, a number $\ell >  \dim_{\mcl H}^{\gamma} X$, and a number $x$ greater than the right side of~\eqref{upper-bound-euclidean}. 
We will cover $X$ by a collection $\mcl A$ of small sets for which $\sum_{A\in\mcl A} (\sup_{u,v \in A} D_h(u,v) )^\ell \leq 1$.
The definition of $\mcl R_m$ (see in particular Lemma~\ref{lem-quantum-tiling-contain}) will allow us to find for each $A\in\mcl A$ whose quantum diameter is in $[2^{-m-1}, 2^{-m}]$ a square $S_A \in \mcl R_m$ whose side length is comparable to the Euclidean diameter of $A$.
We will then set $\mcl S  =\{S_A : A\in\mcl A\}$ and show that $\sum_{S\in\mcl S} |S|^x $ is small. 
To do this, we will consider the ``worst-case scenario" where for each large enough $m$, the squares in $\mcl S\cap \mcl R_m$ are the $\lfloor 2^{\ell m+1} \rfloor$ squares of $\mcl R_m$ with the largest Euclidean side lengths. 
Lemma~\ref{lem-quantum-square-count} just below will allow us to show that the sum of the $D_h$-diameters of these $\lfloor 2^{\ell m+1} \rfloor$ largest squares is bounded above by a positive power of $2^{-m}$, and we will conclude by summing over $m$. 

We now define our random dyadic tiling. 
For a dyadic square $S$, we write $S'$ for its dyadic parent, i.e., the unique dyadic square of side length $2|S|$ which contains $S$. 
For $m \in \BB N$, we define the dyadic tiling $\mcl R_m$ of $[0,1]^2$ to be the set of dyadic squares $S\subset [0,1]^2$ such that
\eqb \label{eqn-quantum-tiling}
D_h\left( S , \bdy B_{|S|}(S) \right) \leq 2^{-m} \quad \text{and} \quad 
D_h\left( S' , \bdy B_{|S'|}(S') \right) > 2^{-m}  . 
\eqe
Then $\mcl R_m$ is a tiling of $[0,1]^2$, i.e., the squares in $\mcl R_m$ cover $[0,1]^2$ and intersect only along their boundaries. 
The reason for our particular definition of $\mcl R_m$ is to make the following lemma true. 

\begin{lem} \label{lem-quantum-tiling-contain}
If $S\in \mcl R_m$ and $A\subset \BB C$ is a set of $D_h$-diameter at most $2^{-m}$ which intersects $S$, then $A\subset B_{4|S|}(S )$.
\end{lem}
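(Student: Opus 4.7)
The plan is to exploit directly the two conditions~\eqref{eqn-quantum-tiling} defining membership in $\mcl R_m$. Fix any $w_0 \in A \cap S$, which exists by hypothesis. The $D_h$-diameter bound on $A$ gives $D_h(w_0, w) \leq 2^{-m}$ for every $w \in A$. On the other hand, since $w_0 \in S \subset S'$, the second condition in~\eqref{eqn-quantum-tiling} yields
\eqbn
D_h(w_0, \bdy B_{|S'|}(S')) \geq D_h(S', \bdy B_{|S'|}(S')) > 2^{-m}.
\eqen

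The key step is to argue that every $w \in A$ lies in the closed set $\overline{B_{|S'|}(S')}$. Because $D_h$ is a length metric that induces the Euclidean topology (Axiom~\ref{item-metric-length}), for any $\ep > 0$ there is a continuous path $P$ from $w_0$ to $w$ whose $D_h$-length is at most $2^{-m} + \ep$. If $w$ lay outside $\overline{B_{|S'|}(S')}$, then by continuity $P$ would have to cross $\bdy B_{|S'|}(S')$ at some first time $t$, so the sub-path up to time $t$ would itself be a path from $w_0$ to $\bdy B_{|S'|}(S')$ with $D_h$-length at least $D_h(w_0, \bdy B_{|S'|}(S')) > 2^{-m}$---a contradiction once $\ep$ is chosen small enough.

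It then remains to observe the purely Euclidean inclusion $\overline{B_{|S'|}(S')} = \overline{B_{2|S|}(S')} \subset B_{4|S|}(S)$. This is elementary: $S$ is one of the four dyadic children of $S'$, so every point of $S'$ lies at Euclidean distance at most $\sqrt{2}\,|S|$ from $S$ (the worst case being attained at the corner of $S'$ opposite to $S$). Consequently any point within Euclidean distance $2|S|$ of $\overline{S'}$ is within $(2+\sqrt{2})|S| < 4|S|$ of $S$, which gives $w \in B_{4|S|}(S)$ for every $w \in A$.

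The argument is essentially a one-step crossing lemma; there is no serious obstacle, only the small subtlety of converting the diameter bound into a statement about crossings of $\bdy B_{|S'|}(S')$, which is handled by the length-space axiom together with the compatibility of $D_h$ with the Euclidean topology. Note that no properties of the $\gamma$-LQG metric beyond Axiom~\ref{item-metric-length} are used here; the lemma is really a tautology given how $\mcl R_m$ is defined, and this is precisely why the definition~\eqref{eqn-quantum-tiling} was chosen.
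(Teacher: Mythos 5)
Your proof is correct and takes essentially the same route as the paper: both use the parent-square condition in~\eqref{eqn-quantum-tiling} together with the $D_h$-diameter bound to confine $A$ to the neighborhood $B_{|S'|}(S')$, and then conclude by the elementary Euclidean inclusion $B_{|S'|}(S')\subset B_{4|S|}(S)$. Your explicit crossing argument is just an unpacked version of the paper's one-line identity $D_h\left(S',\BB C\setminus B_{|S'|}(S')\right)=D_h\left(S',\bdy B_{|S'|}(S')\right)>2^{-m}$.
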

\begin{proof}
The set $A$ must intersect $S'$, so since $ D_h\left( S' , \BB C\setminus  B_{|S'|}(S') \right)  = D_h\left( S' , \bdy B_{|S'|}(S') \right) > 2^{-m}$ we must have $A\subset B_{|S'|}(S') \subset B_{4|S|}(S)$.
\end{proof}

\begin{lem} \label{lem-quantum-square-count}
For each $t \in ( 0  ,  1/(\xi Q))     $ and each $\zeta \in (0,1)$, there exists $\alpha =\alpha(t,\zeta) > 0$ such that for each $m \in\BB N$, it holds with probability at least $1-O_m(2^{-\alpha m})$ that
\eqb \label{eqn-quantum-square-count}
\#\left\{ S\in\mcl R_m : |S| > 2^{-t m} \right\} \leq 2^{ \left( 2 t - \frac{(1 - t\xi Q)^2}{2\xi^2 t}      + \zeta \right) m } .
\eqe 
In particular, if $t < \xi^{-1}(Q+2)^{-1}$, then there exists $\alpha' = \alpha'(s) >0$ such that for each $m\in \BB N$, it holds with probability at least $1-O_m(2^{-\alpha m})$ that the set in~\eqref{eqn-quantum-square-count} is empty. 
\end{lem}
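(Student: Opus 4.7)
The plan is to follow the structure of the proof of Lemma~\ref{lem-eucl-square-count}, but with the upper-tail bound on $D_h$-distances replaced by a lower-tail bound. Since every $S \in \mcl R_m$ is a dyadic subsquare of $[0,1]^2$ and, by the defining condition~\eqref{eqn-quantum-tiling}, satisfies $D_h(S,\bdy B_{|S|}(S)) \leq 2^{-m}$, I get the decomposition
$$\#\{S \in \mcl R_m : |S| > 2^{-tm}\} \;\leq\; \sum_{n=0}^{\lfloor tm\rfloor} \#\!\left\{S \in \mcl D_n : D_h\bigl(S,\bdy B_{|S|}(S)\bigr) \leq 2^{-m}\right\}.$$

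The key input I will need is a lower-tail bound, analogous to the upper-tail bound~\cite[Lemma 3.19]{lqg-metric-estimates} used in the proof of Lemma~\ref{lem-eucl-square-count}: for each fixed $S \in \mcl D_n$ with $n < m/(\xi Q)$,
$$\BB P\!\left[D_h(S,\bdy B_{|S|}(S)) \leq 2^{-m}\right] \;\leq\; 2^{-\frac{(m - n\xi Q)^2}{2\xi^2 n} + o_n(1)\cdot n}.$$
Heuristically, $D_h$-distances at scale $|S| = 2^{-n}$ behave like $|S|^{\xi Q} e^{\xi h_{|S|}(z_S)}$, so the event $\{D_h < 2^{-m}\}$ reduces to a Gaussian lower-tail event for the circle average $h_{|S|}(z_S)$, which has variance $n \log 2$; the Gaussian tail then produces the displayed bound. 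I will invoke such an estimate as a black box from~\cite{lqg-metric-estimates} (it should be provable by essentially the same circle-average-based argument used for the upper-tail bound there).

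Combining this with $\#\mcl D_n = 2^{2n}$ and Markov's inequality gives that with probability at least $1 - O_m(2^{-\alpha m})$,
$$\#\left\{S \in \mcl D_n : D_h(S,\bdy B_{|S|}(S)) \leq 2^{-m}\right\} \leq 2^{\left(2n - \frac{(m - n\xi Q)^2}{2\xi^2 n} + \zeta m / 2 \right)}$$
for each $n \in [0, \lfloor tm\rfloor]_{\BB Z}$ simultaneously (union-bounded over $n$). A direct calculation---the derivative of $(m - n\xi Q)^2/n$ equals $-(m - n\xi Q)(m + n\xi Q)/n^2$, which is negative on $(0,m/(\xi Q))$---shows that $n \mapsto 2n - (m - n\xi Q)^2/(2\xi^2 n)$ is monotone increasing on $(0, m/(\xi Q))$. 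Under the hypothesis $t < 1/(\xi Q)$ its supremum over $n \in (0, tm]$ is therefore attained at $n = tm$, yielding the value $m\bigl(2t - (1 - t\xi Q)^2/(2\xi^2 t)\bigr)$. Summing over $n \in [0, \lfloor tm\rfloor]_{\BB Z}$ costs only a factor of $tm+1$, which is absorbed into the $\zeta m$ slack. This establishes~\eqref{eqn-quantum-square-count}.

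For the last statement, some algebra shows that $t < \xi^{-1}(Q+2)^{-1}$ is equivalent to the inequality $4\xi^2 t^2 < (1 - t\xi Q)^2$, and hence to $2t - (1 - t\xi Q)^2/(2\xi^2 t) < 0$. For such $t$, choosing $\zeta$ sufficiently small makes the exponent on the right-hand side of~\eqref{eqn-quantum-square-count} a negative multiple of $m$; since the cardinality is integer-valued, the same Markov argument forces the set to be empty with probability at least $1 - 2^{-\alpha' m}$. The main obstacle will be verifying the lower-tail estimate on $D_h(S,\bdy B_{|S|}(S))$; lower tails for LQG distances are typically more delicate than upper tails, but the required bound is the natural counterpart of the one already used in the paper and is exactly what is needed to match the stated exponent in~\eqref{eqn-quantum-square-count}.
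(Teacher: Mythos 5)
Your proposal is correct and takes essentially the same route as the paper: both decompose over the dyadic scales $n \le \lfloor t m\rfloor$, bound $\BB P\left[ D_h\left(S , \bdy B_{|S|}(S)\right) \le 2^{-m} \right]$ via the circle-average Gaussian lower-tail estimate (the paper gets precisely this input from the proof of \cite[Lemma 3.21]{lqg-metric-estimates}, applied at scale $\lfloor t m\rfloor$ and extended to coarser scales using Axiom~\ref{item-metric-coord}), and then conclude by Markov's inequality, with the final assertion following because $t < \xi^{-1}(Q+2)^{-1}$ makes the exponent in~\eqref{eqn-quantum-square-count} negative. The only cosmetic differences are that the paper uses a single uniform probability bound (its value at $n = \lfloor t m \rfloor$) for all coarser scales and applies Markov once to the expected total count, whereas you keep the $n$-dependent exponent and argue by its monotonicity in $n$ (which also forces a trivial separate remark for $n=0$, where your formula degenerates).
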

\begin{proof}
The same proof as in~\cite[Lemma 3.21]{lqg-metric-estimates} (applied with $\BB r  = 1$, $\ep = 2^{-\lfloor t m \rfloor}$, and $s = 1/t$) shows that for $S \in \mcl D_{\lfloor t m \rfloor}$, 
\eqb \label{eqn-quantum-square-prob}
\BB P\left[ S\in \mcl R_m \right]
\leq \BB P\left[ D_h\left( S , \bdy B_{|S|}(S) \right) \leq 2^{-m} \right]
\leq 2^{-m\left( \frac{ (1 - t \xi Q)^2}{2\xi^2 t} + o_m(1) \right)} .    
\eqe
Using~\eqref{eqn-quantum-square-prob} and Axiom~\ref{item-metric-coord}, it is easily seen that~\eqref{eqn-quantum-square-prob} still holds if $n \in [0, t m ]_{\BB Z}$ and $S\in\mcl D_n$.  
Summing~\eqref{eqn-quantum-square-prob} over all $ n= 0,\dots, \lfloor t m\rfloor$ and all $S\in\mcl D_n$ gives
\eqb
\BB E\left[ \#\left\{ S\in\mcl R_m : |S| > 2^{-t m} \right\} \right] \leq  2^{  \left( 2 t - \frac{(1 - t\xi Q)^2}{2\xi^2 t} + o_m(1) \right) m} .
\eqe
The bound~\eqref{eqn-quantum-square-count} follows from this and Markov's inequality.
The last statement follows since if $t < \xi^{-1}(Q+2)^{-1}$, then the exponent on the right side of~\eqref{eqn-quantum-square-count} is negative for a small enough choice of $\zeta$ (depending on $t$). 
\end{proof}

\begin{proof}[Proof of~\eqref{upper-bound-euclidean}.]
\noindent\textit{Step 1: setup.}
Fix a small parameter $\zeta \in (0,1)$ (which we will eventually send to zero) and a partition of $[\xi^{-1}(Q+2)^{-1} -\zeta , 1/(\xi Q)]$ 
\eqb \label{eqn-partition'}
\xi^{-1}(Q+2)^{-1} -\zeta = t_0 < \dots < t_K = 1/(\xi Q) \quad \text{with} \quad \max_{k=1,\dots,K} (t_k  -t_{k-1}) \leq \zeta .
\eqe 
We can arrange that the $t_k$'s depend only on $\zeta$. 
Throughout the proof, $o_\zeta(1)$ denotes a deterministic quantity which depends only on $\zeta$ and $\gamma$ and which converges to zero as $\zeta \rta 0$. 

For $m\in\BB N$, define the random dyadic tiling $\mcl R_m$ as in~\eqref{eqn-quantum-tiling}. 
We partition the set of dyadic squares in $\mcl R_{m}$ according to their quantum diameters by defining
\eqb \label{eqn-eucl-square-partition}
\mcl R_{m , k}
:=  \left\{ S\in\mcl R_{m} : |S| \in \left[ 2^{-t_k m}  , 2^{-t_{k-1} m} \right] \right\} ,
\quad\forall m \in \BB N, 
\quad \forall k = 1,\dots , K .
\eqe
We also define
\eqbn
\mcl R_{m , K+1}
:=  \left\{ S\in\mcl R_{m} : |S|  <  2^{-t_K m} \right\}  .
\eqen

By Lemma~\ref{lem-eucl-square-count}, it holds except on an event of probability decaying like some $\zeta$-dependent positive power of $2^{-m}$ that
\eqb \label{eqn-use-quantum-square-count} 
\bigcup_{k=1}^{K+1} \mcl R_{m,k} =\mcl R_m \quad \text{and} \quad
 \# \mcl R_{m ,k} \leq 2^{ \left(2 t_k -  \frac{(1  - t_k \xi Q)^2}{2\xi^2 t_k} + \zeta  \right) m } , 
\quad \forall k =1,\dots, K   .
\eqe 
By the Borel-Cantelli lemma, it is a.s.\ the case that~\eqref{eqn-use-quantum-square-count} holds for each sufficiently large $m \in\BB N$. 
Henceforth assume that this is the case.
The rest of the argument is purely deterministic. 
\medskip

\noindent\textit{Step 2: reducing to a bound for sums over squares.}
Let $X\subset [0,1]^2$ be a (possibly random) Borel set.  
We can assume without loss of generality that $\dim_{\mcl H}^{\gamma}X < 2/(\xi Q)$, for otherwise the right side of~\eqref{upper-bound-euclidean} equals 2, so~\eqref{upper-bound-euclidean} holds vacuously.
Let $\ell \in \left(  \dim_{\mcl H}^{\gamma} X , 2/(\xi Q) \right)$ and let $x  \in (0,2)$ be chosen so that
\eqb \label{eqn-quantum-square-x}
x > \ell \xi\left( Q - \ell  \xi  + \sqrt{4 - 2Q \ell \xi  + \ell^2 \xi^2} \right) .
\eqe
We will show that $\dim_{\mcl H}^0 X\leq x$. 
Since $\ell$ can be made arbitrarily close to $\dim_{\mcl H}^0 X$ and $x \in (0,2)$ was chosen arbitrarily subject to the constraint~\eqref{eqn-quantum-square-x}, this implies~\eqref{upper-bound-euclidean}. 

For each $m_0 \in \BB N$, there exists a countable collection $\mcl A = \mcl A(m_0)$ of Borel sets $A$ which intersect $[0,1]^2$ (which we can take to be $D_h$-metric balls) such that
\eqb \label{eqn-quantum-cover}
X\subset \bigcup_{A \in \mcl A } A , \quad \sum_{A\in\mcl A} \left( \sup_{u,v\in A} D_h(u,v) \right)^\ell \leq 1 , \quad \text{and} \quad \sup_{A\in\mcl A} \sup_{u,v\in A} D_h(u,v) \leq 2^{-m_0} .
\eqe
We assume that $m_0$ is large enough that~\eqref{eqn-use-quantum-square-count} holds for each $m\geq m_0$. 

For each $A \in \mcl A$, let $m_A \geq m_0$ be chosen so that $2^{-m_A - 1} \leq \sup_{u,v\in A} D_h(u,v)  \leq 2^{-m_A}$ and let $S_A \in \mcl R_{m_A}$ be chosen so that $A\cap S_A\not=\emptyset$. By Lemma~\ref{lem-quantum-tiling-contain}, $A\subset B_{4|S_A|}(S_A)$. 
Let
\eqbn
\mcl S := \left\{S_A : A\in\mcl A \right\} .
\eqen
Then the Euclidean neighborhoods $B_{4|S|}(S)$ for $S\in\mcl S$ cover $X$.
The Euclidean diameter of each of these neighborhoods is at most a universal constant times $|S|$. 
We claim that if $\zeta$ is chosen to be sufficiently small (depending on $\ell$ and $x$), then
\eqb \label{eqn-eucl-diam-sum}
\sum_{S\in\mcl S} |S|^x  \rta 0 ,\quad\text{as $m_0\rta\infty $} .
\eqe
This will imply that $\dim_{\mcl H}^0 X \leq x$. 
\medskip

\noindent\textit{Step 3: bound for the ``worst-case" collection of squares.}
By the above definition of $S$, if $S\in\mcl S\cap \mcl R_m$ for some $m\geq m_0$ then $S = S_A$ for some $A\in\mcl A$ with $2^{-m_A - 1} \leq \sup_{u,v\in A} D_h(u,v)  \leq 2^{-m_A}$. 
By~\eqref{eqn-quantum-cover}, 
\eqbn
\sum_{m=m_0}^\infty 2^{-\ell m-1} \#(\mcl S\cap \mcl R_m) \leq  \sum_{A\in\mcl A} \left( \sup_{u,v\in A} D_h(u,v) \right)^\ell   \leq 1 .
\eqen 
Therefore,
\eqb \label{eqn-quantum-square-count'}
\#(\mcl S\cap \mcl R_m) \leq  2^{ \ell m + 1 }  , \quad\forall m\geq m_0 .
\eqe
By~\eqref{eqn-quantum-square-count'}, the largest possible value of the sum~\eqref{eqn-eucl-diam-sum} occurs when for each $m \geq m_0$, the set $\mcl S\cap \mcl R_m$ consists of the $\lfloor 2^{\ell m+1} \rfloor$ squares of $\mcl R_m$ with the largest Euclidean side lengths. 

We will now upper-bound the sum of the $x$th powers of the $\lfloor 2^{\ell m+1} \rfloor$ largest side lengths of squares in $\mcl R_{m}$. 
To this end, let $k(\ell)$ be the smallest integer in $[0,K]$ for which 
\eqb \label{eqn-t-choice}
    2 t_{k(\ell)} -  \frac{(  1  - t_{k(\ell)} \xi Q )^2}{2\xi^2 t_{k(\ell)}} + \zeta    > \ell  
    ,\quad \text{so that} \quad t_{k(\ell)} = \frac{1}{\xi\left( Q -  \ell \xi + \sqrt{4 - 2 \ell \xi Q + \ell^2\xi^2 } \right)} +o_\zeta(1) .
\eqe
Since $\ell < 2/(\xi Q)$, we have $t_{k(\ell)} < 1/(\xi Q)$ for each small enough $\zeta \in (0,1)$.
By~\eqref{eqn-use-quantum-square-count}, the $\lfloor 2^{\ell m + 1}\rfloor$ squares in $\mcl R_{m}$ with the largest side lengths each belong to $\mcl R_{m ,k}$ for some $k  \leq  k(\ell) $. 
Therefore, we can apply~\eqref{eqn-use-quantum-square-count} to obtain
\eqb \label{eqn-wc-upper-sum}
\sum_{S\in \mcl S\cap \mcl R_m} |S|^x
\leq \sum_{k=1}^{k(\ell)} 2^{- x t_{k-1} m } \# \mcl R_{m ,k}
\leq \sum_{k=1}^{k(\ell)} 2^{ \left(2 t_k -  \frac{(1  - t_k \xi Q)^2}{2\xi^2 t_k} -   x t_k   + o_\zeta(1)  \right)  m }  .
\eqe
\medskip

\noindent\textit{Step 4: bounding the right side of~\eqref{eqn-wc-upper-sum}.}
We want to argue that the right side of~\eqref{eqn-wc-lower-sum} is dominated by the summand for $k = k(\ell)$. 
Indeed, the function 
\eqbn
t \mapsto  2 t -  \frac{(1 - t \xi Q  )^2}{2\xi^2 t} -   x t
\eqen
is increasing on $(0,1/(\xi Q))$.  
Hence, for small enough $\zeta$, the right side of~\eqref{eqn-wc-lower-sum} is bounded above by a $\zeta$-dependent constant times the summand for $k = k(\ell)$.
By applying~\eqref{eqn-wc-upper-sum} and then~\eqref{eqn-t-choice}, we obtain
\eqb \label{eqn-wc-upper-term}
\sum_{S\in \mcl S\cap \mcl R_m} |S|^x
\leq 2^{ \left( \ell  -   x t_{k(\ell)}   + o_\zeta(1)  \right) m  }  
\leq 2^{\left( \ell - \frac{x}{\xi\left( Q - \ell  \xi + \sqrt{4 -  2 \ell \xi Q + \ell^2\xi^2 } \right)} + o_\zeta(1) \right) m} .
\eqe
By our choice of $x$ in~\eqref{eqn-quantum-square-x}, the power of $2^m$ on the right side of~\eqref{eqn-wc-upper-term} is negative for each small enough $\zeta \in (0,1)$. 
Hence for small enough $\zeta$ we can sum~\eqref{eqn-wc-lower-term} over all $m \geq m_0$ and apply~\eqref{eqn-quantum-square-x} to get
\eqb
\sum_{S\in\mcl S} |S|^x
\leq \sum_{m=m_0}^\infty 2^{\left(   \ell - \frac{x}{\xi\left( Q -  \ell\xi  + \sqrt{4 - 2 \ell \xi Q + \ell^2\xi^2 } \right)}     + o_\zeta(1) \right) m  } 
\rta 0 ,\quad \text{as $m_0\rta \infty$} .
\eqe
Therefore,~\eqref{eqn-eucl-diam-sum} holds.
\end{proof}

\bibliography{cibib}

\newcommand{\etalchar}[1]{$^{#1}$}
\def\cprime{$'$}
\begin{thebibliography}{{Gwy}19}

\bibitem[Ang19]{ang-discrete-lfpp}
M.~Ang.
\newblock {Comparison of discrete and continuum Liouville first passage
  percolation}.
\newblock {\em ArXiv e-prints}, Apr 2019, \arxiv{1904.09285}.

\bibitem[Aru15]{aru-kpz}
J.~Aru.
\newblock K{PZ} relation does not hold for the level lines and {SLE$_\kappa$}
  flow lines of the {G}aussian free field.
\newblock {\em Probab. Theory Related Fields}, 163(3-4):465--526, 2015,
  \arxiv{1312.1324}. \MR{3418748}

\bibitem[AS03]{angel-schramm-uipt}
O.~Angel and O.~Schramm.
\newblock Uniform infinite planar triangulations.
\newblock {\em Comm. Math. Phys.}, 241(2-3):191--213, 2003. \MR{2013797
  (2005b:60021)}

\bibitem[{Ber}]{berestycki-lqg-notes}
N.~{Berestycki}.
\newblock {I}ntroduction to the {G}aussian {F}ree {F}ield and {L}iouville
  {Q}uantum {G}ravity.
\newblock {A}vailable at
  \url{https://homepage.univie.ac.at/nathanael.berestycki/articles.html}.

\bibitem[Bet15]{bet-disk-tight}
J.~Bettinelli.
\newblock Scaling limit of random planar quadrangulations with a boundary.
\newblock {\em Ann. Inst. Henri Poincar\'e Probab. Stat.}, 51(2):432--477,
  2015, \arxiv{1111.7227}. \MR{3335010}

\bibitem[BGRV16]{grv-kpz}
N.~Berestycki, C.~Garban, R.~Rhodes, and V.~Vargas.
\newblock K{PZ} formula derived from {L}iouville heat kernel.
\newblock {\em J. Lond. Math. Soc. (2)}, 94(1):186--208, 2016,
  \arxiv{1406.7280}. \MR{3532169}

\bibitem[BJRV13]{bjrv-gmt-duality}
J.~Barral, X.~Jin, R.~Rhodes, and V.~Vargas.
\newblock Gaussian multiplicative chaos and {KPZ} duality.
\newblock {\em Comm. Math. Phys.}, 323(2):451--485, 2013, \arxiv{1202.5296}.
  \MR{3096527}

\bibitem[BS09]{benjamini-schramm-cascades}
I.~Benjamini and O.~Schramm.
\newblock K{PZ} in one dimensional random geometry of multiplicative cascades.
\newblock {\em Comm. Math. Phys.}, 289(2):653--662, 2009, \arxiv{0806.1347}.
  \MR{2506765 (2010c:60151)}

\bibitem[Che17]{chen-fk}
L.~Chen.
\newblock Basic properties of the infinite critical-{FK} random map.
\newblock {\em Ann. Inst. Henri Poincar\'e D}, 4(3):245--271, 2017,
  \arxiv{1502.01013}. \MR{3713017}

\bibitem[DDDF19]{dddf-lfpp}
J.~Ding, J.~Dub{\'e}dat, A.~Dunlap, and H.~Falconet.
\newblock {Tightness of Liouville first passage percolation for $\gamma \in
  (0,2)$}.
\newblock {\em ArXiv e-prints}, Apr 2019, \arxiv{1904.08021}.

\bibitem[DFG{\etalchar{+}}19]{lqg-metric-estimates}
J.~Dub{\'e}dat, H.~Falconet, E.~Gwynne, J.~Pfeffer, and X.~Sun.
\newblock {W}eak {LQG} metrics and {L}iouville first passage percolation.
\newblock {\em ArXiv e-prints}, May 2019, \arxiv{1905.00380}.

\bibitem[DG16]{ding-goswami-watabiki}
J.~{Ding} and S.~{Goswami}.
\newblock {Upper bounds on Liouville first passage percolation and Watabiki's
  prediction}.
\newblock {\em {C}ommunications in {P}ure and {A}pplied {M}athematics}, to
  appear, 2016, \arxiv{1610.09998}.

\bibitem[DG18]{dg-lqg-dim}
J.~{Ding} and E.~{Gwynne}.
\newblock {The fractal dimension of {L}iouville quantum gravity: universality,
  monotonicity, and bounds}.
\newblock {\em {C}ommunications in {M}athematical {P}hysics}, to appear, 2018,
  \arxiv{1807.01072}.

\bibitem[DK88]{duplantier-kwon-brownian}
B.~Duplantier and K.-H. Kwon.
\newblock Conformal invariance and intersections of random walks.
\newblock {\em Phys.\ Rev.\ Lett.}, 61:2514--2517, Nov 1988.

\bibitem[DKRV16]{dkrv-lqg-sphere}
F.~David, A.~Kupiainen, R.~Rhodes, and V.~Vargas.
\newblock Liouville quantum gravity on the {R}iemann sphere.
\newblock {\em Comm. Math. Phys.}, 342(3):869--907, 2016, \arxiv{1410.7318}.
  \MR{3465434}

\bibitem[DMS14]{wedges}
B.~{Duplantier}, J.~{Miller}, and S.~{Sheffield}.
\newblock {Liouville quantum gravity as a mating of trees}.
\newblock {\em ArXiv e-prints}, September 2014, \arxiv{1409.7055}.

\bibitem[DRSV14]{shef-renormalization}
B.~Duplantier, R.~Rhodes, S.~Sheffield, and V.~Vargas.
\newblock Renormalization of critical {G}aussian multiplicative chaos and {KPZ}
  relation.
\newblock {\em Comm. Math. Phys.}, 330(1):283--330, 2014, \arxiv{1212.0529}.
  \MR{3215583}

\bibitem[DS11]{shef-kpz}
B.~Duplantier and S.~Sheffield.
\newblock Liouville quantum gravity and {KPZ}.
\newblock {\em Invent. Math.}, 185(2):333--393, 2011, \arxiv{1206.0212}.
  \MR{2819163 (2012f:81251)}

\bibitem[DZZ18]{dzz-heat-kernel}
J.~{Ding}, O.~{Zeitouni}, and F.~{Zhang}.
\newblock {Heat kernel for Liouville Brownian motion and Liouville graph
  distance}.
\newblock {\em {C}ommunications in {M}athematical {P}hysics}, to appear, 2018,
  \arxiv{1807.00422}.

\bibitem[GH18]{gh-displacement}
E.~{Gwynne} and T.~{Hutchcroft}.
\newblock {Anomalous diffusion of random walk on random planar maps}.
\newblock {\em ArXiv e-prints}, July 2018, \arxiv{1807.01512}.

\bibitem[GHM15]{ghm-kpz}
E.~{Gwynne}, N.~{Holden}, and J.~{Miller}.
\newblock {An almost sure KPZ relation for SLE and Brownian motion}.
\newblock {\em {A}nnals of {P}robability}, to appear, 2015, \arxiv{1512.01223}.

\bibitem[GHS17]{ghs-map-dist}
E.~{Gwynne}, N.~{Holden}, and X.~{Sun}.
\newblock {A mating-of-trees approach for graph distances in random planar
  maps}.
\newblock {\em ArXiv e-prints}, November 2017, \arxiv{1711.00723}.

\bibitem[GHS19]{ghs-dist-exponent}
E.~{Gwynne}, N.~{Holden}, and X.~{Sun}.
\newblock {A distance exponent for Liouville quantum gravity}.
\newblock {\em {Probability Theory and Related Fields}}, 173(3):931--997, 2019,
  \arxiv{1606.01214}.

\bibitem[GM17a]{gwynne-miller-char}
E.~{Gwynne} and J.~{Miller}.
\newblock {Characterizations of SLE$_{\kappa}$ for $\kappa \in (4,8)$ on
  Liouville quantum gravity}.
\newblock {\em ArXiv e-prints}, January 2017, \arxiv{1701.05174}.

\bibitem[GM17b]{gm-spec-dim}
E.~{Gwynne} and J.~{Miller}.
\newblock {Random walk on random planar maps: spectral dimension, resistance,
  and displacement}.
\newblock {\em ArXiv e-prints}, November 2017, \arxiv{1711.00836}.

\bibitem[GM19a]{gm-confluence}
E.~Gwynne and J.~Miller.
\newblock Confluence of geodesics in {L}iouville quantum gravity for {$\gamma
  \in (0,2)$}.
\newblock {\em {A}nnals of {P}robability}, to appear, 2019, \arxiv{1905.00381}.

\bibitem[GM19b]{gm-coord-change}
E.~Gwynne and J.~Miller.
\newblock Conformal covariance of the {L}iouville quantum gravity metric for
  {$\gamma \in (0,2)$}.
\newblock {\em ArXiv e-prints}, May 2019, \arxiv{1905.00384}.

\bibitem[GM19c]{gm-uniqueness}
E.~Gwynne and J.~Miller.
\newblock Existence and uniqueness of the {L}iouville quantum gravity metric
  for {$\gamma \in (0,2)$}.
\newblock {\em ArXiv e-prints}, May 2019, \arxiv{1905.00383}.

\bibitem[GM19d]{local-metrics}
E.~Gwynne and J.~Miller.
\newblock Local metrics of the {G}aussian free field.
\newblock {\em ArXiv e-prints}, May 2019, \arxiv{1905.00379}.

\bibitem[GMS17]{gms-tutte}
E.~{Gwynne}, J.~{Miller}, and S.~{Sheffield}.
\newblock {The Tutte embedding of the mated-CRT map converges to Liouville
  quantum gravity}.
\newblock {\em ArXiv e-prints}, May 2017, \arxiv{1705.11161}.

\bibitem[GP19]{gp-lfpp-bounds}
E.~{Gwynne} and J.~{Pfeffer}.
\newblock {Bounds for distances and geodesic dimension in Liouville first
  passage percolation}.
\newblock {\em {E}lectronic {C}ommunications in {P}robability}, 24:no. 56, 12,
  2019, \arxiv{1903.09561}.

\bibitem[GRV16]{grv-higher-genus}
C.~{Guillarmou}, R.~{Rhodes}, and V.~{Vargas}.
\newblock {Polyakov's formulation of $2d$ bosonic string theory}.
\newblock {\em ArXiv e-prints}, July 2016, \arxiv{1607.08467}.

\bibitem[{Gwy}19]{gwynne-ball-bdy}
E.~{Gwynne}.
\newblock {The dimension of the boundary of a Liouville quantum gravity metric
  ball}.
\newblock {\em arXiv e-prints}, Sep 2019, \arxiv{1909.08588}.

\bibitem[HMP10]{hmp-thick-pts}
X.~Hu, J.~Miller, and Y.~Peres.
\newblock Thick points of the {G}aussian free field.
\newblock {\em Ann. Probab.}, 38(2):896--926, 2010, \arxiv{0902.3842}.
  \MR{2642894 (2011c:60117)}

\bibitem[Kah85]{kahane}
J.-P. Kahane.
\newblock Sur le chaos multiplicatif.
\newblock {\em Ann. Sci. Math. Qu\'ebec}, 9(2):105--150, 1985. \MR{829798
  (88h:60099a)}

\bibitem[KPZ88]{kpz-scaling}
V.~Knizhnik, A.~Polyakov, and A.~Zamolodchikov.
\newblock {Fractal structure of 2D-quantum gravity}.
\newblock {\em {Modern Phys. Lett A}}, 3(8):819--826, 1988.

\bibitem[{Le }13]{legall-uniqueness}
J.-F. {Le Gall}.
\newblock Uniqueness and universality of the {B}rownian map.
\newblock {\em Ann. Probab.}, 41(4):2880--2960, 2013, \arxiv{1105.4842}.
  \MR{3112934}

\bibitem[LG19]{legall-disk-snake}
J.-F. Le~Gall.
\newblock Brownian disks and the {B}rownian snake.
\newblock {\em Ann. Inst. Henri Poincar\'{e} Probab. Stat.}, 55(1):237--313,
  2019, \arxiv{1704.08987}. \MR{3901647}

\bibitem[LSW01a]{lsw-bm-exponents1}
G.~F. Lawler, O.~Schramm, and W.~Werner.
\newblock Values of {B}rownian intersection exponents. {I}. {H}alf-plane
  exponents.
\newblock {\em Acta Math.}, 187(2):237--273, 2001, \arxiv{math/0003156}.
  \MR{1879850 (2002m:60159a)}

\bibitem[LSW01b]{lsw-bm-exponents2}
G.~F. Lawler, O.~Schramm, and W.~Werner.
\newblock Values of {B}rownian intersection exponents. {II}. {P}lane exponents.
\newblock {\em Acta Math.}, 187(2):275--308, 2001, \arxiv{math/9911084}.
  \MR{1879851 (2002m:60159b)}

\bibitem[LSW02]{lsw-bm-exponents3}
G.~F. Lawler, O.~Schramm, and W.~Werner.
\newblock Values of {B}rownian intersection exponents. {III}. {T}wo-sided
  exponents.
\newblock {\em Ann. Inst. H. Poincar\'e Probab. Statist.}, 38(1):109--123,
  2002. \MR{1899232 (2003d:60163)}

\bibitem[Mie13]{miermont-brownian-map}
G.~Miermont.
\newblock The {B}rownian map is the scaling limit of uniform random plane
  quadrangulations.
\newblock {\em Acta Math.}, 210(2):319--401, 2013, \arxiv{1104.1606}.
  \MR{3070569}

\bibitem[MQ18]{mq-geodesics}
J.~{Miller} and W.~{Qian}.
\newblock {The geodesics in Liouville quantum gravity are not Schramm-Loewner
  evolutions}.
\newblock {\em ArXiv e-prints}, December 2018, \arxiv{1812.03913}.

\bibitem[MS15]{lqg-tbm1}
J.~{Miller} and S.~{Sheffield}.
\newblock {Liouville quantum gravity and the Brownian map I: The QLE(8/3,0)
  metric}.
\newblock {\em Inventiones Mathematicae}, to appear, 2015, \arxiv{1507.00719}.

\bibitem[MS16a]{lqg-tbm2}
J.~{Miller} and S.~{Sheffield}.
\newblock {Liouville quantum gravity and the Brownian map II: geodesics and
  continuity of the embedding}.
\newblock {\em ArXiv e-prints}, May 2016, \arxiv{1605.03563}.

\bibitem[MS16b]{lqg-tbm3}
J.~{Miller} and S.~{Sheffield}.
\newblock {Liouville quantum gravity and the Brownian map III: the conformal
  structure is determined}.
\newblock {\em ArXiv e-prints}, August 2016, \arxiv{1608.05391}.

\bibitem[MS16c]{ig1}
J.~Miller and S.~Sheffield.
\newblock Imaginary geometry {I}: interacting {SLE}s.
\newblock {\em Probab. Theory Related Fields}, 164(3-4):553--705, 2016,
  \arxiv{1201.1496}. \MR{3477777}

\bibitem[MS16d]{qle}
J.~Miller and S.~Sheffield.
\newblock Quantum {L}oewner evolution.
\newblock {\em Duke Math. J.}, 165(17):3241--3378, 2016, \arxiv{1312.5745}.
  \MR{3572845}

\bibitem[MS17]{ig4}
J.~Miller and S.~Sheffield.
\newblock Imaginary geometry {IV}: interior rays, whole-plane reversibility,
  and space-filling trees.
\newblock {\em Probab. Theory Related Fields}, 169(3-4):729--869, 2017,
  \arxiv{1302.4738}. \MR{3719057}

\bibitem[Pol81a]{polyakov-qg1}
A.~M. Polyakov.
\newblock Quantum geometry of bosonic strings.
\newblock {\em Phys. Lett. B}, 103(3):207--210, 1981. \MR{623209 (84h:81093a)}

\bibitem[Pol81b]{polyakov-qg2}
A.~M. Polyakov.
\newblock Quantum geometry of fermionic strings.
\newblock {\em Phys. Lett. B}, 103(3):211--213, 1981. \MR{623210 (84h:81093b)}

\bibitem[RV11]{rhodes-vargas-log-kpz}
R.~Rhodes and V.~Vargas.
\newblock K{PZ} formula for log-infinitely divisible multifractal random
  measures.
\newblock {\em ESAIM Probab. Stat.}, 15:358--371, 2011, \arxiv{0807.1036}.
  \MR{2870520}

\bibitem[RV14]{rhodes-vargas-review}
R.~Rhodes and V.~Vargas.
\newblock Gaussian multiplicative chaos and applications: {A} review.
\newblock {\em Probab. Surv.}, 11:315--392, 2014, \arxiv{1305.6221}.
  \MR{3274356}

\bibitem[She07]{shef-gff}
S.~Sheffield.
\newblock Gaussian free fields for mathematicians.
\newblock {\em Probab. Theory Related Fields}, 139(3-4):521--541, 2007,
  \arxiv{math/0312099}. \MR{2322706 (2008d:60120)}

\bibitem[SS13]{ss-contour}
O.~Schramm and S.~Sheffield.
\newblock A contour line of the continuum {G}aussian free field.
\newblock {\em Probab. Theory Related Fields}, 157(1-2):47--80, 2013,
  \arxiv{math/0605337}. \MR{3101840}

\bibitem[Wat93]{watabiki-lqg}
Y.~Watabiki.
\newblock {Analytic study of fractal structure of quantized surface in
  two-dimensional quantum gravity}.
\newblock {\em Progr. Theor. Phys. Suppl.}, (114):1--17, 1993.
\newblock Quantum gravity (Kyoto, 1992).

\end{thebibliography}
\bibliographystyle{hmralphaabbrv}

\end{document}